\def\disp{\displaystyle}
\def\dref#1{(\ref{#1})}
\theoremstyle{plain}
\newtheorem{theorem}{Theorem}[section]
\newtheorem{lemma}{Lemma}[section]
\newtheorem{proposition}{Proposition}[section]
\theoremstyle{definition}
\newtheorem{definition}{Definition}[section]
\numberwithin{equation}{section}
\begin{document}

\title{\bf Global existence,
smooth and stabilization  in a   three-dimensional
Keller-Segel-Navier-Stokes system with  rotational flux
}

\author{
Jiashan Zheng\thanks{Corresponding author.   E-mail address:
 zhengjiashan2008@163.com (J.Zheng)}
 \\
    School of Mathematics and Statistics Science,\\
     Ludong University, Yantai 264025,  P.R.China \\
}
\date{}


\maketitle \vspace{0.3cm}
\noindent
\begin{abstract}
We consider the spatially $3$-D version of the following Keller-Segel-Navier-Stokes system with  rotational flux
$$
 \left\{
 \begin{array}{l}
   n_t+u\cdot\nabla n=\Delta n-\nabla\cdot(nS(x,n,c)\nabla c),\quad
x\in \Omega, t>0,\\
    c_t+u\cdot\nabla c=\Delta c-c+n,\quad
x\in \Omega, t>0,\\
u_t+\kappa(u \cdot \nabla)u+\nabla P=\Delta u+n\nabla \phi,\quad
x\in \Omega, t>0,\\
\nabla\cdot u=0,\quad
x\in \Omega, t>0\\
 \end{array}\right.\eqno(*)
 $$
 under no-flux boundary conditions in a bounded domain $\Omega\subseteq \mathbb{R}^{3}$ with smooth boundary, where $\phi\in W^{2,\infty} (\Omega)$ and
 $\kappa\in \mathbb{R}$ represent the prescribed gravitational potential and    the strength of nonlinear fluid
convection, respectively.   Here
   the matrix-valued function $S(x,n,c)\in C^2(\bar{\Omega}\times[0,\infty)^2 ;\mathbb{R}^{3\times 3})$ denotes the rotational effect which
satisfies $|S(x,n,c)|\leq C_S(1 + n)^{-\alpha}$ with some $C_S > 0$ and $\alpha\geq 0$.
Compared with the signal
consumption case as in  chemotaxis-(Navier-)Stokes system, the quantity $c$ of system $(*)$ is no longer a priori bounded
by its initial norm in $L^\infty$, which means that we have less regularity information on $c$. Moreover, the tensor-valued sensitivity functions result in new mathematical difficulties, mainly linked to the fact that a chemotaxis system with such rotational fluxes
thereby loses an energy-like structure.
 In this paper,
by seeking some new
functionals and using the bootstrap arguments on  system $(*)$, we establish the existence  of
global  weak solutions to  system $(*)$ for arbitrarily large initial data under the assumption $\alpha\geq1$. Moreover, under an explicit condition on the size of $C_S$ relative to $C_N$, we can  secondly prove that in fact any such {\bf weak} solution $(n,c,u)$ becomes
smooth ultimately, and that it approaches the unique spatially homogeneous steady
state $(\bar{n}_0,\bar{n}_0,0)$, where $\bar{n}_0=\frac{1}{|\Omega|}\int_{\Omega}n_0$ and $C_N$ is the best  Poincar\'{e} constant. To the best of our knowledge, there are the first results
on  asymptotic behavior of the system.

\end{abstract}

\vspace{0.3cm}
\noindent {\bf\em Key words:}~
Navier-Stokes system; Keller-Segel model; Stabilization;
Global existence;  
Tensor-valued
sensitivity

\noindent {\bf\em 2010 Mathematics Subject Classification}:~ 35K55, 35Q92, 35Q35, 92C17

\newpage
\section{Introduction}
In this paper, we investigate the global existence and stabilization of the  solutions to the following
 Keller-Segel-Navier-Stokes system with  rotational flux
\begin{equation}
 \left\{\begin{array}{ll}
   n_t+u\cdot\nabla n=\Delta n-\nabla\cdot(nS(x,n,c)\nabla c),\quad
x\in \Omega, t>0,\\
    c_t+u\cdot\nabla c=\Delta c-c+n,\quad
x\in \Omega, t>0,\\
u_t+\kappa(u \cdot \nabla)u+\nabla P=\Delta u+n\nabla \phi,\quad
x\in \Omega, t>0,\\
\nabla\cdot u=0,\quad
x\in \Omega, t>0\\
 \end{array}\right.\label{33dfff4451.1fghyuisda}
\end{equation}
in a physical smoothly bounded domain $\Omega\subseteq R^3$, under zero-flux boundary conditions
\begin{equation}\disp{(\nabla n-nS(x, n, c)\nabla c)\cdot\nu=\nabla c\cdot\nu=0,u=0,}\quad
x\in \partial\Omega, t>0\label{33dfff44sdfff51.1fghfffyuisda}
\end{equation}
and with prescribed initial data
\begin{equation}\disp{n(x,0)=n_0(x),c(x,0)=c_0(x),u(x,0)=u_0(x),}\quad
x\in \Omega,\label{33dfff44sdfff51.1fgsdddfhyuisda}
\end{equation}
where $n,c,u,P,\phi$, respectively  denote the bacterial density, chemical concentration, fluid velocity, the associated pressure and the prescribed gravitational potential.
 This system was initially proposed by Wang-Winkler-Xiang (\cite{Wddffang11215,Wangss21215}) to describe
chemotaxis-fluid interaction in cases when the evolution of the chemoattractant is essentially
dominated by production through cells.

Before going into our mathematical analysis, we recall some important progresses on system \dref{33dfff4451.1fghyuisda} and its variants.
When $u\equiv 0,$ the PDE system \dref{33dfff4451.1fghyuisda} is reduced to the Keller-Segel system
\begin{equation}
 \left\{\begin{array}{ll}
 n_t=\Delta n-\nabla\cdot(nS(x,n,c)\nabla c),
 \quad
x\in \Omega,~ t>0,\\
 \disp{ c_t=\Delta c- c +n,}\quad
x\in \Omega, ~t>0,\\
 \end{array}\right.\label{ssd7223dssddsffgsdddddff4ssd4101.2x1sddd6677}
\end{equation}
where the tensor-valued function (or the scalar function)
 $S$
 measures the chemotactic sensitivity and satisfies
 \begin{equation}\label{x1.73142vghf48rtgyhu}
S\in C^2(\bar{\Omega}\times[0,\infty)^2;\mathbb{R}^{N\times N})
 \end{equation}
 and
 \begin{equation}\label{x1.73142vghf48gg}|S(x, n, c)|\leq C_S(1 + n)^{-\alpha} ~~~~\mbox{for all}~~ (x, n, c)\in\Omega\times [0,\infty)^2
 \end{equation}
with some $C_S > 0$ and $\alpha\geq 0$. Here $N$ denotes   the space dimension.
The global existence and finite time
blow-up 
of system \dref{ssd7223dssddsffgsdddddff4ssd4101.2x1sddd6677} have been widely investigated in the past decades. In fact, in the case $S(x,n,c) = 1$, solutions of \dref{ssd7223dssddsffgsdddddff4ssd4101.2x1sddd6677} may blow up in finite time when
$N\geq 2$ (\cite{Herreroddd,Winkler793}). On the other hand, such explosion phenomena can be ruled out when $S(x,n,c):= S(n)$ is related
to the prototypical assumption of volume-filling effect (\cite{Horstmann791,Winkler79}).
More precisely, 
$S(x, n, c):= (1+n)^{-\alpha}$ is a
scalar function, when
 %
$\alpha > 1-\frac{2}{N}$,
 all solutions are global and uniformly
bounded
(\cite{Horstmann791});  while  if
$\Omega\subseteq \mathbb{R}^N (N\geq 2)$ is a ball and $\alpha< 1-\frac{2}{N}$, the solution may blow up
under some technical assumptions (\cite{Horstmann791,Winkler79}).
Quite a number
of results
have been investigated, including the system with the logistic terms (see \cite{Zhengddfggghjjkk1,Wang76,Winkler21215,Tello710},
for instance),  the system with nonlinear  diffusion (see \cite{Cie72,Kowalczyk7101,Zhengssdefr23,Zheng00}) and so on. We refer to \cite{Bellomo1216,Hillen} for the further
reading.

When $n = c=0$, the PDE system
\dref{33dfff4451.1fghyuisda} becomes the Navier-Stokes system
\begin{equation}
 \left\{\begin{array}{ll}
u_t+\kappa(u \cdot \nabla)u+\nabla P=\Delta u,\quad
x\in \Omega, t>0,\\
\nabla\cdot u=0,\quad
x\in \Omega, t>0.\\
 \end{array}\right.\label{33dfff4451.1fghyufgggisda}
\end{equation}
As we all know that the corresponding Navier-Stokes system \dref{33dfff4451.1fghyufgggisda} does not admit a satisfactory solution
theory up to now. In fact,  global weak solvability was studied in \cite{LerayLerayer792,Sohr},  some local
existence of smooth solutions are proved by \cite{FujitaFujitahanggssssssdefr23}  and \cite{Wiegnerdd79}.

In various situations, however, the migration of bacteria may be more complex
because it can be effected by the changes in their living environment (see \cite{Dombrowskio12186} and \cite{Lorz1215}). Considering that in nature some bacteria, like Bacillus subtilis and
Escherichia coli, often live in a viscous fluid, Tuval and his cooperator (\cite{Tuval1215}) described the above biological phenomena and
proposed the mathematical model consisting of oxygen diffusion and {\bf consumption},
chemotaxis, and fluid dynamics. In fact, Tuval et al.  (\cite{Tuval1215})
proposed the following chemotaxis-(Navier-)Stokes system in the context of signal {\bf consumption} by cells
\begin{equation}
 \left\{\begin{array}{ll}
   n_t+u\cdot\nabla n=\Delta n-\nabla\cdot( nS(n)\nabla c),\quad
x\in \Omega, t>0,\\
    c_t+u\cdot\nabla c=\Delta c-nf(c),\quad
x\in \Omega, t>0,\\
u_t+\kappa (u\cdot\nabla)u+\nabla P=\Delta u+n\nabla \phi,\quad
x\in \Omega, t>0,\\
\nabla\cdot u=0,\quad
x\in \Omega, t>0,\\
 \end{array}\right.\label{1.ss1hdffffhjjddssssghhhhhddggddddffftyy}
\end{equation}
where  $f(c)$ and $\kappa$ represent   the consumption rate
of the bacteria and the strength of nonlinear fluid
convection, respectively. In particular, when the fluid flow is slow, we can use the Stokes equation instead of
the Navier-Stokes one, i.e., $\kappa = 0$ (see \cite{Lorz1215,Francesco12186}). The chemotaxis fluid system has been studied in the last few years.
In fact, by making use of the energy-type functionals,
system \dref{1.ss1hdffffhjjddssssghhhhhddggddddffftyy} and its variants have attracted extensive attention (see, e.g., Chae et al. \cite{Chaexdd12176},
Duan et al. \cite{Duan12186}, Liu and Lorz \cite{Liu1215,Lorz1215},
Tao and Winkler \cite{Tao41215,Winkler31215,Winkler61215,Winkler51215},
Zhang and Zheng \cite{Zhang12176}, and references therein). For example, the well-posedness and stabilization in a two-dimensional bounded
domain have been investigated in \cite{Winkler31215} and \cite{Winkler61215}, respectively.  In \cite{Winkler61215},
Winkler showed that in bounded {\bf convex} domains $\Omega\subseteq R^2$, the global classical solutions
obtained in \cite{Winkler31215} stabilize to the spatially uniform equilibrium $(\frac{1}{|\Omega|}\int_\Omega n_0 ,0,0)$
as $t\rightarrow\infty$. For $\Omega\subseteq R^3$,  the global existence of
weak solution for system \dref{1.ss1hdffffhjjddssssghhhhhddggddddffftyy}
has been proved in \cite{Winkler51215}.
Recently,  the eventual smoothness of weak
solutions in the three-dimensional case  has also been investigated by Winkler in \cite{Winklerpejoevsssdd793}.
For more literature related to this model, we can refer to   \cite{Tao61215,Tao71215,Winkler11215,Winklerssdff51215,LiLiLigZhanggssssssdefr23} and the references therein.

Compared with the chemotaxis system and the chemotaxis-(Navier-)Stokes system, the  Keller-Segel-Navier-Stokes system
 \dref{33dfff4451.1fghyuisda} is much less understood. In fact, compared with the {\bf signal
consumption case} as in  chemotaxis-(Navier-)Stokes system \dref{1.ss1hddffhjjdllddddssssddggddddffftyy}, the quantity $c$ of system \dref{33dfff4451.1fghyuisda} is no longer a priori bounded
by its initial norm in $L^\infty$, which means that we have less regularity information on $c$.
  Now, let us
briefly recapitulate the recent developments for system
 \dref{33dfff4451.1fghyuisda}. To this end, 
in 2018,
Wang-Winkler-Xiang (\cite{Wddffang11215})
showed  that when the tensor-valued sensitivity
$S$ satisfying \dref{x1.73142vghf48rtgyhu} and \dref{x1.73142vghf48gg} with $\alpha > 0$ and $\Omega\subseteq \mathbb{R}^2$ is a bounded {\bf convex} domain with smooth boundary,
 system \dref{33dfff4451.1fghyuisda}
admits  a global-in-time classical and bounded solution. By deriving a new type of entropy-energy estimate, Zheng (\cite{Zhenddsdddddgssddsddfff00})
extends the result of \cite{Wddffang11215} to general bounded domains.
In comparison to the result
for the corresponding fluid-free system, it is easy to see that the restriction on $\alpha$ here is
optimal (see \cite{Zhenddsdddddgssddsddfff00,Wddffang11215}).
 In the
case of $N = 3$  and tensor-valued $S(x,n,c)$ satisfying \dref{x1.73142vghf48rtgyhu} and \dref{x1.73142vghf48gg}, global
weak solutions were shown to exist for $\alpha\geq\frac{3}{7}$
(\cite{LiuZhLiuLiuandddgddff4556}).
 Recently, Zheng and Ke  (\cite{Kegssddsddfff00}) presented the existence of global and weak solutions for   Keller-Segel-Navier-Stokes system \dref{33dfff4451.1fghyuisda} under the assumption that $S$ satisfies
\dref{x1.73142vghf48rtgyhu} and  \dref{x1.73142vghf48gg} with $\alpha > \frac{1}{3}$,
which, in light of the known results for the fluid-free system, is an optimal restriction on $\alpha$.
 For more works
about the chemotaxis models with tensor sensitivity (and nonlinear  diffusion), we refer readers to see \cite{Zhengsddfffsdddssddddkkllssssssssdefr23,Winkler444ssdff51215,Zhenddddgssddsddfff00,BlackBlacsskBlacksdff51215,Peng55667,Liuddfffff,LiLiBlacsskBlacksdff51215,Zhengsddfffsdddssddddkkllssssssssdefr23}.

Experiments (see \cite{XueXuejjainidd793,Xusddeddff345511215}) find that the oriented migration of bacteria
or cells may not be parallel to the gradient of the chemical substance, but may rather involve
rotational flux components.
 This suggests that the sensitivity function $S(n)$
should be replaced by a matrix $S(x,n,c)$. As a consequence, chemotaxis systems with such rotational flux lose their energy structure, which plays a key role
in the mathematical analysis.
Thus, a generalization of the chemotaxis-fluid system \dref{1.ss1hdffffhjjddssssghhhhhddggddddffftyy}
should be of the form
\begin{equation}
 \left\{\begin{array}{ll}
   n_t+u\cdot\nabla n=\Delta n-\nabla\cdot( nS(x,n,c)\nabla c),\quad
x\in \Omega, t>0,\\
    c_t+u\cdot\nabla c=\Delta c-nf(c),\quad
x\in \Omega, t>0,\\
u_t+\kappa (u\cdot\nabla)u+\nabla P=\Delta u+n\nabla \phi,\quad
x\in \Omega, t>0,\\
\nabla\cdot u=0,\quad
x\in \Omega, t>0,\\
 \end{array}\right.\label{1.ss1hddffhjjdllddddssssddggddddffftyy}
\end{equation}
where $S(x,n,c)$ is a tensor-valued function, which indicates the rotational effect.
This generalization results in considerable mathematical difficulties
due to the fact that chemotaxis systems \dref{1.ss1hddffhjjdllddddssssddggddddffftyy} with such rotational fluxes lose some energy structure,
which has served as a key to the analysis for scalar-valued $S$ (see Winkler  et al. \cite{Wddffang11215,Winklesssssrdddsss51215,Winklerssddddrssdff51215} and Cao-Lankeit \cite{CaoCaoLiitffg11}, Zheng \cite{Zhengffssdfffcksdff51215007}).  Consequently, some new
approaches are necessary when studying the global existence or even boundedness (see \cite{Winklesssssrdddsss51215}).
Quite a number
of results on global existence and boundedness properties have also been obtained for the variant of
\dref{1.ss1hddffhjjdllddddssssddggddddffftyy} obtained on replacing $\Delta n$ by nonlinear diffusion operators generalizing the porous medium-type
choice  $\Delta n^m$ for several ranges of $m>1$ (see e.g. \cite{Winklerssdff51215,Winkler11215,Tao71215,Tao61215}).



Motivated by the above works,   the first purpose of this paper is to show the global solvability   of
weak  solutions to the problem \dref{33dfff4451.1fghyuisda}.

Throughout this paper, we assume $\phi$ and the initial data satisfy
%
\begin{equation}
\phi\in W^{2,\infty}(\Omega)
\label{dd1.1fghyuisdakkkllljjjkk}
\end{equation}
 and 
\begin{equation}\label{ccvvx1.731426677gg}
\left\{
\begin{array}{ll}
\displaystyle{n_0\in C(\bar{\Omega})~~~~ \mbox{with}~~ n_0\geq0,},\\
\displaystyle{c_0\in W^{1,\infty}(\Omega)~~\mbox{with}~~c_0\geq0~~\mbox{in}~~\bar{\Omega},}\\
\displaystyle{u_0\in D(A^\gamma)~~\mbox{for~~ some}~~\gamma\in ( \frac{3}{4}, 1),}\\
\end{array}
\right.
\end{equation}
where $A$ denotes the Stokes operator with domain $D(A) := W^{2,{2}}(\Omega)\cap  W^{1,{2}}_0(\Omega)
\cap L^{2}_{\sigma}(\Omega)$,
and
$L^{2}_{\sigma}(\Omega) := \{\varphi\in  L^{2}(\Omega)|\nabla\cdot\varphi = 0\}$ 
 (\cite{Sohr}).

In the context of these assumptions, the first of our main results  can be read as follows.
%
\begin{theorem}\label{theorem3}
Let $\Omega \subseteq \mathbb{R}^3$ be a bounded  domain with smooth boundary.
Suppose that the assumptions \dref{x1.73142vghf48rtgyhu}--\dref{x1.73142vghf48gg} and
 \dref{dd1.1fghyuisdakkkllljjjkk}--\dref{ccvvx1.731426677gg}
 hold.
 If 
\begin{equation}\label{x1.73142vghf48}\alpha\geq1,
\end{equation}
then   problem \dref{33dfff4451.1fghyuisda}--\dref{33dfff44sdfff51.1fgsdddfhyuisda} possesses at least one
global weak solution $(n, c, u)$ in terms of Definition \ref{df1} below. Furthermore, this
solution satisfies
\begin{equation}
 \left\{\begin{array}{ll}
 n\in L^{\infty}_{loc}([0,\infty),L^2(\Omega))\cap L^{2}_{loc}([0,\infty),W^{1,2}(\Omega)),\\
  c\in  L^{\infty}_{loc}([0,\infty),L^p(\Omega))\cap L^{2}_{loc}([0,\infty),W^{1,2}(\Omega))~~~\mbox{for any}~~p>1,\\
  u\in  L^{2}_{loc}([0,\infty),W^{1,2}_{0,\sigma}(\Omega))\cap L^{\infty}_{loc}((0,\infty),L^{2}_\sigma(\Omega))\cap L^{\frac{10}{3}}_{loc}(\bar{\Omega}\times[0,\infty))\\
   \end{array}\right.\label{1.1ddfghyuisdsdddda}
\end{equation}
and there exists $C > 0$ such that
\begin{equation}
\|n(\cdot, t)\|_{L^{2\alpha}(\Omega)}+\|c(\cdot, t)\|_{L^{p}(\Omega)}+\| u(\cdot, t)\|_{L^{2}(\Omega)}\leq C~~ \mbox{for all}~~ t>0.~~
\label{1.163072xgssddgttyyu}
\end{equation}
Moreover, $(n, c, u)$ can be obtained as the limit of solutions $(n_\varepsilon, c_\varepsilon, u_\varepsilon, P_\varepsilon)$ to the regularized problems \dref{1.1fghyuisda} below in the sense that there exists $(\varepsilon_j)_{j\in \mathbb{N}}\subset (0, 1)$ such that $\varepsilon_j\searrow 0$ as $j\rightarrow\infty$  and
$$n_\varepsilon\rightarrow n~~\mbox{as well as}~~ c_\varepsilon\rightarrow c ~~\mbox{and}~~~ u_\varepsilon\rightarrow u~~\mbox{a.e.}~~ \mbox{in}~~ \Omega\times (0,\infty)$$
as $\varepsilon =\varepsilon_j\searrow0.$
\end{theorem}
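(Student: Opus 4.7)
The argument follows a standard three-step scheme for chemotaxis-fluid systems: first, construct a family of regularized smooth problems approximating \dref{33dfff4451.1fghyuisda} (e.g.\ by truncating the sensitivity at height $\varepsilon^{-1}$ and applying Yosida's approximation to the convective Navier--Stokes nonlinearity); second, derive a priori estimates uniform in $\varepsilon$; third, extract a subsequence via Aubin--Lions compactness and identify its limit as a weak solution. Steps one and three are routine in view of the existing literature; the heart of the proof lies in the a priori estimates, where the restriction $\alpha \geq 1$ enters decisively.

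The key estimate is obtained by multiplying the first equation of \dref{33dfff4451.1fghyuisda} by $n^{2\alpha - 1}$. Since $u$ is divergence-free, the convective term vanishes, and integration by parts combined with Young's inequality and the tensor bound $|S|\leq C_S(1+n)^{-\alpha}$ yields
\begin{equation*}
\frac{1}{2\alpha}\frac{d}{dt}\|n\|_{2\alpha}^{2\alpha} + \frac{2\alpha - 1}{2}\int_\Omega n^{2\alpha - 2}|\nabla n|^2 \leq \frac{(2\alpha - 1)C_S^2}{2}\int_\Omega n^{2\alpha}(1+n)^{-2\alpha}|\nabla c|^2.
\end{equation*}
The coupling between the test-exponent $p = 2\alpha$ and the decay rate $\alpha$ of $S$ is crucial: for $\alpha \geq 1$, the pointwise inequality $n^{2\alpha}(1+n)^{-2\alpha}\leq 1$ reduces the right-hand side to $C\|\nabla c\|_2^2$. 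Testing the $c$-equation by $c$ (using $\nabla\cdot u = 0$) then produces a differential inequality for $\|c\|_2^2$ whose forcing is $\|n\|_2^2 \leq C\|n\|_{2\alpha}^2$ (by H\"older and $\alpha \geq 1$). A suitably weighted sum of the two inequalities, combined with a Gagliardo--Nirenberg-type absorption using mass conservation $\|n\|_1 \equiv \|n_0\|_1$, produces a Gronwall-type estimate and yields $\sup_{t \geq 0}(\|n(\cdot,t)\|_{2\alpha} + \|c(\cdot,t)\|_2) \leq C$. Standard $L^p$ heat-semigroup theory applied to $c_t - \Delta c + c = n - \nabla\cdot(uc)$ then bootstraps the $c$-bound to every $L^p(\Omega)$, $p < \infty$.

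For the fluid, I would test the momentum equation by $u$ to obtain $\tfrac{1}{2}\tfrac{d}{dt}\|u\|_2^2 + \|\nabla u\|_2^2 = \int_\Omega n\nabla\phi\cdot u$, then bound the source by $\|\nabla\phi\|_\infty\|n\|_{6/5}\|u\|_6$ and absorb $\|u\|_6$ into $\|\nabla u\|_2^2$ via the 3D embedding $W^{1,2}_0\hookrightarrow L^6$; since $\|n\|_{6/5}\leq C\|n\|_{2\alpha}\leq C$ for $\alpha \geq 1$, this gives uniform bounds $u\in L^\infty_{loc}([0,\infty);L^2_\sigma)\cap L^2_{loc}([0,\infty);W^{1,2}_{0,\sigma})$, which Gagliardo--Nirenberg lifts to $u\in L^{10/3}_{loc}(\bar\Omega\times[0,\infty))$. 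A further Poincar\'e argument in the energy inequality upgrades the $L^2$ bound on $u$ to hold uniformly in $t$. A second look at the $n$-equation, now with $u\in L^{10/3}_{loc}$ and $c\in L^\infty(L^p)$ at our disposal, supplies the missing $n\in L^2_{loc}([0,\infty);W^{1,2}(\Omega))$.

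The principal obstacle is the passage to the limit in the cross-diffusion flux $n_\varepsilon S(x,n_\varepsilon, c_\varepsilon)\nabla c_\varepsilon$. After deriving $\varepsilon$-uniform bounds for $\partial_t n_\varepsilon$, $\partial_t c_\varepsilon$ and $\partial_t u_\varepsilon$ in appropriate negative-order Sobolev spaces, Aubin--Lions yields a subsequence $\varepsilon_j\searrow 0$ along which $(n_{\varepsilon_j}, c_{\varepsilon_j}, u_{\varepsilon_j})$ converges a.e.\ in $\Omega\times(0,\infty)$ and strongly in $L^2_{loc}$. The pointwise bound $|n_\varepsilon S(x, n_\varepsilon, c_\varepsilon)|\leq C_S n_\varepsilon (1+n_\varepsilon)^{-\alpha}\leq C_S$, valid precisely because $\alpha \geq 1$, together with the a.e.\ convergence and a Vitali-type argument delivers strong $L^q_{loc}$ convergence of $n_{\varepsilon_j}S(x, n_{\varepsilon_j}, c_{\varepsilon_j})$ for some $q > 2$; pairing this with the weak $L^2_{loc}$ convergence of $\nabla c_{\varepsilon_j}$ identifies the weak limit of the cross-diffusion flux. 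The nonlinear Navier--Stokes convective term and the coupling terms $u_\varepsilon\cdot\nabla n_\varepsilon$ and $u_\varepsilon\cdot\nabla c_\varepsilon$ are handled analogously using strong convergence of $u_\varepsilon$ paired with weak convergence of $\nabla u_\varepsilon$, $\nabla n_\varepsilon$ and $\nabla c_\varepsilon$.
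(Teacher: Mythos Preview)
Your outline is correct and follows the same regularize--estimate--compactness scheme as the paper (approximate system with Yosida-regularized convection, energy-type bounds via testing, Aubin--Lions, identification of limits). One small correction: the inequality $n^{2\alpha}(1+n)^{-2\alpha}=\bigl(n/(1+n)\bigr)^{2\alpha}\le 1$ holds for every $\alpha>0$, so the $n^{2\alpha-1}$-testing step by itself does not use $\alpha\ge 1$; the restriction enters precisely where you invoke it later, namely through $n(1+n)^{-\alpha}\le 1$ in the ``second look'' yielding $\nabla n\in L^2_{loc}$ and in the pointwise bound $|nS|\le C_S$ for the flux --- this matches Lemmas~3.1 and~3.2 of the paper. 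Your treatment of the limit in the cross-diffusion term (strong $L^q$ convergence of $n_\varepsilon S(\cdot,n_\varepsilon,c_\varepsilon)$ via its uniform $L^\infty$ bound and Vitali, paired with weak $L^2$ convergence of $\nabla c_\varepsilon$) is a valid and slightly simpler alternative to the paper's route, which instead upgrades $c_\varepsilon$ to $L^{5/4}((0,T);W^{2,5/4}(\Omega))$ by maximal parabolic regularity, extracts $\nabla c_\varepsilon\to\nabla c$ a.e., and then identifies the weak $L^2$ limit of the whole flux via Egorov's theorem.
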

Going beyond these existence  statements, the study of global dynamic is a natural
continuation. In fact, in \cite{Winklerssdd51215}, Winkler
proved  that in the  three-dimensional bounded {\bf convex} domains, Keller-Segel-Navier-Stokes system with logistic source $\rho n-\mu n^2$
 possesses at least one globally generalized solution, moreover, if $\mu>\frac{\chi\sqrt{\rho_+}}{4}$, then this solution converge to the homogeneous steady state with respect to the
topology of $L^1 (\Omega) \times L^p (\Omega) \times L^2 (\Omega)$ for $p\in[1,6)$. However, the regularity properties and the eventual
smoothness of arbitrary weak solutions are  still  open.
The present paper shows that if  $C_S<2\sqrt{C_N}$,
 after some relaxation time, this weak solution
enjoys further regularity properties and thereby complies with the concept of
weak solutions.
 In fact,  under an explicit condition on the size of $C_S$ relative to $C_N$, we can  secondly prove that in fact any such {\bf weak} solution $(n,c,u)$ becomes
smooth ultimately, and that it approaches the unique spatially homogeneous steady
state $(\bar{n}_0,\bar{n}_0,0)$, where $\bar{n}_0=\frac{1}{|\Omega|}\int_{\Omega}n_0$ and $C_N$ is the best  Poincar\'{e} constant and $C_S$ is given by  \dref{x1.73142vghf48gg}.


\begin{theorem}\label{thaaaeorem3}
 Under the assumptions of Theorem \ref{theorem3},  and assume that in addition
\begin{equation}\label{x1.73142vgssdddhfjjk48}C_S<2\sqrt{C_N},
\end{equation}
where $C_N$ is the best  Poincar\'{e} constant and $C_S$ is given by  \dref{x1.73142vghf48gg}.
Then whenever $(n_0 ,c_0 ,u_0 )$ satisfies \dref{ccvvx1.731426677gg} with $n_0\not\equiv0$.  Then
there exist $T > 0$ and $P\in C^{1,0} (\bar{\Omega}\times[T,\infty))$ such that
\begin{equation}
 \left\{\begin{array}{ll}
 n\in  C^{2,1}(\bar{\Omega}\times(T,\infty)),\\
  c\in  C^{2,1}(\bar{\Omega}\times(T,\infty)),\\
  u\in  C^{2,1}(\bar{\Omega}\times(T,\infty);\mathbb{R}^3),\\
   \end{array}\right.\label{1ssddd.1ddfghyuisda}
\end{equation}
and such that $(n,c,u)$ solves the boundary value problem in \dref{33dfff4451.1fghyuisda}--\dref{33dfff44sdfff51.1fgsdddfhyuisda} classically in
$\bar{\Omega}\times [T,\infty)$. 
Moreover,
$$n(\cdot,t)\rightarrow \bar{n}_0,~~c(\cdot,t)\rightarrow  \bar{n}_0~~\mbox{and}~~~u(\cdot,t)\rightarrow0
~~\mbox{in}~~~L^\infty(\Omega)$$
as $t \rightarrow\infty$,
where $\bar{n}_0=\frac{1}{|\Omega|}\int_{\Omega}n_0$.
\end{theorem}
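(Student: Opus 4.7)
The plan is to exploit the smallness condition \eqref{x1.73142vgssdddhfjjk48} through a coupled Lyapunov analysis forcing exponential $L^2$-decay of $(n-\bar{n}_0,\,c-\bar{n}_0,\,u)$, then to pick a sufficiently large time at which the decaying norms launch a classical solvability argument, and finally to interpolate to obtain $L^\infty$-convergence.

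I would carry out the decay analysis on the approximate solutions $(n_\varepsilon,c_\varepsilon,u_\varepsilon)$ from Theorem \ref{theorem3}, where all the formal computations below are rigorous. Since $\alpha\geq 1$, the structural assumption \eqref{x1.73142vghf48gg} gives the key bound $|n_\varepsilon S(x,n_\varepsilon,c_\varepsilon)|\leq C_S$, converting the cross-diffusion into the scalar estimate $|\int n_\varepsilon S\,\nabla c_\varepsilon\cdot\nabla n_\varepsilon|\leq C_S\|\nabla c_\varepsilon\|_{L^2}\|\nabla n_\varepsilon\|_{L^2}$. Testing the first equation against $n_\varepsilon-\bar{n}_0$ (the transport term drops since $\nabla\cdot u_\varepsilon=0$), the second against $-\Delta c_\varepsilon$ (so that $-\int n_\varepsilon \Delta c_\varepsilon=\int\nabla n_\varepsilon\cdot\nabla c_\varepsilon$ via the Neumann data on $c_\varepsilon$), and the third against $u_\varepsilon$ (rewriting $\int n_\varepsilon\nabla\phi\cdot u_\varepsilon=\int(n_\varepsilon-\bar{n}_0)\nabla\phi\cdot u_\varepsilon$ since $\int u_\varepsilon\cdot\nabla\phi=0$), and invoking Poincar\'e's inequality with best constant $C_N$ together with the sharp bound $C_N\|\nabla c_\varepsilon\|_{L^2}^2\leq\|\Delta c_\varepsilon\|_{L^2}^2$ (which follows from the Neumann condition and $\int(c_\varepsilon-\bar{c}_\varepsilon)=0$), I would assemble
\[
F_\varepsilon(t)=\|n_\varepsilon(\cdot,t)-\bar{n}_0\|_{L^2}^2+\lambda\|\nabla c_\varepsilon(\cdot,t)\|_{L^2}^2+\mu\|u_\varepsilon(\cdot,t)\|_{L^2}^2
\]
with $\lambda,\mu>0$. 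The coupled quadratic form in $\|\nabla n_\varepsilon\|_{L^2}$ and $\|\nabla c_\varepsilon\|_{L^2}$ that arises is strictly dissipative precisely when $C_S<2\sqrt{C_N}$, the algebraic threshold in \eqref{x1.73142vgssdddhfjjk48}, and for a suitable choice of $\lambda,\mu$ one obtains $\dot F_\varepsilon+\delta F_\varepsilon\leq 0$ for some $\delta>0$ independent of $\varepsilon$. Using the a.e. convergence from Theorem \ref{theorem3} and lower semicontinuity of norms, this transfers to the weak limit as $F(t)\leq F(t_0)e^{-\delta(t-t_0)}$ for a.e.\ $t_0>0$.

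With exponential decay at hand, the integrated dissipation also controls $\|\nabla n\|_{L^2}$, $\|\Delta c\|_{L^2}$ and $\|\nabla u\|_{L^2}$ in $L^1_{loc}((0,\infty))$, so I would select a time $t_0>0$ at which $(n(\cdot,t_0),c(\cdot,t_0),u(\cdot,t_0))$ lies in $W^{1,2}(\Omega)\times W^{2,2}(\Omega)\times D(A^\gamma)$ with norms arbitrarily small. Using $t_0$ as a new initial time, local smooth existence for the rotational Keller-Segel-Navier-Stokes system yields a classical solution on some maximal interval $[t_0,T_{\max})$. Smallness of the launch data combined with heat-semigroup estimates on $n,c$ and fractional-power Stokes estimates on $u$ (exploiting $\gamma>3/4$) propagates uniform $C^{2,1}$-bounds to all times, forcing $T_{\max}=\infty$; a weak-strong uniqueness argument then identifies this classical trajectory with the original weak solution on $[t_0,\infty)$, giving \eqref{1ssddd.1ddfghyuisda} with $T:=t_0$ and a pressure $P\in C^{1,0}(\bar{\Omega}\times[T,\infty))$. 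Interpolating the exponential $L^2$-decay with the resulting uniform higher Sobolev bounds, via Gagliardo-Nirenberg for $n,c$ and via $D(A^\gamma)\hookrightarrow L^\infty(\Omega)$ for $u$, yields the claimed $L^\infty$-convergence to $(\bar{n}_0,\bar{n}_0,0)$.

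The main obstacle is the construction of $F_\varepsilon$ itself. In three dimensions the Navier-Stokes convection precludes a clean $H^1$-energy method for $u_\varepsilon$, and the matrix-valued $S$ destroys the standard Keller-Segel energy structure, so all three equations must be balanced at the $L^2$ level within a single coupled quadratic form. The sharp threshold $C_S<2\sqrt{C_N}$ is exactly the condition making this coupled form positive definite, reflecting the dual role of the Poincar\'e constant $C_N$ in controlling both $n-\bar{n}_0$ via $\|\nabla n\|_{L^2}$ and $\nabla c$ via $\|\Delta c\|_{L^2}$. A secondary technical point is the transfer of the strict decay inequality from $(n_\varepsilon,c_\varepsilon,u_\varepsilon)$ to the limit weak solution, which must be done carefully along the time variable so that the energy inequality holds for suitable $t_0$.
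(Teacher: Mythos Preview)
Your Lyapunov construction has a genuine gap in the treatment of the $c$-equation. When you test the second equation against $-\Delta c_\varepsilon$, the transport term does \emph{not} disappear: after an integration by parts using $\nabla\cdot u_\varepsilon=0$ and $u_\varepsilon|_{\partial\Omega}=0$ one is left with
\[
\int_\Omega (u_\varepsilon\cdot\nabla c_\varepsilon)\,\Delta c_\varepsilon
=-\int_\Omega (\nabla u_\varepsilon\cdot\nabla c_\varepsilon)\cdot\nabla c_\varepsilon,
\]
which is cubic and in three dimensions is only controlled by quantities like $\|\nabla u_\varepsilon\|_{L^2}\|\nabla c_\varepsilon\|_{L^4}^2\lesssim \|\nabla u_\varepsilon\|_{L^2}\|\nabla c_\varepsilon\|_{L^2}^{1/2}\|\Delta c_\varepsilon\|_{L^2}^{3/2}$. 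After Young's inequality this produces a factor $\|\nabla u_\varepsilon\|_{L^2}^4\,\|\nabla c_\varepsilon\|_{L^2}^2$ on the right-hand side, and there is no a priori bound on $\|\nabla u_\varepsilon\|_{L^2}^4$ independent of $\varepsilon$, so $\dot F_\varepsilon+\delta F_\varepsilon\le 0$ fails. The paper avoids this entirely by testing the $c$-equation against $c_\varepsilon-\bar n_0$ rather than $-\Delta c_\varepsilon$; then $\int_\Omega(c_\varepsilon-\bar n_0)\,u_\varepsilon\cdot\nabla c_\varepsilon=0$ exactly, and the functional $\frac{B}{2}\|n_\varepsilon-\bar n_0\|_{L^2}^2+\frac{1}{2}\|c_\varepsilon-\bar n_0\|_{L^2}^2$ closes, with the condition $C_S<2\sqrt{C_N}$ arising from the interval $\frac{1}{2C_N}<B<\frac{2}{C_S^2}$ being nonempty. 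Your quoted inequality $C_N\|\nabla c_\varepsilon\|_{L^2}^2\le\|\Delta c_\varepsilon\|_{L^2}^2$ is also stated with the constant on the wrong side (spectral expansion gives $\|\nabla c_\varepsilon\|_{L^2}^2\le C_N\|\Delta c_\varepsilon\|_{L^2}^2$), though this is secondary.

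Your downstream strategy is also substantively different from the paper and rests on ingredients that are not available here. You propose to restart at a time $t_0$ with small data in $W^{1,2}\times W^{2,2}\times D(A^\gamma)$, invoke local classical solvability of the \emph{unregularized} system, and then identify with the weak solution via weak--strong uniqueness. But the paper only establishes local classical existence for the regularized problem \dref{1.1fghyuisda} (the original system carries the nonlinear boundary condition $(\nabla n-nS(x,n,c)\nabla c)\cdot\nu=0$ and full Navier--Stokes convection), and no weak--strong uniqueness statement for Keller--Segel--Navier--Stokes with tensor-valued sensitivity is proved or cited. The paper instead works at the level of the approximate solutions throughout: after the $L^2$-decay it runs a long bootstrap (contraction-mapping estimates for $u_\varepsilon$ in $L^p$, semigroup estimates for $\nabla c_\varepsilon$ in $L^4$, then $n_\varepsilon\in L^\infty$, then maximal Sobolev regularity and Schauder theory) to obtain $\varepsilon$-uniform $C^{2+\theta,1+\theta/2}$ bounds on $(t,t+1)$ for large $t$, and only then passes to the limit by compactness. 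That route is heavier but does not require any classical theory for the limiting system.
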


\subsection{Mathematical challenges and Organization of the paper}

The global existence in three dimensions requires a more delicate proof and only results in a weak solvability. The
main reason for these difficulties is the dimension dependency of the Gagliardo-Nirenberg inequalities.
System \dref{33dfff4451.1fghyuisda}--\dref{33dfff44sdfff51.1fgsdddfhyuisda} incorporates structures of {\bf $3$-dimensional} fluid dynamics and
rotational flux, which involves more complex chemotactic cross-diffusion reinforced
by {\bf signal production} and brings
about many considerable mathematical difficulties.
%
%
%
In fact, according to  \cite{XueXuejjainidd793,Xusddeddff345511215} (see also \cite{Wddffang11215,Winkler11215,Winklesssssrdddsss51215}),
the tensor-valued sensitivity functions result in new mathematical difficulties, mainly linked to the fact that a chemotaxis system with such rotational fluxes
thereby loses an energy-like structure, which is
totally different from \cite{Winkler444ssdff51215}. Moreover, unlike the {\bf signal consumption} system
\dref{1.ss1hddffhjjdllddddssssddggddddffftyy}, we cannot gain the $L^\infty(\Omega)$ estimates of $c$ via the maximum principle
directly. To overcome these difficulties, some new
approaches are necessary when studying the global existence and stabilization for system \dref{33dfff4451.1fghyuisda}--\dref{33dfff44sdfff51.1fgsdddfhyuisda}.
%
The strategy for the proof of the Theorem \ref{thaaaeorem3}  lies in identifying the functional
$$\|n_\varepsilon-\bar{n}_0\|^{{2}}_{L^{{2}}(\Omega)}+
\frac{B}{2}\|c_\varepsilon-\bar{n}_0\|^{{2}}_{L^{{2}}(\Omega)}~~~(\mbox{for some} ~~B>0)$$
 as eventual Lyapunov functional. Apparent estimates for its dissipation rate show $L^2$-convergence, which together with the $L^p$-$L^q$ estimates for the Stokes semigroup and  a contraction mapping
 implies the 
decay of $u_\varepsilon$ in $L^p(\Omega)$ with some $p\geq6$. Next, applying
the variation-of-constants formula
for $c_{\varepsilon}-\bar{n}_0$ and $n_{\varepsilon} $, we can derive the
exponential decay for $\int_\Omega|\nabla c_\varepsilon (x,t)|^{2}$ and
$
\int_\Omega|\nabla c_\varepsilon (x,t)|^{4}$ and boundeness of  $\|n_{\varepsilon}(\cdot,t)\|_{L^{\infty}(\Omega)}$ and $\|c_{\varepsilon}(\cdot, t)\|_{W^{1,\infty}(\Omega)}$ for some large enough $t>2.$
Then based on maximal
Sobolev regularity in the Stokes evolution system and inhomogeneous linear heat
equations, one can successively obtain further ultimate regularity properties of $u_\varepsilon, c_\varepsilon$ and $n_\varepsilon$
which by standard Schauder theory imply eventual smoothness (see Lemmas \ref{x344ccffggfhhlemma45625xxhjhjuioookloghyui}--\ref{lemma45630hhuujjsdfffggguuyy}).
Finally,  collecting the   regularity for $n_\varepsilon,c_\varepsilon$ and $u_\varepsilon$ allows for turning the
weak decay information previously gathered into the desired uniform convergence
statements and thereby complete the proof of Theorem \ref{thaaaeorem3} (see Lemmas \ref{lemma45630223}--\ref{lemma4dd5630hhuujjuuyy}).

\section{Preliminaries}

Our intention is to construct a global weak solution as the limit of smooth solutions of appropriately regularized problem.
Since, the nonlinear boundary condition on $n$ ($(\nabla n-nS(x, n, c))\cdot\nu=0~~x\in \partial\Omega, t>0$) and
 the nonlinear convective term ($\kappa\neq0$ in the
Navie-Stokes subsystem of \dref{33dfff4451.1fghyuisda}--\dref{33dfff44sdfff51.1fgsdddfhyuisda}) brings about a great challenge to the study of system \dref{33dfff4451.1fghyuisda}--\dref{33dfff44sdfff51.1fgsdddfhyuisda},  we shall first follow an idea
of \cite{Wddffang11215} (see also \cite{Kegssddsddfff00,Winkler11215}) to deal with some regularized approximate problems. To this end, we  consider the following approximate system of \dref{33dfff4451.1fghyuisda}--\dref{33dfff44sdfff51.1fgsdddfhyuisda}:
\begin{equation}
\left\{\begin{array}{ll}
   n_{\varepsilon t}+u_{\varepsilon}\cdot\nabla n_{\varepsilon}=\Delta n_{\varepsilon}-\nabla\cdot(n_{\varepsilon}F_{\varepsilon}(n_{\varepsilon})S_\varepsilon(x, n_{\varepsilon}, c_{\varepsilon})\nabla c_{\varepsilon}),\quad
x\in \Omega,\; t>0,\\
    c_{\varepsilon t}+u_{\varepsilon}\cdot\nabla c_{\varepsilon}=\Delta c_{\varepsilon}-c_{\varepsilon}+n_{\varepsilon},\quad
x\in \Omega,\; t>0,\\
u_{\varepsilon t}+\nabla P_{\varepsilon}=\Delta u_{\varepsilon}-\kappa (Y_{\varepsilon}u_{\varepsilon} \cdot \nabla)u_{\varepsilon}+n_{\varepsilon}\nabla \phi,\quad
x\in \Omega,\; t>0,\\
\nabla\cdot u_{\varepsilon}=0,\quad
x\in \Omega,\; t>0,\\
 \disp{\nabla n_{\varepsilon}\cdot\nu=\nabla c_{\varepsilon}\cdot\nu=0,u_{\varepsilon}=0,\quad
x\in \partial\Omega,\; t>0,}\\
\disp{n_{\varepsilon}(x,0)=n_0(x),c_{\varepsilon}(x,0)=c_0(x),\;u_{\varepsilon}(x,0)=u_0(x)},\quad
x\in \Omega,\\
 \end{array}\right.\label{1.1fghyuisda}
\end{equation}
where
\begin{equation}
F_{\varepsilon}(s):=\frac{1}{(1+\varepsilon s)^{3}}~\quad~\mbox{for all}~~s \geq 0~~\mbox{and}~~\varepsilon> 0,
\label{1.ffggvbbnxxccvvn1}
\end{equation}
as well as
\begin{equation}
\begin{array}{ll}
S_\varepsilon(x, n, c) := \rho_\varepsilon(x)S(x, n, c),~~ x\in\bar{\Omega},~~n\geq0,~~c\geq0
 \end{array}\label{3.10gghhjuuloollyuigghhhyy}
\end{equation}
and
$$
Y_{\varepsilon}w := (1 + \varepsilon A)^{-1}w \quad~\mbox{for all}~ w\in L^2_{\sigma}(\Omega)
$$
is a standard Yosida approximation and $A$ is  the realization of the Stokes operator (see \cite{Sohr}).
Here, $(\rho_\varepsilon)_{\varepsilon\in(0,1)} \in C^\infty_0 (\Omega)$ is a family of standard cutoff functions satisfying $0\leq\rho_\varepsilon\leq 1$ in $\Omega$
and $\rho_\varepsilon\nearrow1$ in $\Omega$  as $\varepsilon\searrow0$.

Due to the  nonlinear convective term $(u \cdot \nabla)u$ in third equation of \dref{33dfff4451.1fghyuisda}--\dref{33dfff44sdfff51.1fgsdddfhyuisda}, it is hard  to find a classical
solution. We intend to obtain a weak solution by approximation. 
To begin with,
 the definition of weak solutions to \dref{33dfff4451.1fghyuisda}--\dref{33dfff44sdfff51.1fgsdddfhyuisda}  is given as follows:
 \begin{definition}\label{df1}
 By a global weak solution of \dref{33dfff4451.1fghyuisda}--\dref{33dfff44sdfff51.1fgsdddfhyuisda} we mean a triple
$(n,c,u)$ of functions
%
%
\begin{equation}
 \left\{\begin{array}{ll}
   n\in L_{loc}^1([0,\infty);\bar{\Omega}),
   \\
    c \in L_{loc}^1([0,\infty); W^{1,1}(\Omega)),\\
u \in  L_{loc}^1([0,\infty); W^{1,1}_0(\Omega);\mathbb{R}^{3}),
\\
\end{array}\right.\label{dffff1.1fghyuisdakkklll}
\end{equation}
such that $n\geq 0$ and $c\geq 0$  a.e. in $\Omega\times(0, \infty)$,
\begin{equation}\label{726291hh}
\begin{array}{rl}
&u\otimes u \in L^1_{loc}(\bar{\Omega}\times [0, \infty);\mathbb{R}^{3\times 3})~\mbox{and}~ n~\mbox{belongs to}~ L^1_{loc}(\bar{\Omega}\times [0, \infty)),\\
&cu,~ nu, ~\mbox{and}~nS(x,n,c)\nabla c~ \mbox{belong to}~
L^1_{loc}(\bar{\Omega}\times [0, \infty);\mathbb{R}^{3})
\end{array}
\end{equation}
that $\nabla\cdot u = 0$ a.e. in $\Omega\times(0, \infty)$, and that
\begin{equation}
\begin{array}{rl}\label{eqx45xx12112ccgghh}
&\disp{-\int_0^{\infty}\int_{\Omega}n\varphi_t-\int_{\Omega}n_0\varphi(\cdot,0) }
\\
=&\disp{-
\int_0^\infty\int_{\Omega}\nabla n\cdot\nabla\varphi+\int_0^\infty\int_{\Omega}n
S(x,n,c)\nabla c\cdot\nabla\varphi}
+\disp{\int_0^\infty\int_{\Omega}nu\cdot\nabla\varphi}
\end{array}
\end{equation}
for any $\varphi\in C_0^{\infty} (\bar{\Omega}\times[0, \infty))$, 
\begin{equation}
\begin{array}{rl}\label{eqx45xx12112ccgghhjj}
&\disp{-\int_0^{\infty}\int_{\Omega}c\varphi_t-\int_{\Omega}c_0\varphi(\cdot,0)}
\\
=&\disp{-
\int_0^\infty\int_{\Omega}\nabla c\cdot\nabla\varphi-\int_0^T\int_{\Omega}c\varphi+\int_0^T\int_{\Omega}n\varphi+
\int_0^\infty\int_{\Omega}cu\cdot\nabla\varphi}
\end{array}
\end{equation}
for any $\varphi\in C_0^{\infty} (\bar{\Omega}\times[0, \infty))$  as well as
\begin{equation}
\begin{array}{rl}\label{eqx45xx12112ccgghhjjgghh}
&\disp{-\int_0^{\infty}\int_{\Omega}u\varphi_t-\int_{\Omega}u_0\varphi(\cdot,0) -\kappa
\int_0^\infty\int_{\Omega} u\otimes u\cdot\nabla\varphi }
\\
=&\disp{-
\int_0^\infty\int_{\Omega}\nabla u\cdot\nabla\varphi-
\int_0^\infty\int_{\Omega}n\nabla\phi\cdot\varphi}
\end{array}
\end{equation}
for any $\varphi\in C_0^{\infty} (\bar{\Omega}\times[0, \infty);\mathbb{R}^3)$ fulfilling $\nabla\varphi\equiv 0$. 
\end{definition}

%

The following notion of weak solutions to \dref{33dfff4451.1fghyuisda}--\dref{33dfff44sdfff51.1fgsdddfhyuisda} is taken from [39]. Here and
in the sequel, for vectors $v\in \mathbb{R}^3$ and $w\in \mathbb{R}^3$ we let $v\otimes w$ denote the matrix
$(a_{ij})_{i,j\in\{1,2,3\}}\in \mathbb{R}^{3\times3}$ defined on setting $a_{ij}:= v_iw_j$ for $i,j\in \{1,2,3\}$.

The first lemma concerns the local solvability of system \dref{1.1fghyuisda} in the classical sense. Applying  the Schauder fixed point theorem as well as  the standard regularity theory of parabolic equations and the bootstrap arguments, the local existence of solutions to system \dref{33dfff4451.1fghyuisda}--\dref{33dfff44sdfff51.1fgsdddfhyuisda}
can be established by
the similar arguments as  in Lemma 2.1 of \cite{Winkler31215} (see also \cite{Winklesssssrdddsss51215}), therefore, we give the following lemma without proof.

%
%

\begin{lemma}\label{lemma70}
Let $\Omega \subseteq \mathbb{R}^3$ be a bounded  domain with smooth boundary.
Suppose that \dref{x1.73142vghf48rtgyhu} and \dref{x1.73142vghf48gg}  hold.
%
Then there exist $T_{max,\varepsilon}\in  (0,\infty]$ and
a classical solution $(n_\varepsilon , c_\varepsilon ,u_\varepsilon , P_\varepsilon )$ of \dref{33dfff4451.1fghyuisda}--\dref{33dfff44sdfff51.1fgsdddfhyuisda} in
$\Omega\times(0, T_{max,\varepsilon})$ such that
\begin{equation}
 \left\{\begin{array}{ll}
 n_\varepsilon\in C^0(\bar{\Omega}\times[0,T_{max,\varepsilon}))\cap C^{2,1}(\bar{\Omega}\times(0,T_{max,\varepsilon})),\\
  c_\varepsilon \in  C^0(\bar{\Omega}\times[0,T_{max,\varepsilon}))\cap C^{2,1}(\bar{\Omega}\times(0,T_{max,\varepsilon})),\\
  u_\varepsilon \in  C^0(\bar{\Omega}\times[0,T_{max,\varepsilon}))\cap C^{2,1}(\bar{\Omega}\times(0,T_{max,\varepsilon})),\\
  P_\varepsilon \in  C^{1,0}(\bar{\Omega}\times(0,T_{max,\varepsilon}))\\
   \end{array}\right.\label{1.1ddfghyuisda}
\end{equation}
 classically solving \dref{33dfff4451.1fghyuisda}--\dref{33dfff44sdfff51.1fgsdddfhyuisda} in $\Omega\times[0,T_{max,\varepsilon})$.
%
Moreover,  $n_\varepsilon $ and $c_\varepsilon $ are nonnegative in
$\Omega\times(0, T_{max,\varepsilon})$, and
\begin{equation}
\|n_\varepsilon (\cdot, t)\|_{L^\infty(\Omega)}+\|c_\varepsilon (\cdot, t)\|_{W^{1,\infty}(\Omega)}+\|A^\gamma u_\varepsilon (\cdot, t)\|_{L^{2}(\Omega)}\rightarrow\infty~~ \mbox{as}~~ t\nearrow T_{max,\varepsilon},
\label{1.163072x}
\end{equation}
where $\gamma$ is given by \dref{ccvvx1.731426677gg}.
\end{lemma}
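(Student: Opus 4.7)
The plan is to follow the standard strategy of \cite{Winkler31215}*[Lemma 2.1] and construct $(n_\varepsilon,c_\varepsilon,u_\varepsilon,P_\varepsilon)$ via the Schauder fixed point theorem applied to a suitable decoupling of \dref{1.1fghyuisda}. Fix $\varepsilon\in(0,1)$, let $\gamma\in(3/4,1)$ be as in \dref{ccvvx1.731426677gg}, and pick $q>3$. For $T>0$ small and $R>0$ large in terms of $\|n_0\|_{L^\infty}$, $\|c_0\|_{W^{1,\infty}}$, $\|A^\gamma u_0\|_{L^2}$, I would work on the closed convex set
$$
\mathcal{S}_T := \Bigl\{\,(\bar c,\bar u)\in C^0([0,T];W^{1,q}(\Omega))\times C^0([0,T];D(A^\gamma)) : \sup_{t\in[0,T]}\bigl(\|\bar c(\cdot,t)\|_{W^{1,q}}+\|A^\gamma\bar u(\cdot,t)\|_{L^2}\bigr)\leq R\,\Bigr\}.
$$
The crucial observation is that the $\varepsilon$-regularizations render every nonlinearity benign: the map $s\mapsto sF_\varepsilon(s)=s(1+\varepsilon s)^{-3}$ is uniformly bounded on $[0,\infty)$, $S_\varepsilon=\rho_\varepsilon S$ is $C^2$ with compact support in $\Omega$, and $Y_\varepsilon\bar u=(1+\varepsilon A)^{-1}\bar u\in D(A)$ is strictly smoother than $\bar u$ itself. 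Given $(\bar c,\bar u)\in\mathcal{S}_T$, I would first solve the quasilinear parabolic problem for $n_\varepsilon$ (whose chemotactic flux $n_\varepsilon F_\varepsilon(n_\varepsilon)S_\varepsilon(x,n_\varepsilon,\bar c)\nabla\bar c$ and drift $\bar u$ are both bounded), then the linear heat-type equation for $c_\varepsilon$ with source $n_\varepsilon-c_\varepsilon$ and transport by $\bar u$, and finally the Stokes system for $(u_\varepsilon,P_\varepsilon)$ with forcing $n_\varepsilon\nabla\phi-\kappa(Y_\varepsilon\bar u\cdot\nabla)\bar u$. This defines a map $\Phi:(\bar c,\bar u)\mapsto(c_\varepsilon,u_\varepsilon)$.

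The hypotheses of Schauder's theorem can then be verified in a fairly standard fashion. Using $L^p$--$L^q$ smoothing estimates for the Neumann heat semigroup and fractional powers of the Stokes operator $A$, one shows $\Phi(\mathcal{S}_T)\subseteq\mathcal{S}_T$ provided $T=T(\varepsilon,R)$ is sufficiently small; compactness of $\Phi$ follows by extracting the additional time regularity of $c_{\varepsilon,t}$ and $u_{\varepsilon,t}$ from the same semigroup bounds and invoking Aubin--Lions, while continuity is a routine stability argument for the three linear subproblems. The resulting fixed point is a local mild solution of \dref{1.1fghyuisda}, which by iterated parabolic Schauder theory applied in turn to the equations for $c_\varepsilon$, $u_\varepsilon$ and $n_\varepsilon$ -- each having H\"older-continuous coefficients and right-hand side once the previous step is complete -- upgrades to the full $C^{2,1}$ regularity asserted in \dref{1.1ddfghyuisda}. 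Nonnegativity of $n_\varepsilon$ and $c_\varepsilon$ follows immediately from the parabolic maximum principle, since $0$ is a subsolution of the $n$-equation (no production terms) and, given $n_\varepsilon\geq 0$, also of the $c$-equation.

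The local solution is extended up to a maximal time $T_{\max,\varepsilon}$, and to prove \dref{1.163072x} I would argue by contradiction: if $T_{\max,\varepsilon}<\infty$ and $\|n_\varepsilon(\cdot,t)\|_{L^\infty}+\|c_\varepsilon(\cdot,t)\|_{W^{1,\infty}}+\|A^\gamma u_\varepsilon(\cdot,t)\|_{L^2}$ were to remain bounded on $[0,T_{\max,\varepsilon})$, then restarting the fixed point scheme at a time $t_0$ slightly below $T_{\max,\varepsilon}$ with initial datum $(n_\varepsilon(\cdot,t_0),c_\varepsilon(\cdot,t_0),u_\varepsilon(\cdot,t_0))$ would prolong the solution past $T_{\max,\varepsilon}$, contradicting maximality. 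In my view the only genuinely delicate point -- and hence the main technical obstacle in executing this otherwise routine program -- is to ensure that the time step $T_*$ produced by the Schauder scheme admits a uniform lower bound independent of $t_0$: this forces the parametrization of $\mathcal{S}_T$ and all a priori estimates in the fixed point argument to be expressed in terms of \emph{precisely} the three norms appearing in \dref{1.163072x}, so that a uniform bound on these norms translates into a uniform positive lower bound on $T_*$, and thereby into the desired contradiction.
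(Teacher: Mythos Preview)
Your proposal is correct and follows exactly the approach the paper indicates: the paper does not give a proof of this lemma at all, merely stating that it follows from the Schauder fixed point theorem combined with standard parabolic regularity and bootstrap arguments, ``by the similar arguments as in Lemma~2.1 of \cite{Winkler31215}'' --- which is precisely the scheme you have outlined. Your identification of the uniform lower bound on the restart time $T_*$ as the key point for the extensibility criterion \dref{1.163072x} is accurate and is the standard way this step is handled.
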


Now, we let $(n_\varepsilon,c_\varepsilon  ,u_\varepsilon )$ denote the local solution to \dref{1.1fghyuisda} with the maximal time
give by $T_{max,\varepsilon}$. In what follows, let
$C, C_i,C_{i,*}, \tilde{C}_i, \tilde{C}_{i,*}, \rho_{i},\lambda_{i,*},\rho_{i,*},\tilde{\rho}_{i,*},  M, M_i,\gamma_i,\alpha_i$ and $\mu_i$ denote some different constants, which are independent of $\varepsilon$ and $T_{max,\varepsilon}$, and
if no special explanation, they depend at most on $\Omega, \alpha,  \nabla\phi, n_0 , c_0$ and  $u_0$.

Let us start with recalling some properties which have been established in previous studies and
are fundamental when discussing results concerning the global existence of classical solutions in
the setting of \dref{33dfff4451.1fghyuisda}--\dref{33dfff44sdfff51.1fgsdddfhyuisda}.
The following estimates of $n_\varepsilon$ and $c_\varepsilon$ are basic but important in the proof of our result.

\begin{lemma}\label{fvfgfflemma45}
The solution of \dref{33dfff4451.1fghyuisda}--\dref{33dfff44sdfff51.1fgsdddfhyuisda} satisfies
\begin{equation}
\int_{\Omega}{n_\varepsilon}= \int_{\Omega}{n_{0}}~~\mbox{for all}~~ t\in(0, T_{max,\varepsilon})
\label{ddfgczhhhh2.5ghju48cfg924ghyuji}
\end{equation}
as well as
\begin{equation}
\int_{\Omega}{c_\varepsilon}\leq \max\{\int_{\Omega}{n_{0}},\int_{\Omega}{c_{0}}\}~~\mbox{for all}~~ t\in(0, T_{max,\varepsilon}).
\label{2344ddfgczhhhh2.5ghju48cfg924ghyuji}
\end{equation}
%
%
\end{lemma}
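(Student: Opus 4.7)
The plan is to derive both identities by testing the first two equations of the approximate system \dref{1.1fghyuisda} with the constant function $1$, and then exploiting the boundary conditions together with the divergence-free character of $u_\varepsilon$.

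First I would integrate the $n_\varepsilon$-equation over $\Omega$. The diffusion term produces $\int_{\partial\Omega}\nabla n_\varepsilon\cdot\nu$, which vanishes by the homogeneous Neumann condition in \dref{1.1fghyuisda}. The chemotactic term integrates to $-\int_{\partial\Omega} n_\varepsilon F_\varepsilon(n_\varepsilon)S_\varepsilon(x,n_\varepsilon,c_\varepsilon)\nabla c_\varepsilon\cdot\nu$; by the very definition \dref{3.10gghhjuuloollyuigghhhyy} the cutoff $\rho_\varepsilon$ has compact support in $\Omega$, so $S_\varepsilon\equiv 0$ near $\partial\Omega$ and this boundary term is zero. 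Finally the convective contribution vanishes upon integration by parts, since
\[
\int_\Omega u_\varepsilon\cdot\nabla n_\varepsilon=-\int_\Omega n_\varepsilon\,\nabla\cdot u_\varepsilon+\int_{\partial\Omega} n_\varepsilon u_\varepsilon\cdot\nu=0
\]
by the solenoidality of $u_\varepsilon$ and the no-slip condition $u_\varepsilon|_{\partial\Omega}=0$. Thus $\frac{d}{dt}\int_\Omega n_\varepsilon=0$ on $(0,T_{max,\varepsilon})$, and \dref{ddfgczhhhh2.5ghju48cfg924ghyuji} follows at once.

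Next, integrating the $c_\varepsilon$-equation over $\Omega$, exactly the same reasoning annihilates the diffusion term and the transport term, leaving the linear ODE
\[
\frac{d}{dt}\int_\Omega c_\varepsilon+\int_\Omega c_\varepsilon=\int_\Omega n_\varepsilon=\int_\Omega n_0,
\]
where in the last step I plug in the identity just established. Setting $y(t):=\int_\Omega c_\varepsilon(\cdot,t)$, the explicit solution
\[
y(t)=\int_\Omega n_0+\Bigl(\int_\Omega c_0-\int_\Omega n_0\Bigr)e^{-t}
\]
is either a monotone decay from $\int_\Omega c_0$ down to $\int_\Omega n_0$ (if $\int_\Omega c_0\geq\int_\Omega n_0$) or a monotone increase from $\int_\Omega c_0$ up to $\int_\Omega n_0$ (otherwise), so in either case $y(t)\leq \max\{\int_\Omega n_0,\int_\Omega c_0\}$, which is precisely \dref{2344ddfgczhhhh2.5ghju48cfg924ghyuji}.

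The argument is entirely routine; the only mild subtlety is confirming that the chemotactic boundary flux vanishes after the replacement $S\mapsto S_\varepsilon=\rho_\varepsilon S$, but this is immediate from the compact support of $\rho_\varepsilon$ in $\Omega$ and in fact is the reason the author replaced the mixed no-flux condition of \dref{33dfff44sdfff51.1fghfffyuisda} by the simpler pair $\nabla n_\varepsilon\cdot\nu=\nabla c_\varepsilon\cdot\nu=0$ in the regularized system.
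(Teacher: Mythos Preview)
Your argument is correct and is exactly the standard computation the paper has in mind; indeed the paper states this lemma without proof, treating it as a basic fact. Your handling of the boundary terms (in particular using $\rho_\varepsilon\in C_0^\infty(\Omega)$ to kill the chemotactic flux) and the explicit integration of the scalar ODE for $\int_\Omega c_\varepsilon$ are precisely the intended steps.
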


In the following,  we proceed to derive $\varepsilon$-independent estimates. 
In fact, using the idea comes from \cite{Kegssddsddfff00} and \cite{Zhenddsdddddgssddsddfff00}, one could  propose some regularity estimates
for $n_\varepsilon$ and $c_\varepsilon$ by tracking the time evolution of a certain combinational functional
of them.
%
Here we only state it as a lemma, while for the detailed proof readers can refer to Lemma 3.3 of \cite{Kegssddsddfff00}.


\begin{lemma}\label{lemmaghjffggsjjjjjsddgghhmk4563025xxhjklojjkkk}
 Suppose that 
 \dref{dd1.1fghyuisdakkkllljjjkk}--\dref{ccvvx1.731426677gg} and
 \dref{x1.73142vghf48rtgyhu}--\dref{x1.73142vghf48gg}
 hold with  $\alpha>\frac{1}{3}$.
Then
one can find $C > 0$   independent of $\varepsilon$ such that for all $t\in(0, T_{max,\varepsilon})$, the solution of \dref{33dfff4451.1fghyuisda}--\dref{33dfff44sdfff51.1fgsdddfhyuisda}  satisfies
\begin{equation}
\begin{array}{rl}
&\disp{\int_{\Omega} n_{\varepsilon}^{2\alpha }(x,t)+\int_{\Omega}   c_{\varepsilon}^2(x,t)+\int_{\Omega}  | {u_{\varepsilon}}(x,t)|^2\leq C~\mbox{for all}~ t\in (0, T_{max,\varepsilon}).}\\
\end{array}
\label{111czfvgb2.5ghhjuyuccvviihjj}
\end{equation}
Moreover, for all $t\in(0, T_{max,\varepsilon}-\tau)$,
it holds that
one can find a constant $C > 0$   such that
\begin{equation}
\begin{array}{rl}
&\disp{\int_{t}^{t+\tau}\int_{\Omega} \left[  n_{\varepsilon}^{2\alpha-2} |\nabla {n_{\varepsilon}}|^2+ |\nabla {c_{\varepsilon}}|^2+ |\nabla {u_{\varepsilon}}|^2\right]\leq C,}\\
\end{array}
\label{bnmbncz2.5ghhjuyuivvbnnihjj}
\end{equation}
where \begin{equation}\tau=\min\{1,\frac{1}{6}T_{max,\varepsilon}\}.
\label{bnmbncz2.5ghhjuyussdddsddisdddddvvbnnihjj}
\end{equation}
\end{lemma}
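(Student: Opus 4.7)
The plan is to establish \dref{111czfvgb2.5ghhjuyuccvviihjj} and \dref{bnmbncz2.5ghhjuyuivvbnnihjj} simultaneously by tracking the evolution of a single combined functional of the form
$$
\mathcal{E}_\varepsilon(t) := \frac{1}{2\alpha}\int_\Omega n_\varepsilon^{2\alpha} + A\int_\Omega c_\varepsilon^{2} + B\int_\Omega |u_\varepsilon|^{2},
$$
for suitably chosen constants $A,B>0$, and deriving a Gronwall-type differential inequality with non-negative dissipation of the three gradient terms appearing on the left of \dref{bnmbncz2.5ghhjuyuivvbnnihjj}. The two basic ingredients are the mass conservation $\int_\Omega n_\varepsilon = \int_\Omega n_0$ (Lemma \ref{fvfgfflemma45}) and the pointwise structural bound $|S_\varepsilon| \le C_S(1+n_\varepsilon)^{-\alpha}$ from \dref{x1.73142vghf48gg}. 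Since this is exactly the setting of Lemma 3.3 in \cite{Kegssddsddfff00}, my plan follows that blueprint.

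First I would test the $n_\varepsilon$-equation against $n_\varepsilon^{2\alpha-1}$. The convective term drops by $\nabla\cdot u_\varepsilon=0$ and the no-flux boundary condition, the diffusion produces $(2\alpha-1)\int n_\varepsilon^{2\alpha-2}|\nabla n_\varepsilon|^2$, and the chemotactic flux is estimated using $n_\varepsilon F_\varepsilon(n_\varepsilon)|S_\varepsilon| \le C_S\, n_\varepsilon(1+n_\varepsilon)^{-\alpha}$; a Young splitting then absorbs half of the diffusion and leaves a remainder of the form $C\int |\nabla c_\varepsilon|^2$ modulo lower-order terms on $\{n_\varepsilon\le 1\}$. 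Next I would test the $c_\varepsilon$-equation against $c_\varepsilon$, yielding $\frac12\frac{d}{dt}\int c_\varepsilon^{2}+\int|\nabla c_\varepsilon|^{2}+\int c_\varepsilon^{2}=\int n_\varepsilon c_\varepsilon$, and test the Stokes equation against $u_\varepsilon$, which after noting that $\int (Y_\varepsilon u_\varepsilon\cdot\nabla)u_\varepsilon\cdot u_\varepsilon=0$ gives $\frac12\frac{d}{dt}\int|u_\varepsilon|^{2}+\int|\nabla u_\varepsilon|^{2}=\int n_\varepsilon\nabla\phi\cdot u_\varepsilon$.

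The coupling of the three identities is closed via Gagliardo--Nirenberg and Sobolev embeddings in $\mathbb{R}^{3}$. In particular, I would bound $\|n_\varepsilon\|_{L^q}$ for the $q$ needed on the right-hand sides by interpolating the conserved $L^{1}$-norm against $\|\nabla n_\varepsilon^{\alpha}\|_{L^{2}}$, the latter being exactly the dissipation quantity produced in the first step (up to the constant coming from $n_\varepsilon^{2\alpha-2}|\nabla n_\varepsilon|^{2}=\alpha^{-2}|\nabla n_\varepsilon^{\alpha}|^{2}$). The term $\int n_\varepsilon\nabla\phi\cdot u_\varepsilon$ is handled by $\|\nabla\phi\|_{L^{\infty}}\|n_\varepsilon\|_{L^{6/5}}\|u_\varepsilon\|_{L^{6}}$ combined with $\|u_\varepsilon\|_{L^{6}}\lesssim\|\nabla u_\varepsilon\|_{L^{2}}$, after which $\|n_\varepsilon\|_{L^{6/5}}$ is estimated through the same interpolation; the mixed term $\int n_\varepsilon c_\varepsilon$ and the residual $\int |\nabla c_\varepsilon|^{2}$ are disposed of by Young. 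Choosing $A$ and $B$ large enough, all gradient contributions can be absorbed and one arrives at $\mathcal{E}_\varepsilon'(t)+\mathcal{D}_\varepsilon(t)\le C\mathcal{E}_\varepsilon(t)+C$ with $\mathcal{D}_\varepsilon$ the sum of the three dissipation integrals; a standard ODE comparison then yields \dref{111czfvgb2.5ghhjuyuccvviihjj}, while integrating the inequality over $[t,t+\tau]$ and using the already established $\mathcal{E}_\varepsilon$-bound produces \dref{bnmbncz2.5ghhjuyuivvbnnihjj}.

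The main obstacle I expect is the exponent bookkeeping at the threshold $\alpha>\tfrac{1}{3}$: this condition is exactly what is required for the Gagliardo--Nirenberg interpolation controlling $\|n_\varepsilon\|_{L^{q}}$ (with the critical $q$ arising both from $\int n_\varepsilon c_\varepsilon$ and from $\int n_\varepsilon\nabla\phi\cdot u_\varepsilon$) by $\|\nabla n_\varepsilon^{\alpha}\|_{L^{2}}^{\theta}$ with an admissible power $\theta<2$, so that Young's inequality can close the estimate without overshooting the dissipation. Making this balance work, uniformly in $\varepsilon\in(0,1)$ and up to $T_{\max,\varepsilon}$, is the delicate point that distinguishes the three-dimensional borderline regime from the easier higher-$\alpha$ situation.
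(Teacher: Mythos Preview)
Your overall strategy---combine the three energy identities from testing with $n_\varepsilon^{2\alpha-1}$, $c_\varepsilon$, and $u_\varepsilon$, and close via Gagliardo--Nirenberg interpolation against the conserved $L^1$ mass---is exactly the route the paper takes: the paper does not prove this lemma in detail but refers to Lemma~3.3 of \cite{Kegssddsddfff00}, which is the same blueprint you invoke. However, there is a genuine gap in your final ODE step.

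The differential inequality you write down, $\mathcal{E}_\varepsilon'(t)+\mathcal{D}_\varepsilon(t)\le C\mathcal{E}_\varepsilon(t)+C$, does \emph{not} yield a bound that is uniform in $t\in(0,T_{\max,\varepsilon})$: Gronwall only gives $\mathcal{E}_\varepsilon(t)\le(\mathcal{E}_\varepsilon(0)+1)e^{Ct}$, which grows with $t$ and therefore cannot furnish the $\varepsilon$- and $t$-independent constant in \dref{111czfvgb2.5ghhjuyuccvviihjj}. What is actually needed is an inequality of absorption type, $\mathcal{E}_\varepsilon'+c_0\mathcal{E}_\varepsilon+\mathcal{D}_\varepsilon\le C$, whence the ODE $y'+c_0 y\le C$ immediately gives $y\le\max\{y(0),C/c_0\}$. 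The missing ingredient is that the dissipation already dominates the energy: $\int c_\varepsilon^2$ appears directly from the $-c_\varepsilon$ term in the second equation; $\int|u_\varepsilon|^2\lesssim\int|\nabla u_\varepsilon|^2$ by Poincar\'e (since $u_\varepsilon|_{\partial\Omega}=0$); and $\int n_\varepsilon^{2\alpha}=\|n_\varepsilon^\alpha\|_{L^2}^2$ is controlled by $\|\nabla n_\varepsilon^\alpha\|_{L^2}^2+\|n_\varepsilon\|_{L^1}^{2\alpha}$ via Gagliardo--Nirenberg. Once you insert this absorption, integrating over $[t,t+\tau]$ gives \dref{bnmbncz2.5ghhjuyuivvbnnihjj} exactly as you say.

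A secondary point: your testing procedure tacitly assumes $2\alpha-1>0$, so that $(2\alpha-1)\int n_\varepsilon^{2\alpha-2}|\nabla n_\varepsilon|^2$ is a genuine positive dissipation and $n_\varepsilon^{2\alpha-1}$ is nonsingular. For $\alpha\in(\tfrac13,\tfrac12]$ this fails and the argument in \cite{Kegssddsddfff00} proceeds differently. Since the present paper only invokes the lemma for $\alpha\ge 1$ this does not affect the downstream application, but it is a gap in the proof of the lemma as stated.
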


With Lemma \ref{lemmaghjffggsjjjjjsddgghhmk4563025xxhjklojjkkk} at hand,  we are now in the position to show the solution of the approximate
problem \dref{1.1fghyuisda} is actually global in time, that is,  $T_{max,\varepsilon}=\infty$.


\begin{lemma}\label{kkklemmaghjmk4563025xxhjklojjkkk}
Let $\alpha\geq1$. Then
for all $\varepsilon\in(0,1),$ the solution of  \dref{1.1fghyuisda} is global in time.
\end{lemma}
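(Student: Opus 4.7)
I argue by contradiction using the extensibility criterion \dref{1.163072x}. Suppose $T_{max,\varepsilon}<\infty$ for some $\varepsilon\in(0,1)$; the goal is to show that $\|n_\varepsilon(\cdot,t)\|_{L^\infty(\Omega)}$, $\|c_\varepsilon(\cdot,t)\|_{W^{1,\infty}(\Omega)}$ and $\|A^\gamma u_\varepsilon(\cdot,t)\|_{L^2(\Omega)}$ remain finite as $t\nearrow T_{max,\varepsilon}$, contradicting \dref{1.163072x}. Crucially, here the bounds need not be $\varepsilon$-independent. The starting point is Lemma \ref{lemmaghjffggsjjjjjsddgghhmk4563025xxhjklojjkkk}, which (since $\alpha\geq 1>\frac{1}{3}$) yields $L^\infty_tL^{2\alpha}_x$ control of $n_\varepsilon$ (hence $L^\infty_tL^2_x$), $L^\infty_tL^2_x$ control of $c_\varepsilon$ and $u_\varepsilon$, together with space-time $L^2$ integrability of $n_\varepsilon^{\alpha-1}\nabla n_\varepsilon$, $\nabla c_\varepsilon$ and $\nabla u_\varepsilon$.

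\textbf{Three coupled bootstraps.} Testing the $n_\varepsilon$-equation of \dref{1.1fghyuisda} by $n_\varepsilon^{p-1}$ (for $p\geq 2$), invoking $\nabla\cdot u_\varepsilon=0$, $0\leq F_\varepsilon\leq 1$ and $|S_\varepsilon|\leq C_S(1+n_\varepsilon)^{-\alpha}$, and applying Young's inequality yields
\begin{equation*}
\frac{1}{p}\frac{d}{dt}\int_\Omega n_\varepsilon^p+\frac{p-1}{2}\int_\Omega n_\varepsilon^{p-2}|\nabla n_\varepsilon|^2\leq C(p)\int_\Omega n_\varepsilon^p(1+n_\varepsilon)^{-2\alpha}|\nabla c_\varepsilon|^2.
\end{equation*}
For $p\leq 2\alpha$ the prefactor $n_\varepsilon^p(1+n_\varepsilon)^{-2\alpha}$ is uniformly bounded, so the right-hand side is controlled by $\|\nabla c_\varepsilon\|_{L^2(\Omega)}^2$, which is in $L^1_t$ by \dref{bnmbncz2.5ghhjuyuivvbnnihjj}. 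A time-integration on intervals of length $\tau$ then gives $n_\varepsilon\in L^\infty_tL^p_x$ for $p\leq 2\alpha$. Simultaneously, rewriting the second equation as $(c_\varepsilon)_t-\Delta c_\varepsilon+c_\varepsilon = n_\varepsilon-u_\varepsilon\cdot\nabla c_\varepsilon$ and invoking smoothing estimates for the Neumann heat semigroup in the Duhamel formula upgrades $c_\varepsilon$ to $L^\infty_tW^{1,q}_x$ for progressively larger $q$, while the Stokes Duhamel formula
\begin{equation*}
u_\varepsilon(t)=e^{-tA}u_0+\int_0^te^{-(t-s)A}\Pi\bigl[-\kappa(Y_\varepsilon u_\varepsilon\cdot\nabla)u_\varepsilon+n_\varepsilon\nabla\phi\bigr]\,ds,
\end{equation*}
combined with $L^p$--$L^q$ estimates for the Stokes semigroup (with $\Pi$ the Helmholtz projector) and the bound $\|Y_\varepsilon w\|_{L^\infty(\Omega)}\leq C(\varepsilon)\|w\|_{L^2(\Omega)}$, improves $u_\varepsilon$ analogously toward $L^\infty_tD(A^\gamma)$.

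\textbf{Closure and main obstacle.} A Moser-type iteration in the spirit of \cite{Winkler31215, Kegssddsddfff00} now couples the three bootstraps: improved $c_\varepsilon$-regularity allows one to push $p$ past $2\alpha$ in the first bootstrap, the resulting better $L^p$-control of $n_\varepsilon$ feeds back into the $c_\varepsilon$ and $u_\varepsilon$ estimates, and iterating drives $p\to\infty$. This delivers all three bounds required for contradiction with \dref{1.163072x}, hence $T_{max,\varepsilon}=\infty$. The delicate point lies in the iteration past $p=2\alpha$: the problematic term satisfies only $\int_\Omega n_\varepsilon^p(1+n_\varepsilon)^{-2\alpha}|\nabla c_\varepsilon|^2\lesssim \int_\Omega n_\varepsilon^{p-2\alpha}|\nabla c_\varepsilon|^2$, and in three dimensions the Sobolev embeddings are far less generous than in 2D. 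The assumption $\alpha\geq 1$ is precisely what makes the resulting Gagliardo--Nirenberg interpolation close; for each fixed $\varepsilon$ the additional damping from $F_\varepsilon$ keeps every estimate finite (though not uniformly in $\varepsilon$), which is all that the present global-existence statement demands.
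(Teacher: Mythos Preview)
Your outline is in the right spirit---contradiction via the extensibility criterion \dref{1.163072x}, with $\varepsilon$-dependent constants permitted---but the paper's argument is considerably more direct and avoids a Moser iteration altogether. The decisive observation you do not exploit is the pointwise bound $F_\varepsilon(s)=(1+\varepsilon s)^{-3}\le\varepsilon^{-3}s^{-3}$. Testing the first equation in \dref{1.1fghyuisda} against $n_\varepsilon^{5+2\alpha}$ and \emph{retaining} the factor $F_\varepsilon$ (rather than discarding it via $F_\varepsilon\le 1$ as you do), the cross term becomes
\[
\int_\Omega n_\varepsilon^{5+2\alpha}F_\varepsilon(n_\varepsilon)|S_\varepsilon||\nabla n_\varepsilon||\nabla c_\varepsilon|
\le \frac{C_S}{\varepsilon^{3}}\int_\Omega n_\varepsilon^{2+\alpha}|\nabla n_\varepsilon||\nabla c_\varepsilon|
\le \frac{5+2\alpha}{2}\int_\Omega n_\varepsilon^{4+2\alpha}|\nabla n_\varepsilon|^2+C(\varepsilon)\int_\Omega|\nabla c_\varepsilon|^2,
\]
which together with \dref{bnmbncz2.5ghhjuyuivvbnnihjj} yields $n_\varepsilon\in L^\infty_t L^{6+2\alpha}_x$ in a single stroke. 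From this high integrability and the bound $\|Y_\varepsilon u_\varepsilon\|_{L^\infty}\le C(\varepsilon)\|u_\varepsilon\|_{L^2}$ the paper then obtains, in linear succession and without further feedback, $\|\nabla u_\varepsilon\|_{L^2}$ (test against $Au_\varepsilon$), $\|A^\gamma u_\varepsilon\|_{L^2}$ and $\|u_\varepsilon\|_{L^\infty}$ (Stokes-semigroup smoothing), $\|c_\varepsilon\|_{W^{1,\infty}}$ (heat equation), and finally $\|n_\varepsilon\|_{L^\infty}$ via Duhamel with the now-bounded drift $F_\varepsilon S_\varepsilon\nabla c_\varepsilon+u_\varepsilon$.

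Your coupled three-way bootstrap could plausibly be made to work, but as written the displayed inequality already replaces $F_\varepsilon$ by $1$, leaving only the saturation $(1+n_\varepsilon)^{-2\alpha}$; the closing sentence ``the additional damping from $F_\varepsilon$ keeps every estimate finite'' therefore has no concrete hook in your computations. Without the $\varepsilon^{-3}s^{-3}$ trick, closing a Moser loop in three dimensions is genuinely delicate and would require you to spell out each interpolation step; moreover the assertion that ``$\alpha\ge 1$ is precisely what makes the Gagliardo--Nirenberg interpolation close'' is misplaced here, since in the paper's proof the hypothesis $\alpha\ge 1$ plays no essential role beyond invoking Lemma \ref{lemmaghjffggsjjjjjsddgghhmk4563025xxhjklojjkkk}. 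The paper's route is both shorter and safer: it isolates the $\varepsilon$-regularisation of the chemotactic flux in one transparent estimate rather than relying on it implicitly.
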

\begin{proof}
Firstly, we will
 prove that for any $\tau\in (0,T_{max,\varepsilon} ),$
\begin{equation}
\begin{array}{rl}
\|A^\gamma u_{\varepsilon}(\cdot, t)\|_{L^2(\Omega)}+\|c_{\varepsilon}(\cdot, t)\|_{W^{1,\infty}(\Omega)}+\|n_{\varepsilon}(\cdot, t)\|_{L^{\infty}(\Omega)}\leq&\disp{C({\tau})~~ \mbox{for all}~~ t\in(\tau,T_{max,\varepsilon})}\\
\end{array}
\label{cz2ddff.57151ccvssssssdffffhhjjjkkkuuifghhhivhccvvhjjjkkhhggjjllll}
\end{equation}
holds with some $C(\tau) > 0$. To do so, we fix $\tau \in(0,T_{max,\varepsilon})$ such that $\tau < 1.$
In view of  \dref{1.ffggvbbnxxccvvn1}, we derive
$$F_{\varepsilon}(n_{\varepsilon})\leq\disp\frac{1}{\varepsilon^3 n_{\varepsilon}^3},$$
so that, by multiplying the first equation in $\dref{1.1fghyuisda}$ by $ n_{\varepsilon}^{5+2\alpha}$ and using $\nabla\cdot u_\varepsilon=0$,
\begin{equation}
\begin{array}{rl}
&\disp{\frac{1}{{6+2\alpha}}\frac{d}{dt}\|{ n_{\varepsilon} }\|^{{{6+2\alpha}}}_{L^{{6+2\alpha}}(\Omega)}+
({5+2\alpha})\int_{\Omega}  n_{\varepsilon}^{4+2\alpha}|\nabla n_{\varepsilon}|^2}\\
=&\disp{-
\int_{\Omega}  n_{\varepsilon}^{5+2\alpha}\nabla\cdot(n_{\varepsilon}\frac{1}{(1+\varepsilon n_{\varepsilon})^3}S_\varepsilon(x, n_{\varepsilon}, c_{\varepsilon})\cdot\nabla c_{\varepsilon})}\\
\leq&\disp{({5+2\alpha})
\int_{\Omega}  n_{\varepsilon}^{5+2\alpha}\frac{1}{(1+\varepsilon n_{\varepsilon})^3}|S_\varepsilon(x, n_{\varepsilon}, c_{\varepsilon})||\nabla n_{\varepsilon}||\nabla c_{\varepsilon}|~~\mbox{for all}~~ t\in(0, T_{max,\varepsilon}).}
\end{array}
\label{55hhjjcffghhhjkklddffssddgglffghhhz2.5}
\end{equation}
Recalling \dref{x1.73142vghf48gg}, by Young inequality, one can see that
\begin{equation}
\begin{array}{rl}
&\disp\int_{\Omega} n_{\varepsilon}^{5+2\alpha}\frac{1}{(1+\varepsilon n_{\varepsilon})^3}|S_\varepsilon(x, n_{\varepsilon}, c_{\varepsilon})||\nabla n_{\varepsilon}||\nabla c_{\varepsilon}|\\
\leq&\disp C_S\int_{\Omega} n_{\varepsilon}^{5+2\alpha}\frac{1}{(1+\varepsilon n_{\varepsilon})^3}(1+n_\varepsilon)^{-\alpha}|\nabla n_{\varepsilon}||\nabla c_{\varepsilon}|\\
\leq&\disp{\frac{1}{\varepsilon^3}C_S\int_{\Omega} n_{\varepsilon}^{2+\alpha} |\nabla n_{\varepsilon}||\nabla c_{\varepsilon}|}\\
\leq&\disp{\frac{({5+2\alpha})}{2}\int_{\Omega} n_{\varepsilon}^{4+2\alpha} |\nabla n_{\varepsilon}|^2+C_1\int_{\Omega} |\nabla c_{\varepsilon}|^2~~\mbox{for all}~~ t\in(0, T_{max,\varepsilon}),}
\end{array}
\label{55hhjjcffghhhjkkllfffghggggghhfghhhz2.5}
\end{equation}
where $C_1$ is a positive constant, as all subsequently appearing constants $C_2, C_3, \ldots$ possibly depend on
 $\varepsilon$.
 This combined with \dref{bnmbncz2.5ghhjuyuivvbnnihjj} implies that
%
 \begin{equation}
\begin{array}{rl}
&\disp{\int_{\Omega}n^{6+2\alpha}_{\varepsilon}(x,t)\leq C_2~\mbox{for all}~ t\in (0, T_{max,\varepsilon})}\\
\end{array}
\label{czfvgb2.5ghhjuyucffhhhhhhhjjggcvviihjj}
\end{equation}
holds.
 Relying on properties of the Yosida approximation
$Y_{\varepsilon}$, we can also derive that
%
\begin{equation}
\|Y_{\varepsilon}u_{\varepsilon}(\cdot,t)\|_{L^\infty(\Omega)}=\|(I+\varepsilon A)^{-1}u_{\varepsilon}(\cdot,t)\|_{L^\infty(\Omega)}\leq C_3\|u_{\varepsilon}(\cdot,t)\|_{L^2(\Omega)}\leq C_4~~\mbox{for all}~~t\in(0,T_{max,\varepsilon})
\label{ssdcfvgdhhjjdfghgghjjnnhhkklld911cz2.5ghju48}
\end{equation}
and  some positive constansts $C_3$ and $C_4$ depend  on $\varepsilon.$

Next,  testing the projected Stokes equation $u_{\varepsilon t} +Au_{\varepsilon} =  \mathcal{P}[-\kappa (Y_{\varepsilon}u_{\varepsilon} \cdot \nabla)u_{\varepsilon}+n_{\varepsilon}\nabla \phi]$ by $Au_{\varepsilon}$, we derive from the Young inequality as well as  \dref{czfvgb2.5ghhjuyucffhhhhhhhjjggcvviihjj} and \dref{ssdcfvgdhhjjdfghgghjjnnhhkklld911cz2.5ghju48} that
%
\begin{equation}
\begin{array}{rl}
&\disp{\frac{1}{{2}}\frac{d}{dt}\|A^{\frac{1}{2}}u_{\varepsilon}\|^{{{2}}}_{L^{{2}}(\Omega)}+
\int_{\Omega}|Au_{\varepsilon}|^2 }\\
=&\disp{ \int_{\Omega}Au_{\varepsilon}\mathcal{P}(-\kappa
(Y_{\varepsilon}u_{\varepsilon} \cdot \nabla)u_{\varepsilon})+ \int_{\Omega}\mathcal{P}[n_{\varepsilon}\nabla\phi] Au_{\varepsilon}}\\
\leq&\disp{ \frac{1}{2}\int_{\Omega}|Au_{\varepsilon}|^2+\kappa^2\int_{\Omega}
|(Y_{\varepsilon}u_{\varepsilon} \cdot \nabla)u_{\varepsilon}|^2+ \|\nabla\phi\|^2_{L^\infty(\Omega)}\int_{\Omega}n_{\varepsilon}^2}\\
\leq&\disp{ \frac{1}{2}\int_{\Omega}|Au_{\varepsilon}|^2+C_5\int_{\Omega}
|\nabla u_{\varepsilon}|^2+ C_6~~\mbox{for all}~~t\in(0,T_{max,\varepsilon}),}\\
\end{array}
\label{ddfghgghjjnnhhkklld911cz2.5ghju48}
\end{equation}
which together with
\dref{bnmbncz2.5ghhjuyuivvbnnihjj} implies
 \begin{equation}
\begin{array}{rl}
&\disp{\int_{\Omega}|\nabla u_{\varepsilon}(x,t)|^{2}\leq C_7~~~\mbox{for all}~~ t\in (0, T_{max,\varepsilon})}\\
\end{array}
\label{111czfvgsssssb2.5gcccchddhjuyucffhhhhhhhjjggcvviihjj}
\end{equation}
 by some basic calculation.
Now, let $h_{\varepsilon}(x,t)=\mathcal{P}[n_{\varepsilon}\nabla \phi-\kappa (Y_{\varepsilon}u_{\varepsilon} \cdot \nabla)u_{\varepsilon} ]$.
Then combined with \dref{111czfvgsssssb2.5gcccchddhjuyucffhhhhhhhjjggcvviihjj}  yields  to \begin{equation}
\|h_{\varepsilon}(\cdot,t)\|_{L^{2}(\Omega)}\leq C_{8} ~~~\mbox{for all}~~ t\in(0,T_{max,\varepsilon})
\label{33444cfghhh29fgsssgggx96302222114}
\end{equation}
by using \dref{czfvgb2.5ghhjuyucffhhhhhhhjjggcvviihjj} and \dref{111czfvgsssssb2.5gcccchddhjuyucffhhhhhhhjjggcvviihjj}. Now, we express $A^\gamma u_{\varepsilon}$ by its variation-of-constants representation and make use of well-known
smoothing properties of the Stokes semigroup (\cite{Giga1215}) to obtain $C_{9}> 0$ such that for any $\tau\in (0,T_{max,\varepsilon} )$,
\begin{equation}
\begin{array}{rl}
\|A^\gamma u_{\varepsilon}(\cdot, t)\|_{L^2(\Omega)}\leq&\disp{C_{9}~~ \mbox{for all}~~ t\in(\tau,T_{max,\varepsilon}),}\\
\end{array}
\label{cz2.57151ccvvhccvvhjjjkkhhggdddjjllll}
\end{equation}
where $\gamma\in (\frac{3}{4}, 1)$. Since,
 $D(A^\gamma)$ is continuously embedded into $L^\infty(\Omega)$ by $\gamma>\frac{3}{4}$, so that, \dref{cz2.57151ccvvhccvvhjjjkkhhggdddjjllll} yields to
  for some positive constant $C_{10}$ such that
 \begin{equation}
\begin{array}{rl}
\|u_{\varepsilon}(\cdot, t)\|_{L^\infty(\Omega)}\leq  C_{10}~~ \mbox{for all}~~ t\in(\tau,T_{max,\varepsilon}).\\
\end{array}
\label{cz2.5jkkcvvvhjkfffffkhhgll}
\end{equation}

Now,  test the second equation of $\dref{1.1fghyuisda}$ by $-\Delta c_{\varepsilon}$ and obtain, upon two applications of
Young's inequality, that
\begin{equation}
\begin{array}{rl}
&\disp\frac{1}{{2}}\disp\frac{d}{dt}\|\nabla{c_{\varepsilon}}\|^{{{2}}}_{L^{{2}}(\Omega)}+
\int_{\Omega} |\Delta c_{\varepsilon}|^2+ \int_{\Omega} |\nabla c_{\varepsilon}|^2\\
=&\disp{-\int_{\Omega} n_{\varepsilon}\Delta c_{\varepsilon}+\int_{\Omega} \Delta c_{\varepsilon}u_{\varepsilon}\cdot\nabla c_{\varepsilon}}\\
\leq&\disp{\frac{1}{2}\int_{\Omega} |\Delta c_{\varepsilon}|^2+\int_{\Omega} n_{\varepsilon}^2+\sup_{t\in(\tau,T_{max,\varepsilon})}\|u_{\varepsilon}(\cdot,t)\|^2_{L^\infty(\Omega)}\int_{\Omega} |\nabla c_{\varepsilon}|^2 ~~\mbox{for all}~~ t\in(\tau, T_{max,\varepsilon}).}\\
\end{array}
\label{hhxxcdfvvjjczhghhhjj2.5}
\end{equation}
 Recalling the bounds provided by  \dref{cz2.5jkkcvvvhjkfffffkhhgll} as well as  \dref{czfvgb2.5ghhjuyucffhhhhhhhjjggcvviihjj} and \dref{111czfvgb2.5ghhjuyuccvviihjj}, this immediately implies
\begin{equation}
\|c_{\varepsilon}(\cdot, t)\|_{W^{1,2}(\Omega)}\leq C_{11}~~\mbox{for all}~~ t\in(\tau, T_{max,\varepsilon})
\label{ddxxxcvvdsssdddcvddffbbggddczv.5ghcfg924ghyuji}
\end{equation}
by some calculation.
In light of Lemma 2.1 of \cite{Ishida} and the Young inequality, we have
\begin{equation}
\begin{array}{rl}
&\|\nabla c_{\varepsilon}(\cdot, t)\|_{L^{\infty}(\Omega)}\\
\leq&\disp{C_{12}(1+\|c_{\varepsilon}(\cdot, t)-u_{\varepsilon}(\cdot, t)\cdot c_{\varepsilon}(\cdot, t)\|_{L^4(\Omega)})}\\
\leq&\disp{C_{12}(1+\|c_{\varepsilon}(\cdot, t)\|_{L^4(\Omega)}+\|u_{\varepsilon}(\cdot, t)\|_{L^\infty(\Omega)} \|\nabla c_{\varepsilon}(\cdot, t)\|_{L^4(\Omega)})}\\
\leq&\disp{C_{12}(1+\|c_{\varepsilon}(\cdot, t)\|_{L^4(\Omega)}+\|u_{\varepsilon}(\cdot, t)\|_{L^\infty(\Omega)} \|\nabla c_{\varepsilon}(\cdot, t)\|_{L^\infty(\Omega)}^{\frac{1}{2}}\|\nabla c_{\varepsilon}(\cdot, t)\|_{L^2(\Omega)}^{\frac{1}{2}})}\\
\leq&\disp{C_{13}(1+ \|\nabla c_{\varepsilon}(\cdot, t)\|_{L^\infty(\Omega)}^{\frac{1}{2}})~~ \mbox{for all}~~ t\in(\tau,T_{max,\varepsilon}),}\\
\end{array}
\label{zjccffgbhjcvvvbscddddz2.5297x96301ku}
\end{equation}
which combined  with \dref{ddxxxcvvdsssdddcvddffbbggddczv.5ghcfg924ghyuji} implies that
 \begin{equation}
\begin{array}{rl}
\|c_{\varepsilon}(\cdot, t)\|_{W^{1,\infty}(\Omega)}\leq  C_{14}~~ \mbox{for all}~~ t\in(\tau,T_{max,\varepsilon})\\
\end{array}
\label{cz2.5jkkcvvvhjkmmffffllfkhhgll}
\end{equation}
by using the Gagliardo--Nirenberg inequality.

Furthermore, applying the variation-of-constants formula to the $n_{\varepsilon}$-equation in \dref{1.1fghyuisda}, we derive that  for any $T\in (\tau, T_{max,\varepsilon})$,
\begin{equation}
n_{\varepsilon}(\cdot,t)=e^{(t-\tau)\Delta}n_{\varepsilon}(\cdot,\tau)-\int_{\tau}^{t}e^{(t-s)\Delta}\nabla\cdot(n_{\varepsilon}(\cdot,s)\tilde{h}_{\varepsilon}(\cdot,s)) ds,~~ t\in(\tau, T),
\label{5555fghbnmcz2.5ghjjjkkklu48cfg924ghyuji}
\end{equation}
where $\tilde{h}_{\varepsilon}:=\frac{1}{(1+\varepsilon n_{\varepsilon})^3}S_\varepsilon(x, n_{\varepsilon}, c_{\varepsilon})\nabla c_{\varepsilon}+u_\varepsilon$. Next, by \dref{x1.73142vghf48gg}, \dref{cz2.5jkkcvvvhjkmmffffllfkhhgll} and \dref{cz2.5jkkcvvvhjkfffffkhhgll}, we also have
 $$
\begin{array}{rl}
\|\tilde{h}_{\varepsilon}(\cdot, t)\|_{L^{\infty}(\Omega)}\leq  C_{15}~~ \mbox{for all}~~ t\in(\tau,T_{max,\varepsilon}),\\
\end{array}
$$
 so that, we can thus estimate
\begin{equation}
\begin{array}{rl}
\disp\|n_{\varepsilon}(\cdot,t)\|_{L^\infty(\Omega)}\leq&\|e^{(t-\tau)\Delta}n_{\varepsilon}(\cdot,\tau)\|_{L^\infty(\Omega)}+\disp\int_{\tau}^t\| e^{(t-s)\Delta}\nabla\cdot(n_{\varepsilon}(\cdot,s)\tilde{h}_{\varepsilon}(\cdot,s)\|_{L^\infty(\Omega)}ds\\
\leq&\disp C_{16}(t-\tau)^{-\frac{3}{2}}\|n_0\|_{L^1(\Omega)}+\disp C_{16}\int_{\tau}^t(t-s)^{-\frac{7}{8}}e^{-\lambda_1(t-s)}\|n_{\varepsilon}(\cdot,s)\tilde{h}_{\varepsilon}(\cdot,s)\|_{L^4(\Omega)}ds\\
\leq&\disp C_{18}+\disp C_{17}\int_{\tau}^t(t-s)^{-\frac{7}{8}}e^{-\lambda_1(t-s)}\|n_{\varepsilon}(\cdot,s)\|_{L^4(\Omega)}ds\\
\leq&\disp C_{18}+\disp C_{19}\int_{\tau}^t(t-s)^{-\frac{7}{8}}e^{-\lambda_1(t-s)}\|n_{\varepsilon}(\cdot,s)\|_{L^\infty(\Omega)}^{\frac{1}{2}}
\|n_{\varepsilon}(\cdot,s)\|_{L^2(\Omega)}^{\frac{1}{2}}ds\\
\leq&\disp C_{18}+\disp C_{20}\sup_{s\in(\tau, T_{max,\varepsilon})}\|n_{\varepsilon}(\cdot,s)\|_{L^\infty(\Omega)}^{\frac{1}{2}}~~\mbox{for all}~~ t\in(\tau, T),\\
\end{array}
\label{ccvbccvvbbnnndffghhjjvcvvbcsssscfbbnfgbghjjccmmllffvvggcvvvvbbjjkkdffzjscz2.5297x9630xxy}
\end{equation}
where
$$\begin{array}{rl}\disp C_{20}=\int_{0}^{\infty}s^{-\frac{7}{8}}e^{-\lambda_1s}\|n_{\varepsilon}(\cdot,s)\|_{L^2(\Omega)}^{\frac{1}{2}}
&\disp{<+\infty,}\\
\end{array}
$$
$\lambda_1$ is the first nonzero eigenvalue of $-\Delta$ on $\Omega$ under
the Neumann boundary condition.
And  thereby
\begin{equation}
\begin{array}{rl}
\|n_{\varepsilon}(\cdot, t)\|_{L^{\infty}(\Omega)}\leq&\disp{C_{21}~~ \mbox{for all}~~ t\in(\tau,T_{max,\varepsilon})}\\
\end{array}
\label{cz2ddff.57151ccvsssshhjjjkkkuuifghhhivhccvvhjjjkkhhggjjllll}
\end{equation}
by some basic calculation and $T\in(\tau,T_{max,\varepsilon})$ was
arbitrary.

Together with   \dref{cz2.57151ccvvhccvvhjjjkkhhggdddjjllll},
\dref{cz2.5jkkcvvvhjkmmffffllfkhhgll} and \dref{cz2ddff.57151ccvsssshhjjjkkkuuifghhhivhccvvhjjjkkhhggjjllll}, yields to \dref{cz2ddff.57151ccvssssssdffffhhjjjkkkuuifghhhivhccvvhjjjkkhhggjjllll}.
Assume that $T_{max,\varepsilon}< \infty$. Due to \dref{cz2ddff.57151ccvssssssdffffhhjjjkkkuuifghhhivhccvvhjjjkkhhggjjllll},
 we apply Lemma \ref{lemma70} to reach a contradiction and hence the proof of this Lemma is completed.
\end{proof}

\section{Global existence for   problem \dref{33dfff4451.1fghyuisda}--\dref{33dfff44sdfff51.1fgsdddfhyuisda}}
In order to prove Theorem \ref{theorem3}, we must state some further regularity properties for the approximate solutions obtained in
the last section.
To this end,  by Lemma \ref{aasslemmafggg78630jklhhjj}, we immediately
obtain the boundedness of $c_\varepsilon$ with respect to the norm in $L^p (\Omega)$ for any $p>1$.

\begin{lemma}\label{aasslemmafggg78630jklhhjj}
Let $(n_\varepsilon,c_\varepsilon,u_\varepsilon)$ be the solution of \dref{1.1ddfghyuisda}. Then there exists a positive constant $C$ independent of $\varepsilon$ such that
\begin{equation}
\|c_{\varepsilon}(\cdot, t)\|_{L^p(\Omega)}+\|n_{\varepsilon}(\cdot, t)\|_{L^2(\Omega)}\leq C~~ \mbox{for all}~~ t>0
\label{3.10gghhjuuloollgghhhyhh}
\end{equation}
and any $p>2.$
Moreover,  there exists $C>0$ independent of $\varepsilon$ such that, for each $T>0$,
\begin{equation}
\begin{array}{rl}
&\disp{\int_{0}^{ T}\int_{\Omega}c^{{{p}-2}}_{\varepsilon}|\nabla c_{\varepsilon}|^2+\int_{0}^{ T}\int_{\Omega}|\nabla n_{\varepsilon}|^2\leq C(T+1).}\\
\end{array}
\label{bnmbncz2.5ghhjugghjllllljdfghhjjyuivssdddvbnklllnihjj}
\end{equation}
\end{lemma}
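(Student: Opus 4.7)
My plan is to combine the uniform $L^{2\alpha}$-bound on $n_\varepsilon$ provided by Lemma \ref{lemmaghjffggsjjjjjsddgghhmk4563025xxhjklojjkkk} with a standard $L^p$-testing of the $c_\varepsilon$-equation. Since $\alpha\geq 1$, H\"older's inequality on the bounded domain $\Omega$ gives
\begin{equation*}
\|n_\varepsilon(\cdot,t)\|_{L^2(\Omega)}\leq |\Omega|^{\frac{1}{2}-\frac{1}{2\alpha}}\|n_\varepsilon(\cdot,t)\|_{L^{2\alpha}(\Omega)}\leq C,
\end{equation*}
which already settles the $n_\varepsilon$-part of \dref{3.10gghhjuuloollgghhhyhh}. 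Moreover, Lemma \ref{kkklemmaghjmk4563025xxhjklojjkkk} forces $T_{\max,\varepsilon}=\infty$ and hence $\tau=1$ in Lemma \ref{lemmaghjffggsjjjjjsddgghhmk4563025xxhjklojjkkk}, so a covering of $[0,T]$ by unit intervals already supplies $\int_0^T\!\int_\Omega|\nabla c_\varepsilon|^2\leq C(T+1)$ as a baseline.

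For the $L^p$-bound on $c_\varepsilon$ with arbitrary $p>2$, I would multiply the second equation of \dref{1.1fghyuisda} by $pc_\varepsilon^{p-1}$ and integrate. Since $\nabla\cdot u_\varepsilon=0$, the convective contribution vanishes and one arrives at the identity
\begin{equation*}
\frac{d}{dt}\int_\Omega c_\varepsilon^p+\frac{4(p-1)}{p}\int_\Omega|\nabla c_\varepsilon^{p/2}|^2+p\int_\Omega c_\varepsilon^p=p\int_\Omega n_\varepsilon c_\varepsilon^{p-1}.
\end{equation*}
H\"older gives $\int_\Omega n_\varepsilon c_\varepsilon^{p-1}\leq \|n_\varepsilon\|_{L^2}\|c_\varepsilon^{p/2}\|_{L^{4(p-1)/p}}^{2(p-1)/p}$. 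The three-dimensional Gagliardo--Nirenberg inequality applied to $w:=c_\varepsilon^{p/2}$ and anchored at $\|w\|_{L^{2/p}}=(\int_\Omega c_\varepsilon)^{p/2}$, uniformly bounded by Lemma \ref{fvfgfflemma45}, yields an interpolation exponent $a\in(0,1)$ for which $a\cdot\frac{2(p-1)}{p}=\frac{3(2p-3)}{3p-1}<2$; Young's inequality then absorbs the resulting $\|\nabla w\|_{L^2}$-contribution into the second term on the left-hand side. The leftover differential inequality $y'+py\leq C$ for $y(t):=\|c_\varepsilon(\cdot,t)\|_{L^p(\Omega)}^p$ produces the uniform-in-time bound by a Gronwall argument, and integrating the same identity in $t$ over $(0,T)$ together with $|\nabla c_\varepsilon^{p/2}|^2=\tfrac{p^2}{4}c_\varepsilon^{p-2}|\nabla c_\varepsilon|^2$ delivers the $c_\varepsilon$-part of \dref{bnmbncz2.5ghhjugghjllllljdfghhjjyuivssdddvbnklllnihjj}.

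For $\int_0^T\!\int_\Omega|\nabla n_\varepsilon|^2$, I would test the first equation of \dref{1.1fghyuisda} by $n_\varepsilon$; the transport term again drops out, and the chemotactic contribution is controlled pointwise by
\begin{equation*}
n_\varepsilon F_\varepsilon(n_\varepsilon)|S_\varepsilon(x,n_\varepsilon,c_\varepsilon)|\leq C_S\,\frac{n_\varepsilon}{(1+n_\varepsilon)^\alpha}\leq C_S,
\end{equation*}
the final step crucially using $\alpha\geq 1$ through the elementary bound $\sup_{s\geq 0}s(1+s)^{-\alpha}\leq 1$. Young's inequality then produces $\frac{d}{dt}\|n_\varepsilon\|_{L^2}^2+\|\nabla n_\varepsilon\|_{L^2}^2\leq C_S^2\|\nabla c_\varepsilon\|_{L^2}^2$, and integrating in time together with the baseline $L^2_tL^2_x$-bound on $\nabla c_\varepsilon$ above closes the argument.

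The main obstacle is exactly this cross-diffusive term: without the subcritical structure $n_\varepsilon(1+n_\varepsilon)^{-\alpha}\in L^\infty$ granted by $\alpha\geq 1$, it could not be tamed by the parabolic dissipation alone and the whole bootstrap would collapse. Everything else reduces to careful bookkeeping of Gagliardo--Nirenberg exponents in three dimensions together with the previously obtained uniform information on $n_\varepsilon$, the $L^1$-bound on $c_\varepsilon$, and the divergence-freeness of $u_\varepsilon$.
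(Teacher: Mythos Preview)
Your proposal is correct and follows essentially the same approach as the paper: both test the $c_\varepsilon$-equation by $c_\varepsilon^{p-1}$, use the $L^2$-bound on $n_\varepsilon$ together with a Gagliardo--Nirenberg interpolation anchored at $\|c_\varepsilon^{p/2}\|_{L^{2/p}}$ (controlled by the $L^1$-bound from Lemma~\ref{fvfgfflemma45}), and absorb via Young; and both test the $n_\varepsilon$-equation by $n_\varepsilon$, exploiting the key pointwise bound $n_\varepsilon F_\varepsilon(n_\varepsilon)|S_\varepsilon|\leq C_S$ granted by $\alpha\geq 1$, then close with the $L^2_tL^2_x$-control on $\nabla c_\varepsilon$ from Lemma~\ref{lemmaghjffggsjjjjjsddgghhmk4563025xxhjklojjkkk}. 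The only cosmetic difference is that you obtain $\|n_\varepsilon\|_{L^2}\leq C$ directly by H\"older from the $L^{2\alpha}$-bound in Lemma~\ref{lemmaghjffggsjjjjjsddgghhmk4563025xxhjklojjkkk}, whereas the paper rederives it as a byproduct of the $n_\varepsilon$-testing; your shortcut is cleaner, and the testing step is still needed for $\int_0^T\!\int_\Omega|\nabla n_\varepsilon|^2$ in any case.
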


\begin{proof}
Firstly,
multiplying  the first equation in $\dref{1.1fghyuisda}$ by $ n_{\varepsilon}$, in view of  \dref{1.ffggvbbnxxccvvn1}
 and  using $\nabla\cdot u_\varepsilon=0$, we derive
\begin{equation}
\begin{array}{rl}
&\disp{\frac{1}{{2}}\frac{d}{dt}\|{ n_{\varepsilon} }\|^{{{{2}}}}_{L^{{{2}}}(\Omega)}+
\int_{\Omega}  |\nabla n_{\varepsilon}|^2}\\
=&\disp{-
\int_{\Omega}  n_{\varepsilon}\nabla\cdot(n_{\varepsilon}F_{\varepsilon}(n_{\varepsilon})S_\varepsilon(x, n_{\varepsilon}, c_{\varepsilon})\cdot\nabla c_{\varepsilon})}\\
\leq&\disp{
\int_{\Omega}  n_{\varepsilon}F_{\varepsilon}(n_{\varepsilon})|S_\varepsilon(x, n_{\varepsilon}, c_{\varepsilon})||\nabla n_{\varepsilon}||\nabla c_{\varepsilon}|~\mbox{for all}~ t>0.}
\end{array}
\label{55hhjjcffghhhjkklddfgggffgglffghhhz2.5}
\end{equation}
Recalling \dref{x1.73142vghf48gg} and \dref{1.ffggvbbnxxccvvn1} and using $\alpha\geq1$, via the Young inequality, we derive
\begin{equation}
\begin{array}{rl}
&\disp\int_{\Omega} n_{\varepsilon}F_{\varepsilon}(n_{\varepsilon})|S_\varepsilon(x, n_{\varepsilon}, c_{\varepsilon})||\nabla n_{\varepsilon}||\nabla c_{\varepsilon}|\\
\leq&\disp{C_S\int_{\Omega} |\nabla n_{\varepsilon}||\nabla c_{\varepsilon}|}\\
\leq&\disp{\frac{1}{2}\int_{\Omega} |\nabla n_{\varepsilon}|^2+\frac{C_S^2}{2}\int_{\Omega} |\nabla c_{\varepsilon}|^2~\mbox{for all}~ t>0.}
\end{array}
\label{55hhjjcffghhhjkkllfffghggggghgghjjjhfghhhz2.5}
\end{equation}
Here we have used the fact that
$$ n_{\varepsilon}F_{\varepsilon}(n_{\varepsilon})|S_\varepsilon(x, n_{\varepsilon}, c_{\varepsilon})|\leq C_S n_{\varepsilon}(1 + n_{\varepsilon})^{-\alpha}\leq C_S$$
by using \dref{x1.73142vghf48gg} and $\alpha\geq1$.
Therefore, collecting  \dref{55hhjjcffghhhjkklddfgggffgglffghhhz2.5} and \dref{55hhjjcffghhhjkkllfffghggggghgghjjjhfghhhz2.5} and using \dref{bnmbncz2.5ghhjuyuivvbnnihjj}, we conclude that there exist positive constants $C_1$ and  $C_2$ such that
 \begin{equation}
\begin{array}{rl}
&\disp{\int_{\Omega} n_{\varepsilon}^{2}(x,t)\leq C_{1}~~\mbox{for all}~ t>0}\\
\end{array}
\label{czfvgb2.5ghhjuyucfkllllfhhhhhhhjjggcvviihjj}
\end{equation}
and
\begin{equation}
\begin{array}{rl}
&\disp{\int_{0}^{ T}\int_{\Omega}  |\nabla {n_{\varepsilon}}|^2\leq C_{2}(T+1)}\\
\end{array}
\label{bnmbncz2.5ghhjugghjllllljdfghhjjyuivvbnklllnihjj}
\end{equation}
for all $T>0$.
Let $p>2$. Testing the second equation in $\dref{1.1fghyuisda}$ by ${c^{p-1}_{\varepsilon}}$, using the fact $\nabla\cdot u_{\varepsilon}=0$,
and applying the H\"{o}lder inequality, we have
\begin{equation}
\begin{array}{rl}
&\disp{\frac{1}{p}\frac{d}{dt}\int_{\Omega}c^{{{p}}}_{\varepsilon}+({{p}-1})
\int_{\Omega}c^{{{p}-2}}_{\varepsilon}|\nabla c_{\varepsilon}|^2+\int_{\Omega}c^{{{p}}}_{\varepsilon}}
\\
=&\disp{\int_\Omega c^{p-1}_{\varepsilon}n_{\varepsilon}}
\\
\leq&\disp{\|n_{\varepsilon}(\cdot,t)\|_{L^2(\Omega)}
\left(\int_\Omega c^{2(p-1)}_{\varepsilon}\right)^{\frac{{1}}{2}}}\\
\leq&\disp{C_1^{\frac{1}{2}}
\left(\int_\Omega c^{2(p-1)}_{\varepsilon}\right)^{\frac{{1}}{2}}
~~\mbox{for all}~~t>0}\\
\end{array}
\label{3333cz2.5114114}
\end{equation}
by using \dref{czfvgb2.5ghhjuyucfkllllfhhhhhhhjjggcvviihjj}.
Then the Gagliardo--Nirenberg inequality and \dref{2344ddfgczhhhh2.5ghju48cfg924ghyuji} enables us to see that
$$
\begin{array}{rl}
&\disp\left(\int_\Omega c^{2(p-1)}_{\varepsilon}\right)^{\frac{{1}}{2}}
\\
=&\disp{\|  c^{\frac{p}{2}}_{\varepsilon}\|^{\frac{2(p-1)}{p}}_{L^{\frac{4(p-1)}{p}}(\Omega)}}
\\
\leq&\disp{C_{2}(\|\nabla c^{\frac{p}{2}}_{\varepsilon}\|_{L^2(\Omega)}^{\frac{6(p-2)}{3p-2}}\| c^{\frac{p}{2}}_{\varepsilon}\|_{L^{\frac{2}{p}}(\Omega)}^{\frac{2(p-1)}{p}-\frac{6(p-2)}{(3p-2)}}
+\|c^{\frac{p}{2}}_{\varepsilon}\|_{L^\frac{2}{p}(\Omega)}^{\frac{2(p-1)}{p}})}\\
\leq&\disp{C_{3}(\|\nabla c^{\frac{p}{2}}_{\varepsilon}\|_{L^2(\Omega)}^{\frac{6(p-2)}{3p-2}}+1)}~~\mbox{for all}~~t>0
\end{array}
$$
and some positive constants  $C_2$ and $C_3$.
Here we use the Young inequality to obtain  that there is $C_4>0$ such that
\begin{equation}
\begin{array}{rl}
&\disp{\frac{1}{p}\frac{d}{dt}\int_{\Omega}c^{{{p}}}_{\varepsilon}+({{p}-1})\int_{\Omega}c^{{{p}-2}}_{\varepsilon}|\nabla c_{\varepsilon}|^2+\int_{\Omega}c^{{{p}}}_{\varepsilon}}
\\
\leq&\disp{\frac{({{p}-1})}{2}\int_{\Omega}c^{{{p}-2}}_{\varepsilon}|\nabla c_{\varepsilon}|^2
+C_4~~\mbox{for all}~~t>0.}\\
\end{array}
\label{3333cz2.5ssdffggg114114}
\end{equation}
Whereupon, our conclusion comes from \dref{czfvgb2.5ghhjuyucfkllllfhhhhhhhjjggcvviihjj}
 as well as  \dref{bnmbncz2.5ghhjugghjllllljdfghhjjyuivvbnklllnihjj} and \dref{3333cz2.5ssdffggg114114} by using
some basic calculation. 
\end{proof}

By interpolation and the embedding theorem, the estimates from Lemma  \ref{aasslemmafggg78630jklhhjj} and Lemma \ref{lemmaghjffggsjjjjjsddgghhmk4563025xxhjklojjkkk} imply bounds for further spatio-temporal integrals.
\begin{lemma}\label{lemmddaghjsffggggsddgghhmk4563025xxhjklojjkkk}
Let $\alpha\geq1$. Then there exists $C>0$ independent of $\varepsilon$ such that, for each $T>0$, the solution of \dref{1.1fghyuisda} satisfies
\begin{equation}
\begin{array}{rl}
&\disp{\int_{0}^T\int_{\Omega}\left[ |u_{\varepsilon}|^{\frac{10}{3}}+n_{\varepsilon}^{\frac{6+4\alpha}{3}}\right]dt+\int_{0}^T\|u_{\varepsilon}(\cdot,t)\|^2_{L^6(\Omega)}dt\leq C(T+1)}\\
\end{array}
\label{bnmbncz2.ffgddffffhh5ghhjuyuivvbnnihjj}
\end{equation}
as well as
\begin{equation}
 \begin{array}{ll}
  \disp\int_0^T\int_{\Omega}\left[|n_\varepsilon F_{\varepsilon}(n_{\varepsilon})S_\varepsilon(x, n_{\varepsilon}, c_{\varepsilon})\nabla c_\varepsilon|^{2}+|n_\varepsilon u_\varepsilon|^{\frac{5(3+2\alpha)}{3(4+\alpha)}}\right] dt\leq C(T+1)\\
   \end{array}\label{1.1dddgghhjfgbhnjmdfgeddvbnhhjjjmklllhyussddisda}
\end{equation}
and
\begin{equation}
 \begin{array}{ll}
  \disp\int_0^T\int_{\Omega}\left[|u_\varepsilon\cdot\nabla c_\varepsilon|^{\frac{5}{4}} +|c_\varepsilon u_\varepsilon |^{q}\right]dt  \leq C(T+1),\\
   \end{array}\label{1.1dddfgbhnjmdfgeddvbnmklllhyussdddfgggdisda}
\end{equation}
where $q<\frac{10}{3}$.
\end{lemma}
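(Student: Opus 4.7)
The plan is to derive all six space--time bounds directly from the $\varepsilon$-independent estimates already established in Lemma \ref{lemmaghjffggsjjjjjsddgghhmk4563025xxhjklojjkkk} and Lemma \ref{aasslemmafggg78630jklhhjj}. The inputs I would use are: the uniform $L^\infty(0,T;L^{2\alpha}(\Omega))$-bound on $n_\varepsilon$, the bound $\int_0^T\int_\Omega|\nabla n_\varepsilon|^2\leq C(T+1)$, the uniform $L^2$-bound on $u_\varepsilon$ together with $\int_0^T\int_\Omega|\nabla u_\varepsilon|^2\leq C(T+1)$ and $\int_0^T\int_\Omega|\nabla c_\varepsilon|^2\leq C(T+1)$, plus the fact that $c_\varepsilon$ is bounded in $L^\infty(0,T;L^p(\Omega))$ for every finite $p$. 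From there, everything will be a matter of Gagliardo--Nirenberg interpolation combined with H\"older's inequality.

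I would first handle the two bounds on $u_\varepsilon$. The Sobolev embedding $W^{1,2}(\Omega)\hookrightarrow L^6(\Omega)$ immediately yields the $L^2_tL^6_x$ bound. For the $L^{10/3}$ bound, the standard three-dimensional Gagliardo--Nirenberg inequality
$$\|u_\varepsilon\|_{L^{10/3}(\Omega)}^{10/3}\leq C\bigl(\|\nabla u_\varepsilon\|_{L^2(\Omega)}^2\|u_\varepsilon\|_{L^2(\Omega)}^{4/3}+\|u_\varepsilon\|_{L^2(\Omega)}^{10/3}\bigr)$$
integrated in time together with the uniform $L^2$-bound closes the estimate.

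Next I would treat $n_\varepsilon$ by working with $v_\varepsilon:=n_\varepsilon^\alpha$. Lemma \ref{lemmaghjffggsjjjjjsddgghhmk4563025xxhjklojjkkk} supplies $\|v_\varepsilon\|_{L^\infty(0,T;L^2)}\leq C$ and $\int_0^T\|\nabla v_\varepsilon\|_{L^2}^2\leq C(T+1)$, so the same Gagliardo--Nirenberg scheme gives $\int_0^T\int_\Omega n_\varepsilon^{10\alpha/3}\leq C(T+1)$. Since $\alpha\geq1$ entails $\frac{6+4\alpha}{3}\leq\frac{10\alpha}{3}$, H\"older on the finite-measure set $\Omega\times(0,T)$ upgrades this to the required bound on $n_\varepsilon^{(6+4\alpha)/3}$.

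The tensor flux term is where the hypothesis $\alpha\geq1$ is used decisively: from \dref{x1.73142vghf48gg} and \dref{1.ffggvbbnxxccvvn1}, $n_\varepsilon F_\varepsilon(n_\varepsilon)|S_\varepsilon|\leq C_S n_\varepsilon(1+n_\varepsilon)^{-\alpha}\leq C_S$, so the integrand is pointwise dominated by $C_S^2|\nabla c_\varepsilon|^2$, reducing the claim to the known space--time $L^2$-bound on $\nabla c_\varepsilon$. The remaining three bounds are pure H\"older bookkeeping: pairing $n_\varepsilon\in L^{(6+4\alpha)/3}_{t,x}$ with $u_\varepsilon\in L^{10/3}_{t,x}$ through $\frac{3}{6+4\alpha}+\frac{3}{10}=\frac{6(4+\alpha)}{5(6+4\alpha)}$ produces precisely the exponent $s=\frac{5(3+2\alpha)}{3(4+\alpha)}$ advertised for $n_\varepsilon u_\varepsilon$; writing $\frac{4}{5}=\frac{3}{10}+\frac{1}{2}$ matches $u_\varepsilon\in L^{10/3}_{t,x}$ with $\nabla c_\varepsilon\in L^2_{t,x}$ to yield the $L^{5/4}$ bound on $u_\varepsilon\cdot\nabla c_\varepsilon$; and for $c_\varepsilon u_\varepsilon$ at any $q<10/3$, I would apply H\"older in space against $\|c_\varepsilon\|_{L^r}$ with $r$ sufficiently large (available for every finite $r$ by Lemma \ref{aasslemmafggg78630jklhhjj}), followed by H\"older in time against $u_\varepsilon\in L^{10/3}_{t,x}$. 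I do not foresee a genuine obstacle; the only delicate point is the consistent matching of exponents, and in particular verifying that $\alpha\geq 1$ is exactly what is needed both to absorb the sensitivity factor $n_\varepsilon F_\varepsilon(n_\varepsilon)S_\varepsilon$ into a bounded multiplier of $\nabla c_\varepsilon$ and to guarantee $\frac{6+4\alpha}{3}\leq\frac{10\alpha}{3}$ in the interpolation for $n_\varepsilon$.
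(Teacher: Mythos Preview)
Your proposal is correct and follows essentially the same strategy as the paper: Gagliardo--Nirenberg interpolation for the $u_\varepsilon$ and $n_\varepsilon$ space--time bounds, the pointwise estimate $n_\varepsilon F_\varepsilon(n_\varepsilon)|S_\varepsilon|\leq C_S$ (valid precisely when $\alpha\geq 1$) for the flux term, and H\"older pairings for the three mixed products. The only cosmetic difference is in the $n_\varepsilon^{(6+4\alpha)/3}$ estimate: the paper interpolates directly between $\nabla n_\varepsilon\in L^2$ (from Lemma~\ref{aasslemmafggg78630jklhhjj}) and $n_\varepsilon\in L^{2\alpha}$ (from Lemma~\ref{lemmaghjffggsjjjjjsddgghhmk4563025xxhjklojjkkk}) to land exactly on the exponent $(6+4\alpha)/3$, whereas you interpolate $n_\varepsilon^\alpha$ between $L^\infty_tL^2_x$ and $L^2_tH^1_x$ to obtain the stronger bound on $n_\varepsilon^{10\alpha/3}$ and then H\"older down; both routes are valid and the difference is immaterial.
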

\begin{proof}
From Lemma \ref{lemmaghjffggsjjjjjsddgghhmk4563025xxhjklojjkkk} and
\dref{x1.73142vghf48gg},
in light of the Gagliardo--Nirenberg inequality, we derive that there exist positive constants $C_{1}$, $C_{2}$, $C_{3}$, $C_{4}$, $C_{5}$ and $C_{6}$  such that
\begin{equation}
\begin{array}{rl}
\disp\int_{0}^T\disp\int_{\Omega}  n_{\varepsilon}^{\frac{6+4\alpha}{3}}dt \leq&\disp{C_{1}\int_{0}^T\left(\| \nabla{ n_{\varepsilon}}\|^{2}_{L^{2}(\Omega)}\|{ n_{\varepsilon} }\|^{{\frac{4\alpha}{3}}}_{L^{2\alpha}(\Omega)}+
\|{  n_{\varepsilon} }\|^{\frac{6+4\alpha}{3}}_{L^{2\alpha}(\Omega)}\right)dt}\\
\leq&\disp{C_{2}(T+1)~\mbox{for all}~ T > 0}\\
\end{array}
\label{ddffbnmbnddfgffddfghhggjjkkuuiicz2ddfvgbhh.htt678ddfghhhyuiihjj}
\end{equation}
as well as
\begin{equation}
\begin{array}{rl}
\disp\int_{0}^T\disp\int_{\Omega} |u_{\varepsilon}|^{\frac{10}{3}}dt \leq&\disp{C_{3}\int_{0}^T\left(\| \nabla{ u_{\varepsilon}}\|^{2}_{L^{2}(\Omega)}\|{ u_{\varepsilon}}\|^{{\frac{4}{3}}}_{L^{2}(\Omega)}+
\|{ u_{\varepsilon}}\|^{{\frac{10}{3}}}_{L^{2}(\Omega)}\right)dt}\\
\leq&\disp{C_{4}(T+1)~\mbox{for all}~ T > 0}\\
\end{array}
\label{ddffbnmbnddfgffggssssjjkkuuiicz2dvgbhh.t8ddhhhyuiihjj}
\end{equation}
and
\begin{equation}
\begin{array}{rl}
\disp\int_{0}^T\disp\|u_{\varepsilon}(\cdot,t)\|^{2}_{L^6(\Omega)} dt\leq&\disp{C_{5}\int_{0}^T\| \nabla{ u_{\varepsilon}}(\cdot,t)\|^{2}_{L^{2}(\Omega)}dt}\\
\leq&\disp{C_{6}(T+1)~\mbox{for all}~ T > 0,}\\
\end{array}
\label{ddffbnmbnddfgffggjjkkuuiicz2dvgbhh.t8ddhhhyuiihjj}
\end{equation}
where the last inequality we have used the  embedding $W^{1,2}_{0,\sigma} (\Omega) \hookrightarrow L^6 (\Omega)$ and the Poincar\'{e}
inequality.
Now, in light of \dref{x1.73142vghf48gg}, applying \dref{1.ffggvbbnxxccvvn1}, \dref{ddffbnmbnddfgffddfghhggjjkkuuiicz2ddfvgbhh.htt678ddfghhhyuiihjj}  and Lemma \ref{lemmaghjffggsjjjjjsddgghhmk4563025xxhjklojjkkk}, one can derive from the H\"{o}lder inequality that  for some positive constant  $C_{7}$ 
such that
\begin{equation}
 \begin{array}{ll}
  &\disp\int_0^T\int_{\Omega}\left[|n_\varepsilon F_{\varepsilon}(n_{\varepsilon})S_\varepsilon(x, n_{\varepsilon}, c_{\varepsilon})\nabla c_\varepsilon|^{2}+|u_\varepsilon\cdot\nabla c_\varepsilon|^{\frac{5}{4}}+|n_\varepsilon u_\varepsilon|^{\frac{5(3+2\alpha)}{3(4+\alpha)}}\right]\\
  \leq&\disp C_S\int_0^T\int_{\Omega}n_\varepsilon^2 (1+n_{\varepsilon})^{-2\alpha}|\nabla c_\varepsilon|^{2}\\
  &+\disp\left(\int_0^T\int_{\Omega}|\nabla c_\varepsilon|^{2}\right)^{\frac{5}{8}} \left(\int_0^T\int_{\Omega}|u_\varepsilon|^{\frac{10}{3}}\right)^{\frac{3}{8}}+\left(\int_0^T\int_{\Omega}n_\varepsilon^{\frac{6+4\alpha}{3}}\right)^{\frac{5}{2(4+\alpha)}} \left(\int_0^T\int_{\Omega}|u_\varepsilon|^{\frac{10}{3}}\right)^{\frac{3+2\alpha}{2(4+\alpha)}}\\
    \leq&\disp C_S\int_0^T\int_{\Omega}|\nabla c_\varepsilon|^{2}+\left(\int_0^T\int_{\Omega}|\nabla c_\varepsilon|^{2}\right)^{\frac{5}{8}} \left(\int_0^T\int_{\Omega}|u_\varepsilon|^{\frac{10}{3}}\right)^{\frac{3}{8}}\\
    &+\left(\disp\int_0^T\disp\int_{\Omega}n_\varepsilon^{\frac{6+4\alpha}{3}}\right)^{\frac{5}{2(4+\alpha)}} \left(\disp\int_0^T\disp\int_{\Omega}|u_\varepsilon|^{\frac{10}{3}}\right)^{\frac{3+2\alpha}{2(4+\alpha)}}\\
   \leq& C_{7}(T+1)~~\mbox{for all}~~ T > 0\\
   \end{array}\label{1.1dddgghhjfssdddgbhnjmdfgeddvbnhhjjjmklllhyussddisda}
\end{equation}
by using  $\alpha\geq1$, where $C_S$ is the same as   \dref{x1.73142vghf48gg}.

Next, for any $q<\frac{10}{3}$, by using \dref{3.10gghhjuuloollgghhhyhh}, one can find $C_{8}>0$ independent of $\varepsilon$ such that
\begin{equation}
\|c_{\varepsilon}(\cdot, t)\|_{L^{\frac{10q}{10-3q}}(\Omega)}\leq C_8~~ \mbox{for all}~~ t>0,
\label{3.10gghhjuuloollssddddgghhhyhh}
\end{equation}
so that, combined with \dref{ddffbnmbnddfgffggssssjjkkuuiicz2dvgbhh.t8ddhhhyuiihjj} yields to
\begin{equation}
 \begin{array}{ll}
  &\disp\int_0^T\int_{\Omega}\left[ |c_\varepsilon u_\varepsilon |^{q}\right]\\
  \leq&\disp\left(\int_0^T\int_{\Omega}c_\varepsilon^{\frac{10}{3}}\right)^{\frac{10-3q}{10}} \left(\int_0^T\int_{\Omega}|u_\varepsilon|^{\frac{10}{3}}\right)^{\frac{3q}{10}}\\
   \leq& C_{9}(T+1)~~\mbox{for all}~~ T > 0\\
   \end{array}\label{1.1dddgghhjfssssdddddgbhnjmdfgeddvbnhhjjjmklllhyussddisda}
\end{equation}
with  some positive constant $C_9$ independent of $\varepsilon$.
 Therefore a combination of \dref{ddffbnmbnddfgffddfghhggjjkkuuiicz2ddfvgbhh.htt678ddfghhhyuiihjj}--\dref{1.1dddgghhjfssssdddddgbhnjmdfgeddvbnhhjjjmklllhyussddisda} directly implies this lemma.
\end{proof}

\section{Regularity properties of time derivatives}

 To construct a weak solution of
problem \dref{33dfff4451.1fghyuisda}--\dref{33dfff44sdfff51.1fgsdddfhyuisda} by a limit process from the regularized problems \dref{1.1fghyuisda}, we shall rely
on 
%
%
the following regularity property with respect to the time variable.

\begin{lemma}\label{qqqqlemma45630hhuujjuuyytt}
Let $\alpha\geq1$,
and assume that \dref{dd1.1fghyuisdakkkllljjjkk} and \dref{ccvvx1.731426677gg}
 hold.
Then there exists $C>0$ independent of $\varepsilon$ such that
\begin{equation}
 \begin{array}{ll}
\disp\int_0^T\|\partial_tn_\varepsilon(\cdot,t)\|_{(W^{1,\frac{5}{2}}(\Omega))^*}^{\frac{5}{3}}dt  \leq C(T+1)~\mbox{for all}~ T\in(0,\infty)\\
   \end{array}\label{1.1ddfgeddvbnmklllhyuisda}
\end{equation}
as well as
\begin{equation}
 \begin{array}{ll}
  \disp\int_0^T\|\partial_tc_\varepsilon(\cdot,t)\|_{(W^{1,2}(\Omega))^*}^{2}dt  \leq C(T+1)~\mbox{for all}~ T\in(0,\infty)\\
   \end{array}\label{wwwwwqqqq1.1dddfgbhnjmdfgeddvbnmklllhyussddisda}
\end{equation}
and
\begin{equation}
 \begin{array}{ll}
  \disp\int_0^T\|\partial_tu_\varepsilon(\cdot,t)\|_{(W^{1,5}_{0,\sigma}(\Omega))^*}^{\frac{5}{4}}dt  \leq C(T+1)~\mbox{for all}~ T\in(0,\infty).\\
   \end{array}\label{wwwwwqqqq1.1dddfgkkllbhddffgggnjmdfgeddvbnmklllhyussddisda}
\end{equation}
\end{lemma}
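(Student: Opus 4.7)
The approach is the standard duality strategy. For each of the three equations in \dref{1.1fghyuisda} I would test against a smooth $\varphi$ lying in the Sobolev space dual to the one appearing in the statement, use $\nabla\cdot u_\varepsilon=0$ to rewrite the convection as a pure divergence, and then integrate all spatial derivatives by parts onto $\varphi$. H\"{o}lder's inequality will then bound $\|\partial_t(\cdot)\|_{(\cdot)^*}$ in terms of spatial $L^p$-norms of the solution, and raising to the appropriate power and integrating in time produces the right-hand sides thanks to the spacetime bounds already collected in Lemmas \ref{lemmaghjffggsjjjjjsddgghhmk4563025xxhjklojjkkk}--\ref{lemmddaghjsffggggsddgghhmk4563025xxhjklojjkkk}.

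Concretely, for the first equation of \dref{1.1fghyuisda} with $\varphi\in W^{1,5/2}(\Omega)$, integration by parts yields, pointwise in $t$,
\[
\Bigl|\int_\Omega \partial_t n_\varepsilon\,\varphi\Bigr| \leq \bigl(\|\nabla n_\varepsilon\|_{L^{5/3}(\Omega)} + \|n_\varepsilon F_\varepsilon(n_\varepsilon) S_\varepsilon(x,n_\varepsilon,c_\varepsilon)\nabla c_\varepsilon\|_{L^{5/3}(\Omega)} + \|n_\varepsilon u_\varepsilon\|_{L^{5/3}(\Omega)}\bigr)\|\varphi\|_{W^{1,5/2}(\Omega)}.
\]
After raising to $\frac{5}{3}$ and integrating in $t$, the first summand is controlled by the embedding $L^2\hookrightarrow L^{5/3}$ together with \dref{bnmbncz2.5ghhjugghjllllljdfghhjjyuivssdddvbnklllnihjj}; the second by the spacetime $L^2$-bound in \dref{1.1dddgghhjfgbhnjmdfgeddvbnhhjjjmklllhyussddisda} combined with H\"{o}lder in $t$; and the third directly from \dref{1.1dddgghhjfgbhnjmdfgeddvbnhhjjjmklllhyussddisda}, noting that the exponent $\frac{5(3+2\alpha)}{3(4+\alpha)}$ there is $\geq\frac{5}{3}$ exactly when $\alpha\geq 1$---this is the point at which the standing hypothesis enters the proof. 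For the second equation with $\varphi\in W^{1,2}(\Omega)$, the analogous argument yields an upper bound in terms of the $L^2(\Omega)$-norms of $\nabla c_\varepsilon$, $c_\varepsilon$, $n_\varepsilon$ and $c_\varepsilon u_\varepsilon$, each of which is controlled by \dref{3.10gghhjuuloollgghhhyhh}, \dref{bnmbncz2.5ghhjugghjllllljdfghhjjyuivssdddvbnklllnihjj} and \dref{1.1dddfgbhnjmdfgeddvbnmklllhyussdddfgggdisda} with $q=2<\frac{10}{3}$.

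For the third equation I would restrict $\varphi$ to $W^{1,5}_{0,\sigma}(\Omega)$ so that the pressure gradient drops out upon testing, integrate the viscous and the Navier convective terms by parts (using $\nabla\cdot Y_\varepsilon u_\varepsilon=0$, inherited from $\nabla\cdot u_\varepsilon=0$ and the fact that the Stokes resolvent preserves solenoidality), and estimate by $\|\nabla u_\varepsilon\|_{L^{5/4}(\Omega)}+|\kappa|\,\|u_\varepsilon\otimes Y_\varepsilon u_\varepsilon\|_{L^{5/4}(\Omega)}+\|\nabla\phi\|_{L^\infty(\Omega)}\|n_\varepsilon\|_{L^{5/4}(\Omega)}$. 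Raising to $\frac{5}{4}$ and integrating, the first and third pieces are handled by the spacetime $L^2$-bound on $\nabla u_\varepsilon$ from \dref{bnmbncz2.5ghhjuyuivvbnnihjj} and by \dref{3.10gghhjuuloollgghhhyhh} together with \dref{dd1.1fghyuisdakkkllljjjkk}; for the convective term I would use $\|fg\|_{L^{5/4}}\leq\|f\|_{L^{5/2}}\|g\|_{L^{5/2}}$, the uniform $L^p$-boundedness of $Y_\varepsilon$ on $L^2_\sigma(\Omega)$, and the spacetime $L^{10/3}$-bound on $u_\varepsilon$ from \dref{bnmbncz2.ffgddffffhh5ghhjuyuivvbnnihjj} via $L^{10/3}(\Omega)\hookrightarrow L^{5/2}(\Omega)$.

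The main obstacle is making the three dual exponents $5/2$, $2$ and $5$ match the available spacetime integrability of the convective products $n_\varepsilon u_\varepsilon$, $c_\varepsilon u_\varepsilon$ and $u_\varepsilon\otimes u_\varepsilon$ as well as that of the chemotactic flux $n_\varepsilon F_\varepsilon(n_\varepsilon) S_\varepsilon\nabla c_\varepsilon$. The tightest of these constraints is the $n_\varepsilon u_\varepsilon$-bound from \dref{1.1dddgghhjfgbhnjmdfgeddvbnhhjjjmklllhyussddisda}, whose exponent degenerates precisely to $\frac{5}{3}$ at the critical value $\alpha=1$; any weakening of the assumption on $\alpha$ would fail to close \dref{1.1ddfgeddvbnmklllhyuisda}.
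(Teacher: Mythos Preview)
Your proposal is correct and follows the same duality strategy as the paper: test each equation against a smooth $\varphi$ in the appropriate Sobolev space, integrate by parts, apply H\"{o}lder, and close using the spacetime bounds of Lemmas \ref{lemmaghjffggsjjjjjsddgghhmk4563025xxhjklojjkkk}--\ref{lemmddaghjsffggggsddgghhmk4563025xxhjklojjkkk}. Your treatments of $\partial_t n_\varepsilon$ and $\partial_t c_\varepsilon$ are essentially identical to the paper's (the paper splits $\|c_\varepsilon u_\varepsilon\|_{L^2}^2$ via H\"{o}lder into $\int c_\varepsilon^5$ and $\int |u_\varepsilon|^{10/3}$ rather than invoking \dref{1.1dddfgbhnjmdfgeddvbnmklllhyussdddfgggdisda} directly, but this is cosmetic).

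The only genuine difference is in the fluid estimate: the paper in fact argues for the stronger bound $\int_0^T\|\partial_t u_\varepsilon\|_{(W^{1,2}_{0,\sigma})^*}^2\leq C(T+1)$ and then observes that this implies \dref{wwwwwqqqq1.1dddfgkkllbhddffgggnjmdfgeddvbnmklllhyussddisda} via the embeddings $W^{1,5}_{0,\sigma}\hookrightarrow W^{1,2}_{0,\sigma}$ and $L^2(0,T)\hookrightarrow L^{5/4}(0,T)$. Your route---targeting the $(W^{1,5}_{0,\sigma})^*$ norm directly and bounding $\|Y_\varepsilon u_\varepsilon\otimes u_\varepsilon\|_{L^{5/4}}$ through $L^{5/2}\times L^{5/2}$ and $L^{10/3}\hookrightarrow L^{5/2}$---is cleaner and avoids the somewhat opaque Young splitting of $\int|Y_\varepsilon u_\varepsilon\otimes u_\varepsilon|^2$ that the paper uses; both arrive at the stated conclusion.
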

\begin{proof}
Firstly, testing the first equation of \dref{1.1fghyuisda} by certain $\varphi\in C^{\infty}(\bar{\Omega})$, we have
$$
\begin{array}{rl}
&\disp\left|\int_{\Omega}(n_{\varepsilon,t})\varphi\right|\\
 =&\disp{\left|\int_{\Omega}\left[\Delta n_{\varepsilon}-\nabla\cdot(n_{\varepsilon}F_{\varepsilon}(n_{\varepsilon})S_\varepsilon(x, n_{\varepsilon}, c_{\varepsilon})\nabla c_{\varepsilon})-u_{\varepsilon}\cdot\nabla n_{\varepsilon}\right]\varphi\right|}
\\
=&\disp{\left|\int_\Omega \left[-\nabla n_{\varepsilon}\cdot\nabla\varphi+n_{\varepsilon}F_{\varepsilon}(n_{\varepsilon})S_\varepsilon(x, n_{\varepsilon}, c_{\varepsilon})\nabla c_{\varepsilon}\cdot\nabla\varphi+ n_{\varepsilon}u_{\varepsilon}\cdot\nabla  \varphi\right]\right|}\\
\leq&\disp{\left|\int_\Omega \left[\|\nabla n_{\varepsilon}\|_{L^{\frac{5}{3}}(\Omega)}+\|n_{\varepsilon}F_{\varepsilon}(n_{\varepsilon})S_\varepsilon(x, n_{\varepsilon}, c_{\varepsilon})\nabla c_{\varepsilon}\|_{L^{\frac{5}{3}}(\Omega)}+ \|n_{\varepsilon}u_{\varepsilon}\|_{L^{\frac{5}{3}}(\Omega)}\right]\right|\|\varphi\|_{W^{1,\frac{5}{2}}(\Omega)}}
\end{array}
$$
for all $t>0$.

Therefore,
 in view of $\alpha\geq1$, Lemma \ref{lemmaghjffggsjjjjjsddgghhmk4563025xxhjklojjkkk} and Lemma \ref{lemmddaghjsffggggsddgghhmk4563025xxhjklojjkkk},
we deduce from the Young inequality that for some $C_1>0$ and $C_2>0$ such that
\begin{equation}
\begin{array}{rl}
&\disp\int_0^T\|\partial_{t}n_{\varepsilon}(\cdot,t)\|_{(W^{1,\frac{5}{2}}(\Omega))^*}^{\frac{5}{3}}dt\\
\leq&\disp{C_1\left\{\int_0^T\int_{\Omega} |\nabla n_{\varepsilon}|^{2}+\int_0^T\int_{\Omega}|n_\varepsilon F_{\varepsilon}(n_{\varepsilon})S_\varepsilon(x, n_{\varepsilon}, c_{\varepsilon})\nabla c_\varepsilon|^{2}+\int_0^T\int_{\Omega}|n_\varepsilon u_\varepsilon|^{\frac{5}{3}}+T\right\}
}\\
\leq&\disp{C_2(T+1)~~\mbox{for all}~~ T > 0.
}\\
\end{array}
\label{yyygbhncvbmdcxxcdfvgbfvgcz2.5ghju48}
\end{equation}

Likewise, given any $\varphi\in  C^\infty(\bar{\Omega})$, we may test the second equation in \dref{1.1fghyuisda} against $\varphi$ to conclude that
$$
\begin{array}{rl}
\disp\left|\int_{\Omega}\partial_{t}c_{\varepsilon}(\cdot,t)\varphi\right|=&\disp{\left|\int_{\Omega}\left[\Delta c_{\varepsilon}-c_{\varepsilon}+n_{\varepsilon}-u_{\varepsilon}\cdot\nabla c_{\varepsilon}\right]\cdot\varphi\right|}
\\
=&\disp{\left|-\int_\Omega \nabla c_{\varepsilon}\cdot\nabla\varphi-\int_\Omega  c_{\varepsilon}\varphi+\int_\Omega n_{\varepsilon} \varphi+\int_\Omega c_{\varepsilon}u_\varepsilon\cdot\nabla\varphi\right|}\\
\leq&\disp{\left\{\|\nabla c_{\varepsilon}\|_{L^{{2}}(\Omega)}+\|c_{\varepsilon} \|_{L^{2}(\Omega)}+\|n_{\varepsilon} \|_{L^{2}(\Omega)}+\|c_{\varepsilon}u_\varepsilon\|_{L^{2}(\Omega)}\right\}\|\varphi\|_{W^{1,2}(\Omega)}}\\
\end{array}
$$
for all $t>0$.
Thus, from Lemma \ref{lemmddaghjsffggggsddgghhmk4563025xxhjklojjkkk} and Lemma \ref{lemmaghjffggsjjjjjsddgghhmk4563025xxhjklojjkkk} again, 
we invoke the Young inequality again and obtain that there exist positive constants $C_{3}$ and $C_{4}$ such that
$$
\begin{array}{rl}
&\disp\int_0^T\|\partial_{t}c_{\varepsilon}(\cdot,t)\|_{(W^{1,2}(\Omega))^*}^{2}dt\\
\leq&\disp{C_{3}\left(\int_0^T\int_\Omega|\nabla c_{\varepsilon}|^{2}+\int_0^T\int_\Omega n_{\varepsilon}^{2}+\int_0^T\int_\Omega c_\varepsilon^{5}+\int_0^T\int_\Omega |u_\varepsilon|^{\frac{10}{3}}+T\right)}\\
\leq&\disp{C_{4}(T+1)
~~\mbox{for all}~~ T>0.}\\
\end{array}
$$

Finally, for any given  $\varphi\in C^{\infty}_{0,\sigma} (\Omega;\mathbb{R}^3)$, we infer from the third equation in \dref{1.1fghyuisda} that
$$
\begin{array}{rl}
\disp\left|\int_{\Omega}\partial_{t}u_{\varepsilon}(\cdot,t)\varphi\right|=&\disp{\left|-\int_\Omega \nabla u_{\varepsilon}\cdot\nabla\varphi-\kappa\int_\Omega (Y_{\varepsilon}u_{\varepsilon}\otimes u_{\varepsilon})\cdot\nabla\varphi+\int_\Omega n_{\varepsilon}\nabla \phi\cdot\varphi\right|
~~\mbox{for all}~~ t>0.}
\end{array}
$$
Now,   Lemma \ref{lemmaghjffggsjjjjjsddgghhmk4563025xxhjklojjkkk}, Lemma \ref{lemmddaghjsffggggsddgghhmk4563025xxhjklojjkkk}  and \dref{dd1.1fghyuisdakkkllljjjkk}, we also derive   that there exist positive constants
 $C_{5}$ as well as $C_{6}$ and $C_{7}$ such that
$$
\begin{array}{rl}
&\disp\int_0^T\|\partial_{t}u_{\varepsilon}(\cdot,t)\|_{(W^{1,2}_{0,\sigma}(\Omega))^*}^{2}dt\\
\leq&\disp{C_{5}\left(\int_0^T\int_\Omega|\nabla u_{\varepsilon}|^{2}+\int_0^T\int_\Omega |Y_{\varepsilon}u_{\varepsilon}\otimes u_{\varepsilon}|^{2}+\int_0^T\int_\Omega n_\varepsilon^{2}\right)}\\
\leq&\disp{C_{6}\left(\int_0^T\int_\Omega|\nabla u_{\varepsilon}|^{2}+\int_0^T\int_\Omega |Y_{\varepsilon}u_\varepsilon|^{2}+\int_0^T\int_\Omega |u_\varepsilon|^{\frac{10}{3}}+\int_0^T\int_\Omega n_{\varepsilon}^{2}+T\right)}\\
\leq&\disp{C_{7}(T+1)
~~\mbox{for all}~~ T>0,}
\end{array}
$$
which implies \dref{wwwwwqqqq1.1dddfgkkllbhddffgggnjmdfgeddvbnmklllhyussddisda} and whereby the proof is completed. 
\end{proof}

\section{Passing to the Limit: Proof of Theorem  \ref{theorem3}}

In this section we discuss the convergence of approximate solutions and complete the proof of
Theorem \ref{theorem3}. 
%
%
%
%
%
To this end,
based on the above lemmas (see Lemma \ref{lemmaghjffggsjjjjjsddgghhmk4563025xxhjklojjkkk}, Lemma \ref{aasslemmafggg78630jklhhjj} as well as Lemma \ref{lemmddaghjsffggggsddgghhmk4563025xxhjklojjkkk} and Lemma \ref{qqqqlemma45630hhuujjuuyytt}), we can
take the limit of the approximated solution $(n_\varepsilon, c_\varepsilon, u_\varepsilon)$ as $\varepsilon\searrow0$ and show that the limit is a
global weak solution in the sense of the above definition (see Definition \ref{df1}).
%
\begin{lemma}\label{lesssmma45630223}
Let
 \dref{x1.73142vghf48rtgyhu}, \dref{x1.73142vghf48gg},
\dref{dd1.1fghyuisdakkkllljjjkk}, and \dref{ccvvx1.731426677gg}
 hold, and suppose that $\alpha\geq1.$
 There exists $(\varepsilon_j)_{j\in \mathbb{N}}\subset (0, 1)$ such that $\varepsilon_j\searrow 0$ as $j\rightarrow\infty$ and such that as $\varepsilon = \varepsilon_j\searrow 0$
we have
\begin{equation}
n_\varepsilon\rightarrow n ~\mbox{a.e.}~\mbox{in}~\Omega\times(0,\infty)~\mbox{and strongly in}~ L_{loc}^{2}(\bar{\Omega}\times[0,\infty))\label{zjscz2.5297x963ddfgh0ddfggg6662222tt3}
\end{equation}
\begin{equation}
n_\varepsilon\rightharpoonup n ~~\mbox{weakly in}~~ L_{loc}^{\frac{6+4\alpha}{3}}(\bar{\Omega}\times[0,\infty))\label{zjscz2.5297x9ssdd63ddfgh0ddfggg6662222tt3}
\end{equation}
\begin{equation}
\nabla n_\varepsilon\rightharpoonup \nabla n ~\mbox{weakly in}~ L_{loc}^{2}(\bar{\Omega}\times[0,\infty))
\label{zjscz2.5297x963ddfgh0ddgghjjfggg6662222tt3}
\end{equation}
\begin{equation}
c_\varepsilon\rightarrow c ~\mbox{strongly in}~ L^{2}_{loc}(\bar{\Omega}\times[0,\infty))~\mbox{and}~\mbox{a.e.}~\mbox{in}~\Omega\times(0,\infty),
 \label{zjscz2.fgghh5297x963ddfgh0ddfggg6662222tt3}
\end{equation}
\begin{equation}
\nabla c_\varepsilon^{\frac{p}{2}}\rightharpoonup \nabla c^{\frac{p}{2}} ~\mbox{weakly in}~ L^{2}_{loc}([0,\infty);L^2(\Omega))~~~\mbox{for all}~~~p>1,
 \label{zjscz2.fgghhkklll5ssddf297x963ddfgh0ddfggg6662222tt3}
\end{equation}
\begin{equation}
c_\varepsilon\rightharpoonup c ~\mbox{weakly * in}~ L^{\infty}_{loc}([0,\infty);L^p(\Omega))~~~\mbox{for all}~~~p>1,
 \label{zjscz2.fgghhkklll5297x963ddfgh0ddfggg6662222tt3}
\end{equation}
\begin{equation}
\nabla c_\varepsilon\rightarrow \nabla c ~\mbox{a.e.}~\mbox{in}~\Omega\times(0,\infty),
 \label{1.1ddhhyujiiifgghhhge666ccdf2345ddvbnmklllhyuisda}
\end{equation}
\begin{equation}
u_\varepsilon\rightarrow u~\mbox{strongly in}~ L_{loc}^2(\bar{\Omega}\times[0,\infty))~\mbox{and}~\mbox{a.e.}~\mbox{in}~\Omega\times(0,\infty),
 \label{zjscz2.5297x96302222t666t4}
\end{equation}
\begin{equation}
\nabla c_\varepsilon\rightharpoonup \nabla c~\begin{array}{ll}
 \mbox{weakly in}~ L_{loc}^{2}(\bar{\Omega}\times[0,\infty))
   \end{array}\label{1.1ddfgghhhge666ccdf2345ddvbnmklllhyuisda}
\end{equation}
as well as
\begin{equation}
 \nabla u_\varepsilon\rightharpoonup \nabla u ~\mbox{weakly in}~L^{2}_{loc}(\bar{\Omega}\times[0,\infty))
 \label{zjscz2.5297x96366602222tt4455}
\end{equation}
and
\begin{equation}
 u_\varepsilon\rightharpoonup u ~\mbox{weakly in}~L^{\frac{10}{3}}_{loc}(\bar{\Omega}\times[0,\infty))
 \label{zjscz2.5ffgtt297x96302266622tt4}
\end{equation}
 with some triple $(n, c, u)$ that is a global weak solution of \dref{33dfff4451.1fghyuisda}--\dref{33dfff44sdfff51.1fgsdddfhyuisda} in the sense of Definition \ref{df1}.
\end{lemma}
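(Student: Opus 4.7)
The plan is to combine the uniform-in-$\varepsilon$ a priori bounds collected in Sections~3 and~4 with an Aubin--Lions type compactness argument and then pass to the limit in each term of the weak formulation \eqref{eqx45xx12112ccgghh}--\eqref{eqx45xx12112ccgghhjjgghh}. First, from the $\varepsilon$-uniform bounds in Lemma~\ref{lemmaghjffggsjjjjjsddgghhmk4563025xxhjklojjkkk}, Lemma~\ref{aasslemmafggg78630jklhhjj} and Lemma~\ref{lemmddaghjsffggggsddgghhmk4563025xxhjklojjkkk}, together with reflexivity of $L^p$, one extracts by a standard diagonal procedure a subsequence $\varepsilon=\varepsilon_j\searrow 0$ along which the weak and weak-$\star$ convergences \eqref{zjscz2.5297x9ssdd63ddfgh0ddfggg6662222tt3}, \eqref{zjscz2.5297x963ddfgh0ddgghjjfggg6662222tt3}, \eqref{zjscz2.fgghhkklll5ssddf297x963ddfgh0ddfggg6662222tt3}, \eqref{zjscz2.fgghhkklll5297x963ddfgh0ddfggg6662222tt3}, \eqref{1.1ddfgghhhge666ccdf2345ddvbnmklllhyuisda}, \eqref{zjscz2.5297x96366602222tt4455} and \eqref{zjscz2.5ffgtt297x96302266622tt4} follow immediately to some triple $(n,c,u)$.

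Second, to upgrade to the strong/almost everywhere statements \eqref{zjscz2.5297x963ddfgh0ddfggg6662222tt3}, \eqref{zjscz2.fgghh5297x963ddfgh0ddfggg6662222tt3} and \eqref{zjscz2.5297x96302222t666t4}, I would apply the Aubin--Lions--Simon lemma with the triples
$$
W^{1,2}(\Omega)\hookrightarrow\hookrightarrow L^{2}(\Omega)\hookrightarrow \bigl(W^{1,\frac{5}{2}}(\Omega)\bigr)^*,\qquad W^{1,2}(\Omega)\hookrightarrow\hookrightarrow L^{2}(\Omega)\hookrightarrow \bigl(W^{1,2}(\Omega)\bigr)^*,
$$
and $W^{1,2}_{0,\sigma}(\Omega)\hookrightarrow\hookrightarrow L^{2}_\sigma(\Omega)\hookrightarrow (W^{1,5}_{0,\sigma}(\Omega))^*$, using the spatial gradient bounds from \eqref{bnmbncz2.5ghhjugghjllllljdfghhjjyuivssdddvbnklllnihjj} and \eqref{bnmbncz2.ffgddffffhh5ghhjuyuivvbnnihjj} together with the time derivative bounds of Lemma~\ref{qqqqlemma45630hhuujjuuyytt}. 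This yields strong convergence in $L^{2}_{\mathrm{loc}}(\bar\Omega\times[0,\infty))$ and, extracting a further subsequence, pointwise a.e.\ convergence.

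Third, the delicate point is the a.e.\ convergence $\nabla c_\varepsilon\to\nabla c$ in \eqref{1.1ddhhyujiiifgghhhge666ccdf2345ddvbnmklllhyuisda}, which is needed to handle the chemotactic flux $n_\varepsilon F_\varepsilon(n_\varepsilon)S_\varepsilon(\cdot,n_\varepsilon,c_\varepsilon)\nabla c_\varepsilon$. The strategy here is to test the $c_\varepsilon$-equation (after subtracting the equation for $c$ or, equivalently, by a standard energy-type argument for the heat operator $(\partial_t-\Delta+1)$) to show that $c_\varepsilon\to c$ strongly in $L^2_{\mathrm{loc}}([0,\infty);W^{1,2}(\Omega))$; this uses the already-established strong $L^2$ convergence of $c_\varepsilon$, $u_\varepsilon$, $n_\varepsilon$ and the boundedness of $\nabla c_\varepsilon$ in $L^2_{\mathrm{loc}}$. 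Along a further subsequence this gives the pointwise a.e.\ convergence of $\nabla c_\varepsilon$. I expect this to be the main obstacle because, unlike in the signal-consumption case, $c$ need not be uniformly bounded, so the $L^p$--$L^q$ quadratic estimates must be balanced carefully through the space-time bounds in \eqref{bnmbncz2.5ghhjugghjllllljdfghhjjyuivssdddvbnklllnihjj}--\eqref{1.1dddfgbhnjmdfgeddvbnmklllhyussdddfgggdisda}.

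Finally, once these convergences are in hand, one passes to the limit termwise in the weak formulations of the $n$-, $c$- and $u$-equations. The linear terms converge by weak convergence and $F_\varepsilon(n_\varepsilon)\to 1$, $\rho_\varepsilon\to 1$ pointwise. The nonlinear terms $n_\varepsilon u_\varepsilon$, $c_\varepsilon u_\varepsilon$, $n_\varepsilon F_\varepsilon(n_\varepsilon)S_\varepsilon(\cdot,n_\varepsilon,c_\varepsilon)\nabla c_\varepsilon$ are controlled by combining a.e.\ convergence with the uniform $L^r_{\mathrm{loc}}$ bounds of Lemma~\ref{lemmddaghjsffggggsddgghhmk4563025xxhjklojjkkk} with $r>1$, via the Vitali convergence theorem. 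For the Navier--Stokes convective term one uses that $Y_\varepsilon u_\varepsilon\otimes u_\varepsilon\to u\otimes u$ in $L^1_{\mathrm{loc}}$: strong convergence of $u_\varepsilon$ in $L^2_{\mathrm{loc}}$ combined with the standard bound $\|Y_\varepsilon w\|_{L^2}\le \|w\|_{L^2}$ and $Y_\varepsilon w\to w$ in $L^2_\sigma$ as $\varepsilon\searrow 0$ yields this by dominated convergence, using the $L^{10/3}_{\mathrm{loc}}$ bound of \eqref{bnmbncz2.ffgddffffhh5ghhjuyuivvbnnihjj} for uniform integrability. This verifies all identities \eqref{eqx45xx12112ccgghh}--\eqref{eqx45xx12112ccgghhjjgghh} and the integrability conditions \eqref{dffff1.1fghyuisdakkklll}--\eqref{726291hh} of Definition~\ref{df1}, completing the proof.
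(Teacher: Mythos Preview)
Your overall architecture matches the paper's: extract weak limits from the uniform bounds of Lemmas~\ref{lemmaghjffggsjjjjjsddgghhmk4563025xxhjklojjkkk}, \ref{aasslemmafggg78630jklhhjj}, \ref{lemmddaghjsffggggsddgghhmk4563025xxhjklojjkkk}, upgrade to strong/a.e.\ convergence via Aubin--Lions using the time-derivative bounds of Lemma~\ref{qqqqlemma45630hhuujjuuyytt}, and then pass to the limit in \eqref{eqx45xx12112ccgghh}--\eqref{eqx45xx12112ccgghhjjgghh}. The treatment of $Y_\varepsilon u_\varepsilon\otimes u_\varepsilon$ and of the integrability conditions is essentially the same as the paper's.

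The one genuine methodological difference is in how you obtain \eqref{1.1ddhhyujiiifgghhhge666ccdf2345ddvbnmklllhyuisda}. The paper does \emph{not} try to prove strong $L^2_{\mathrm{loc}}(W^{1,2})$ convergence by an energy argument. Instead it writes $c_{\varepsilon t}-\Delta c_\varepsilon=g_\varepsilon$ with $g_\varepsilon:=-c_\varepsilon+n_\varepsilon-u_\varepsilon\cdot\nabla c_\varepsilon$, observes from Lemmas~\ref{aasslemmafggg78630jklhhjj}--\ref{lemmddaghjsffggggsddgghhmk4563025xxhjklojjkkk} that $g_\varepsilon$ is bounded in $L^{5/4}(\Omega\times(0,T))$, and invokes maximal parabolic regularity to get $(c_\varepsilon)$ bounded in $L^{5/4}((0,T);W^{2,5/4}(\Omega))$. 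Aubin--Lions with $W^{2,5/4}\hookrightarrow\hookrightarrow W^{1,5/4}$ (and the bound \eqref{wwwwwqqqq1.1dddfgbhnjmdfgeddvbnmklllhyussddisda} on $\partial_t c_\varepsilon$) then gives $\nabla c_{\varepsilon_j}\to z_1$ in $L^{5/4}$, hence a.e., and Egorov's theorem together with \eqref{1.1ddfgghhhge666ccdf2345ddvbnmklllhyuisda} identifies $z_1=\nabla c$. This route is cleaner than yours: your ``subtract the equation for $c$'' step is circular (at this stage $c$ is not yet known to solve anything), and the alternative ``energy-type argument'' would have to be a Cauchy-sequence argument for $(\nabla c_\varepsilon)$ in $L^2$, which in turn requires controlling cross terms like $(u_{\varepsilon_j}-u_{\varepsilon_k})\cdot\nabla c_{\varepsilon_j}$ with only $L^2$-type bounds on each factor---doable, but noticeably more delicate than the paper's second-order regularity trick.

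A minor difference: for the nonlinear products $n_\varepsilon F_\varepsilon S_\varepsilon\nabla c_\varepsilon$ and $n_\varepsilon u_\varepsilon$ you invoke Vitali, whereas the paper first extracts a weak limit from the $L^r$ bounds in \eqref{1.1dddgghhjfgbhnjmdfgeddvbnhhjjjmklllhyussddisda} and then uses a.e.\ convergence plus Egorov's theorem to identify it. Both arguments are valid and essentially equivalent here.
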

\begin{proof}
 Based on Lemma \ref{aasslemmafggg78630jklhhjj}, Lemma \ref{lemmaghjffggsjjjjjsddgghhmk4563025xxhjklojjkkk}, Lemma \ref{qqqqlemma45630hhuujjuuyytt}, Lemma \ref{lemmddaghjsffggggsddgghhmk4563025xxhjklojjkkk} and the Aubin--Lions lemma (see e.g. \cite{Simon}), we can pick a sequence $(\varepsilon_j)_{j\in \mathbb{N}}\subset (0, 1)$ with $\varepsilon=\varepsilon_j\searrow0$ as $j\rightarrow\infty$  such that \dref{zjscz2.5297x963ddfgh0ddfggg6662222tt3}--\dref{zjscz2.fgghhkklll5297x963ddfgh0ddfggg6662222tt3} and \dref{zjscz2.5297x96302222t666t4}--\dref{zjscz2.5ffgtt297x96302266622tt4} are valid with certain limit
functions $n, c$ and $u$ belonging to the indicated spaces.
Next, let $g_\varepsilon(x, t) := -c_\varepsilon+n_{\varepsilon}-u_{\varepsilon}\cdot\nabla c_{\varepsilon}.$
With this
notation, the   second equation of \dref{1.1fghyuisda} can be rewritten in  component form as
\begin{equation}
c_{\varepsilon t}-\Delta c_{\varepsilon } = g_\varepsilon.
\label{1.1666ddjjkllllfgffgghheccdfddfgg2345ddvbnmklllhyuisda}
\end{equation}

Thus, recalling Lemmas \ref{aasslemmafggg78630jklhhjj}--\ref{lemmddaghjsffggggsddgghhmk4563025xxhjklojjkkk}  and applying the H\"{o}lder inequality, we conclude that, for any  $\varepsilon\in(0,1)$, $ g_\varepsilon$
is bounded in $L^{\frac{5}{4}} (\Omega\times(0, T))$,  and then we may invoke the standard parabolic regularity theory  to \dref{1.1666ddjjkllllfgffgghheccdfddfgg2345ddvbnmklllhyuisda} and  infer that
$(c_{\varepsilon})_{\varepsilon\in(0,1)}$ is bounded in
$L^{\frac{5}{4}} ((0, T); W^{2,\frac{5}{4}}(\Omega))$.
 Here, noting from  \dref{wwwwwqqqq1.1dddfgbhnjmdfgeddvbnmklllhyussddisda} and the Aubin--Lions lemma that $(c_{\varepsilon})_{\varepsilon\in(0,1)}$ is    relative compact in
$L^{\frac{5}{4}} ((0, T); W^{1,\frac{5}{4}}(\Omega))$. We can pick an appropriate subsequence that is
still written as $(\varepsilon_j )_{j\in \mathbb{N}}$ such that $\nabla c_{\varepsilon_j} \rightarrow z_1$
 in $L^{\frac{5}{4}} (\Omega\times(0, T))$ for all $T\in(0, \infty)$ and some
$z_1\in L^{\frac{5}{4}} (\Omega\times(0, T))$ as $j\rightarrow\infty$. Therefore,  by \dref{wwwwwqqqq1.1dddfgbhnjmdfgeddvbnmklllhyussddisda}, we can also derive that $\nabla c_{\varepsilon_j} \rightarrow z_1$ a.e. in $\Omega\times(0, \infty)$
 as $j \rightarrow\infty$.
In view  of \dref{1.1ddfgghhhge666ccdf2345ddvbnmklllhyuisda} and   Egorov's theorem, we conclude  that
$z_1=\nabla c$ and hence \dref{1.1ddhhyujiiifgghhhge666ccdf2345ddvbnmklllhyuisda} holds.


In the following proof,  we shall prove that $(n,c,u)$ is a weak solution of problem \dref{33dfff4451.1fghyuisda}--\dref{33dfff44sdfff51.1fgsdddfhyuisda} in the sense of Definition \ref{df1},
 $n,c$ and
 $u$
satisfy (1.7), (3.9) and (3.10), (3.11), (3.12).
Therefore,
with the help of  \dref{zjscz2.5297x963ddfgh0ddfggg6662222tt3}--\dref{zjscz2.fgghh5297x963ddfgh0ddfggg6662222tt3} and \dref{zjscz2.5297x96302222t666t4}--\dref{zjscz2.5297x96366602222tt4455}, we can derive  \dref{dffff1.1fghyuisdakkklll}.
Now, by the nonnegativity of $n_\varepsilon$ and $c_\varepsilon$, we obtain  $n \geq 0$ and $c\geq 0$. Next, from
\dref{zjscz2.5297x96366602222tt4455} and $\nabla\cdot u_{\varepsilon} = 0$, we conclude that
$\nabla\cdot u = 0$ a.e. in $\Omega\times (0, \infty)$.
However, in view of \dref{1.ffggvbbnxxccvvn1}, \dref{x1.73142vghf48rtgyhu},
 as well as  \dref{1.1dddgghhjfgbhnjmdfgeddvbnhhjjjmklllhyussddisda} and Lemma \ref{lemmddaghjsffggggsddgghhmk4563025xxhjklojjkkk}, we conclude that
\begin{equation}n_\varepsilon F_{\varepsilon}(n_{\varepsilon})S_\varepsilon(x, n_{\varepsilon}, c_{\varepsilon})\nabla c_\varepsilon\rightharpoonup z_2
~\mbox{in}~ L^{2}(\Omega\times(0,T))~\mbox{as}~\varepsilon = \varepsilon_j\searrow 0~\mbox{for each}~ T\in(0,\infty).
\label{1.1ddddfddffttyygghhyujiiifgghhhgffgge6bhhjh66ccdf2345ddvbnmklllhyuisda}
\end{equation}
However, it follows from \dref{x1.73142vghf48rtgyhu}, \dref{3.10gghhjuuloollyuigghhhyy}, \dref{1.1dddgghhjfgbhnjmdfgeddvbnhhjjjmklllhyussddisda}, \dref{zjscz2.5297x963ddfgh0ddfggg6662222tt3}, \dref{zjscz2.fgghh5297x963ddfgh0ddfggg6662222tt3} and \dref{1.1ddhhyujiiifgghhhge666ccdf2345ddvbnmklllhyuisda} that
\begin{equation}n_\varepsilon F_{\varepsilon}(n_{\varepsilon})S_\varepsilon(x, n_{\varepsilon}, c_{\varepsilon})\nabla c_\varepsilon\rightarrow nS(x, n, c)\nabla c~\mbox{a.e.}~\mbox{in}~\Omega\times(0,\infty)~\mbox{as}~\varepsilon = \varepsilon_j\searrow 0.
\label{1.1ddddfddfftffghhhtyygghhyujiiifgghhhgffgge6bhhjh66ccdf2345ddvbnmklllhyuisda}
\end{equation}
Again by  Egorov's theorem, we gain $z_2=nS(x, n, c)\nabla c$, and therefore   \dref{1.1ddddfddffttyygghhyujiiifgghhhgffgge6bhhjh66ccdf2345ddvbnmklllhyuisda}
can be rewritten as
\begin{equation}n_\varepsilon F_{\varepsilon}(n_{\varepsilon})S_\varepsilon(x, n_{\varepsilon}, c_{\varepsilon})\nabla c_\varepsilon\rightharpoonup nS(x, n, c)\nabla c
~\mbox{weakly in}~ L^{2}(\Omega\times(0,T))~\mbox{as}~\varepsilon = \varepsilon_j\searrow 0
\label{1.1ddddfddffttyygghhyujiiffghhhifgghhhgffgge6bhhjh66ccdf2345ddvbnmklllhyuisda}
\end{equation}
for each $T\in(0,\infty)$,
which  implies  that the integrability of $nS(x, n, c)\nabla c$ in \dref{726291hh} as well.
   Thereupon, recalling \dref{zjscz2.5297x96302222t666t4}, \dref{zjscz2.5297x963ddfgh0ddfggg6662222tt3} and \dref{1.1dddgghhjfgbhnjmdfgeddvbnhhjjjmklllhyussddisda}, we infer that, for each $T\in(0, \infty),$
  \begin{equation}
n_\varepsilon u_{\varepsilon}\rightharpoonup z_3 ~\mbox{weakly in}~ L^{{\frac{5(3+2\alpha)}{3(4+\alpha)}}}(\Omega\times(0,T)),
   ~\mbox{as}~\varepsilon = \varepsilon_j\searrow 0,
   \label{zjscz2.529ffghhh7x963ddfgh0ddfggg6662222tt3}
\end{equation}
which together with \dref{zjscz2.5297x963ddfgh0ddfggg6662222tt3}  and \dref{zjscz2.5297x96302222t666t4} implies
 \begin{equation}n_\varepsilon u_\varepsilon\rightarrow nu~\mbox{a.e.}~\mbox{in}~\Omega\times(0,\infty)~\mbox{as}~\varepsilon = \varepsilon_j\searrow 0.
\label{1.1ddddfddfftffghhhtyygghhyujiiifgghhhgffgge6bhhffgggjh66ccdf2345ddvbnmklllhyuisda}
\end{equation}
 \dref{zjscz2.529ffghhh7x963ddfgh0ddfggg6662222tt3} along with  \dref{1.1ddddfddfftffghhhtyygghhyujiiifgghhhgffgge6bhhffgggjh66ccdf2345ddvbnmklllhyuisda} and  Egorov's theorem guarantees that $z_3=nu$, whereupon we derive from \dref{zjscz2.529ffghhh7x963ddfgh0ddfggg6662222tt3} that
 \begin{equation}
n_\varepsilon u_{\varepsilon}\rightharpoonup nu ~~\mbox{weakly in}~~ L^{{\frac{5(3+2\alpha)}{3(4+\alpha)}}}(\Omega\times(0,T))
   ~~\mbox{as}~~\varepsilon = \varepsilon_j\searrow 0,\label{zjscz2.529ffghhh7x963djkkkkdfgh0ddfggg6662222tt3}
\end{equation}
   for each $T\in(0, \infty)$.

As a straightforward consequence of \dref{zjscz2.fgghh5297x963ddfgh0ddfggg6662222tt3} and \dref{zjscz2.5297x96302222t666t4}, it holds that
\begin{equation}
c_\varepsilon u_\varepsilon\rightarrow cu ~\mbox{ in}~ L^{1}_{loc}(\bar{\Omega}\times(0,\infty))~\mbox{as}~\varepsilon=\varepsilon_j\searrow0.
 \label{zxxcvvfgggjscddfffcvvfggz2.5297x96302222tt4}
\end{equation}
Thus, the integrability of $nu$ and $cu$ in \dref{726291hh} is verified by \dref{zjscz2.fgghh5297x963ddfgh0ddfggg6662222tt3}
 and \dref{zjscz2.5297x96302222t666t4}.

 Moreover, a combination of \dref{zjscz2.5297x96302222t666t4} and arguments in the proof of
(6.26) of \cite{Kegssddsddfff00} yields that
\begin{equation}
\begin{array}{rl}
Y_{\varepsilon}u_{\varepsilon}\otimes u_{\varepsilon}\rightarrow u \otimes u ~\mbox{in}~L^1_{loc}(\bar{\Omega}\times[0,\infty))~\mbox{as}~\varepsilon=\varepsilon_j\searrow0.
\end{array}
\label{ggjjssdffzccfccvvfgghjjjvvvvgccvvvgjscz2.5297x963ccvbb111kkuu}
\end{equation}
Therefore, the integrability of $nS(x,n,c)\nabla c$, $nu$, $cu$, and $u\otimes u$ in \dref{726291hh} is verified
by \dref{1.1ddddfddffttyygghhyujiiffghhhifgghhhgffgge6bhhjh66ccdf2345ddvbnmklllhyuisda}, \dref{zjscz2.529ffghhh7x963djkkkkdfgh0ddfggg6662222tt3}, \dref{zxxcvvfgggjscddfffcvvfggz2.5297x96302222tt4} and \dref{ggjjssdffzccfccvvfgghjjjvvvvgccvvvgjscz2.5297x963ccvbb111kkuu}.
Finally, according to \dref{zjscz2.5297x963ddfgh0ddfggg6662222tt3}--\dref{zjscz2.fgghh5297x963ddfgh0ddfggg6662222tt3},
\dref{zjscz2.5297x96302222t666t4}--\dref{zjscz2.5297x96366602222tt4455},
\dref{1.1ddddfddffttyygghhyujiiffghhhifgghhhgffgge6bhhjh66ccdf2345ddvbnmklllhyuisda}, \dref{zjscz2.529ffghhh7x963djkkkkdfgh0ddfggg6662222tt3},  \dref{zxxcvvfgggjscddfffcvvfggz2.5297x96302222tt4} and \dref{ggjjssdffzccfccvvfgghjjjvvvvgccvvvgjscz2.5297x963ccvbb111kkuu}, we may pass to the limit in
the respective weak formulations associated with the regularized system \dref{1.1fghyuisda} and obtain the integral
identities \dref{eqx45xx12112ccgghh}--\dref{eqx45xx12112ccgghhjjgghh}.
\end{proof}
Thereby our main result on global existence has actually been established already:

{\bf Proof of Theorem \ref{theorem3}.} Theorem \ref{theorem3} is a direct result of Lemma \ref{lesssmma45630223}.

\section{Asymptotic behavior}

As the existence part of the main theorem is covered by the previous section, we will now turn
our attention to verifying the desired convergence result.
In the following, we will  state some auxiliary lemmas, which will be frequently used in the sequel.
%
To this end, we first briefly collect some known facts concerning
the Stokes operator from 
\cite{Friedmanddff,Winkler11215}.
\begin{lemma}\label{llssdrffmmgghhjjjnnccvvccvvkkkkgghhkkllvvlemma45630}(Theorem  1 and Theorem  2 of \cite{Fujiwara66612186})
The Helmholtz projection $\mathcal{P}$ defines a bounded linear operator $\mathcal{P}: L^p(\Omega,\mathbb{R}^3)\rightarrow L^p_\sigma (\Omega)$; in
particular, for any $p\in (1,\infty)$, there is $k_0(p) > 0$ such that
\begin{equation}
\|\mathcal{P} w\|_{L^p(\Omega)} \leq k_0(p)\|w\|_{L^{p}(\Omega)}
\label{1.1ddfghssdddssggyssddduisda}
\end{equation}
for every $w\in L^p (\Omega).$
%
\end{lemma}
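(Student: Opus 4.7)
The plan is to establish the lemma by constructing the Helmholtz decomposition for vector fields in $L^p(\Omega,\mathbb{R}^3)$ and then extracting the $L^p$-boundedness of the associated projection from elliptic regularity theory for the Neumann problem on the bounded smooth domain $\Omega$. Since the statement is classical and quoted directly from \cite{Fujiwara66612186}, I would treat the proof as a structured verification of the two ingredients rather than a self-contained new argument.

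First I would recall that for any $w\in L^p(\Omega,\mathbb{R}^3)$ with $p\in(1,\infty)$, one seeks a scalar function $q\in W^{1,p}(\Omega)$ solving the weak Neumann problem
\begin{equation*}
\int_\Omega \nabla q\cdot\nabla\varphi\,dx=\int_\Omega w\cdot\nabla\varphi\,dx\qquad\text{for all }\varphi\in C^\infty(\bar\Omega),
\end{equation*}
which is uniquely solvable up to an additive constant because $\Omega$ is bounded with smooth boundary. Once $q$ is produced, one sets $\mathcal{P}w:=w-\nabla q$ and verifies that $\nabla\cdot(\mathcal{P}w)=0$ in the distributional sense and $(\mathcal{P}w)\cdot\nu=0$ on $\partial\Omega$ in the sense of $W^{-1/p,p}(\partial\Omega)$, so that $\mathcal{P}w\in L^p_\sigma(\Omega)$ and $\mathcal{P}$ is a well-defined linear projector.

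The core quantitative step is the $L^p$-bound $\|\nabla q\|_{L^p(\Omega)}\le C(p)\,\|w\|_{L^p(\Omega)}$ with a constant depending only on $p$ and on the geometry of $\Omega$. I would obtain this by invoking the Calder\'on--Zygmund / Agmon--Douglis--Nirenberg $L^p$-regularity theory for the Neumann Laplacian on a bounded smooth domain, which gives
\begin{equation*}
\|\nabla q\|_{L^p(\Omega)}\le C(p)\,\sup_{\substack{\varphi\in C^\infty(\bar\Omega)\\ \|\nabla\varphi\|_{L^{p'}}\le 1}}\Bigl|\int_\Omega w\cdot\nabla\varphi\,dx\Bigr|\le C(p)\,\|w\|_{L^p(\Omega)},
\end{equation*}
exactly as established in Theorem~1 of \cite{Fujiwara66612186}. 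From this the desired estimate follows by the triangle inequality with $k_0(p):=1+C(p)$, because $\|\mathcal{P}w\|_{L^p}\le \|w\|_{L^p}+\|\nabla q\|_{L^p}$.

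The main obstacle, and the only nontrivial point in the argument, is precisely the $L^p$-solvability of the Neumann problem for $p\ne 2$: the case $p=2$ is an immediate consequence of the Lax--Milgram lemma after quotienting by constants, but passing to general $p\in(1,\infty)$ requires the singular-integral machinery of Calder\'on--Zygmund or, equivalently, the boundary regularity theory of Agmon--Douglis--Nirenberg adapted to the Neumann boundary condition on $\partial\Omega$. Once this machinery is in place, the rest of the proof --- linearity, the projection property $\mathcal{P}^2=\mathcal{P}$, and the mapping into $L^p_\sigma(\Omega)$ --- is routine. For this reason I would refer the reader directly to \cite{Fujiwara66612186} for the singular-integral part and only sketch the Helmholtz decomposition argument, which is exactly what the statement of the lemma does.
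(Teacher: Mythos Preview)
Your proposal is correct and outlines the standard approach to establishing the $L^p$-boundedness of the Helmholtz projection via the weak Neumann problem and Calder\'on--Zygmund / Agmon--Douglis--Nirenberg regularity. Note, however, that the paper itself does not supply a proof of this lemma at all: it simply quotes the result from \cite{Fujiwara66612186} as an auxiliary fact, so there is no ``paper's own proof'' to compare against --- your sketch is essentially a reconstruction of what the cited reference contains.
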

\begin{lemma}(\cite{Horstmann791,Winkler792,Zhengddfggghjjkk1})\label{llssdrffmmggnnccvvccvvkkkkgghhkkllvvlemma45630}
The Stokes operator $A$ generates the analytic semigroup $(e^{-tA} )_{t>0}$ in $L^2_{\sigma}(\Omega)$.
Its spectrum
satisfies $\lambda_1 := \inf Re \sigma(A) > 0$ and we fix $\mu\in (0,\lambda_1)$. For any such $\mu$, the following holds:

(i) For any $p\in (1,\infty)$ and $\gamma \geq0$, there is $\kappa_1 (p,\gamma) > 0$ such that
\begin{equation}
\|A^\gamma e^{-tA}\varphi\|_{L^p(\Omega)} \leq \kappa_1(p,\gamma)t^{-\gamma}e^{-\mu t}\|\varphi\|_{L^p(\Omega)} ~~~\mbox{for all}~~~ t > 0~~~\mbox{and any}~~~\varphi\in L^{p}_\sigma(\Omega).
\label{1.1ddfghssdddyddffssddduisda}
\end{equation}
(ii) For $p,q$ satisfying $1 < p \leq  q < \infty$, there exists $\kappa_2 (p,q) > 0$ such that
\begin{equation}
\| e^{-tA}\varphi\|_{L^q(\Omega)} \leq \kappa_2(p,q)t^{-\frac{3}{2}(\frac{1}{p}-\frac{1}{q})}e^{-\mu t}\|\varphi\|_{L^{p}(\Omega)} ~~~\mbox{for all}~~~ t > 0~~~\mbox{and any}~~~\varphi\in L^{p}_\sigma(\Omega).
\label{1.1ddfghssdddyssddduisda}
\end{equation}
(iii) For $p,q$ satisfying $1 < p \leq  q < \infty$, there exists a positive constant  $\kappa_3 (p,q) $ such that
\begin{equation}
\|\nabla e^{-tA}\varphi\|_{L^q(\Omega)} \leq \kappa_3(p,q)t^{-\frac{1}{2}-\frac{3}{2}(\frac{1}{p}-\frac{1}{q})}e^{-\mu t}\|\varphi\|_{L^p(\Omega)} ~~~\mbox{for all}~~~ t > 0~~~\mbox{and any}~~~\varphi\in L^{p}_\sigma(\Omega).
\label{1.1ddddfghhfghssdddyssddduisda}
\end{equation}

(iv) If $\gamma\geq 0$ and $1 < q < p < \infty$ satisfy $2\gamma-\frac{3}{q}\geq 1-\frac{3}{p}$, then there is $k_4(\gamma,p,q)>0$ such that for all
 $\varphi\in D(A^\gamma)$
\begin{equation}
\|\varphi\|_{W^{1,p}(\Omega)} \leq \kappa_4(\gamma,p,q)\|A^\gamma\varphi\|_{L^q(\Omega)}.
\label{1.1ddddfghhsssfghssdddyssddduisda}
\end{equation}
%
\end{lemma}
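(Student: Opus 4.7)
The lemma collects four standard mapping properties of the Stokes semigroup on a bounded smooth domain; my plan is to treat analyticity and the spectral gap as the starting input, and then derive (i)--(iv) from the general theory of sectorial operators combined with Giga's $L^p$-$L^q$ framework for the Stokes operator. First I would record that $-A$ is sectorial on $L^p_\sigma(\Omega)$ for every $p \in (1,\infty)$ (this is Giga's realization of the Stokes operator on bounded smooth domains), so $(e^{-tA})_{t>0}$ is an analytic semigroup; the spectral bound $\lambda_1 := \inf\operatorname{Re}\sigma(A) > 0$ follows because $A$ has compact resolvent on the bounded domain $\Omega$ and is injective on $L^2_\sigma(\Omega)$, and the smallest eigenvalue is strictly positive by the Poincar\'e inequality on $W^{1,2}_{0,\sigma}(\Omega)$.

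For part (i), the plan is to apply the standard analytic semigroup estimate for fractional powers: for a sectorial operator $A$ whose spectrum lies in $\{\operatorname{Re}\lambda \ge \lambda_1\}$ and for any $\gamma \ge 0$, the Dunford integral representation of $A^\gamma e^{-tA}$ yields $\|A^\gamma e^{-tA}\|_{L^p \to L^p} \le C\,t^{-\gamma}e^{-\mu t}$ for each $\mu \in (0,\lambda_1)$ (see Pazy or Friedman); apply this on $L^p_\sigma(\Omega)$. For (ii), the plan is to invoke Giga's $L^p$-$L^q$ smoothing property of the Stokes semigroup on bounded domains: $\|e^{-tA}\varphi\|_{L^q} \le C t^{-\tfrac{3}{2}(\tfrac{1}{p}-\tfrac{1}{q})}\|\varphi\|_{L^p}$ for $1 < p \le q < \infty$, and then upgrade the bound to include $e^{-\mu t}$ by splitting $e^{-tA} = e^{-(t-1)A}e^{-A}$ for $t \ge 1$ and using part (i) (or the spectral gap) on the first factor while the second factor takes care of the $L^p$-$L^q$ smoothing.

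For (iii), I would combine (i) and (ii) together with the bounded\-ness of $\nabla A^{-1/2}$ on $L^p_\sigma(\Omega)$ (a consequence of the Helmholtz projection estimate in Lemma \ref{llssdrffmmgghhjjjnnccvvccvvkkkkgghhkkllvvlemma45630} and Giga's characterization of $D(A^{1/2})$ with equivalent norm $\|\nabla \cdot\|_{L^p}$): write
\[
\nabla e^{-tA}\varphi \;=\; (\nabla A^{-1/2})\,\bigl(A^{1/2}e^{-tA/2}\bigr)\,e^{-tA/2}\varphi,
\]
then bound the first factor by a constant, the middle factor by $C\,t^{-1/2}e^{-\mu t/2}$ via (i), and the last factor by $C\,t^{-\tfrac{3}{2}(\tfrac{1}{p}-\tfrac{1}{q})}e^{-\mu t/2}$ via (ii); the resulting product gives the claimed exponent $-\tfrac{1}{2}-\tfrac{3}{2}(\tfrac{1}{p}-\tfrac{1}{q})$. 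Finally, for (iv) the plan is to use the embedding $D(A^\gamma) \hookrightarrow W^{1,p}_\sigma(\Omega)$ whenever the scaling condition $2\gamma - \tfrac{3}{q} \ge 1 - \tfrac{3}{p}$ holds: this follows from the identification $D(A^\gamma) = [L^q_\sigma, D(A)]_\gamma$ (complex interpolation) and the standard Sobolev embedding $W^{2\gamma, q} \hookrightarrow W^{1,p}$ for such exponents. I expect the most delicate step is (iii), where one must be careful that the composition $\nabla A^{-1/2}$ really is bounded on $L^p_\sigma$ for the chosen $p$; on a bounded smooth domain this is part of Giga--Sohr theory, so the obstacle is entirely bookkeeping rather than analysis.
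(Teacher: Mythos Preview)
The paper does not actually prove this lemma: it is stated with citations to the literature and no proof environment follows. So there is no ``paper's own proof'' to compare against; the lemma is simply quoted as a known collection of Stokes semigroup estimates.

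Your outline is a correct and standard way to establish these facts: sectoriality of the Stokes operator on $L^p_\sigma(\Omega)$ (Giga), the spectral gap from compact resolvent plus Poincar\'e, the analytic-semigroup fractional power estimate for (i), Giga's $L^p$--$L^q$ smoothing for (ii), the factorization through $\nabla A^{-1/2}$ for (iii), and the interpolation/embedding identification of $D(A^\gamma)$ for (iv). This is exactly the route taken in the sources the paper cites (and in Giga--Sohr, Cao--Lankeit, Winkler's chemotaxis--fluid papers), so your proposal is fine and in fact supplies more detail than the paper itself. One minor remark: in your treatment of (ii) you split $e^{-tA}=e^{-(t-1)A}e^{-A}$ for $t\ge 1$, which handles large $t$; for $t\in(0,1)$ the factor $e^{-\mu t}$ is harmless since it is bounded above and below, so the short-time case follows directly from Giga's bare $L^p$--$L^q$ estimate without the exponential. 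You might also want to note that the references the paper actually cites here (Horstmann--Winkler, Winkler 2010, Zheng--Ke) concern the Neumann heat semigroup rather than the Stokes semigroup; the correct primary sources for the Stokes estimates are Giga \cite{Giga1215} and Giga--Sohr \cite{Gigass12176}, which the paper does invoke elsewhere.
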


\begin{lemma}\label{fhhghfbglemma4563025xxhjklojjkkkgyhuissddff} (Lemma 4.6 of \cite{EspejojjEspejojainidd793})
Let $\lambda > 0, C > 0$, and suppose that $y\in C^1 ([0,\infty))$ and
$h\in  C^0 ([0,\infty))$ are nonnegative functions satisfying $y'(t)+\lambda y(t)\leq h(t)$ for some $\lambda > 0$ and all
$t > 0$. Then if
$\int_0^\infty h(s)ds \leq C,$ we have $\lim_{t\rightarrow\infty}y(t)=0$.
\begin{lemma}\label{sedddlemmaddffffdfffgg4sssdddd5630}
Suppose that
\begin{equation}\label{x1.73142vgddddssdddhfjjk48}C_S<2\sqrt{C_N},
\end{equation}
where $C_N$ is the best  Poincar\'{e} constant and $C_S$ is given by  \dref{x1.73142vghf48gg}.
Then there exists  $B > 0$ 
such that for each $\varepsilon\in (0, 1)$ we have
\begin{equation}
\begin{array}{rl}
&\disp{\frac{B}{2}\frac{d}{dt}\|n_\varepsilon(\cdot,t)-\bar{n}_0\|^{{2}}_{L^{{2}}(\Omega)}+
\frac{1}{2}\frac{d}{dt}\|c_\varepsilon(\cdot,t)-\bar{n}_0\|^{{2}}_{L^{{2}}(\Omega)}}\\
&\disp{+(\frac{BC_N}{2}-\frac{1}{4})
\int_{\Omega}| n_\varepsilon-\bar{n}|^2+(1-\frac{BC_S^2}{2})\int_{\Omega}|\nabla c_\varepsilon|^2}\\
\leq&\disp{0~~\mbox{for all}~~ t>0, }\\
\end{array}
\label{wwwwwcz2.511ssssdsdddffssddsdsssdddfggsddffggg4ddfggg114}
\end{equation}
where  \begin{equation}
\bar{n}_0=\frac{1}{|\Omega|}\int_{\Omega}n_{0}.
\label{1111hhxxcdfvhhhvsddfffgssjjdfffsfffsddcsssz2.5}
\end{equation}
\end{lemma}
\begin{proof}
 Firstly, we use the first equation in \dref{33dfff4451.1fghyuisda}--\dref{33dfff44sdfff51.1fgsdddfhyuisda} and the
fact that $\nabla\cdot u_{\varepsilon}=0$  to compute
\begin{equation}
\begin{array}{rl}
&\disp{\frac{1}{2}\frac{d}{dt}\|n_\varepsilon(\cdot,t)-\bar{n}_\varepsilon\|^{{2}}_{L^{{2}}(\Omega)}}
\\
=&\disp{
\int_{\Omega}(n_\varepsilon-\bar{n}_\varepsilon)[\Delta n_\varepsilon-u_\varepsilon\cdot\nabla n_\varepsilon-\nabla\cdot(n_{\varepsilon}F_{\varepsilon}(n_{\varepsilon})S_\varepsilon(x, n_{\varepsilon}, c_{\varepsilon})\nabla c_{\varepsilon})]}\\
\leq&\disp{-
\int_{\Omega}|\nabla n_\varepsilon|^2+\int_{\Omega}|n_{\varepsilon}||S_\varepsilon(x, n_{\varepsilon}, c_{\varepsilon})||\nabla n_{\varepsilon}||\nabla c_{\varepsilon}|}\\
\leq&\disp{-\frac{1}{2}
\int_{\Omega}|\nabla n_\varepsilon|^2+\frac{C_S^2}{2}\int_{\Omega}\frac{n_{\varepsilon}^2}{(1 + n_{\varepsilon})^{2\alpha}}|\nabla c_{\varepsilon}|^2}\\
\leq&\disp{-\frac{1}{2}
\int_{\Omega}|\nabla n_\varepsilon|^2+\frac{C_S^2}{2}\int_{\Omega}|\nabla c_{\varepsilon}|^2~~\mbox{for all}~~ t>0 }\\
\end{array}
\label{wwwwwcz2.511ssssdssddfggsddffggg4ddfggg114}
\end{equation}
by using \dref{1.ffggvbbnxxccvvn1} as well as \dref{x1.73142vghf48gg} and $\alpha\geq1$, where
\begin{equation}
\bar{n}_\varepsilon=\frac{1}{|\Omega|}\int_{\Omega}n_{\varepsilon}
\label{1111hhxxcdfvhhhvsssdddddfffgssjjdfffsfffsddcsssz2.5}
\end{equation}
and $C_S$ is the same as \dref{x1.73142vghf48gg}.
We  note from the Poincar\'{e} inequality that there is $C_N> 0$ such that
\begin{equation}
\| \varphi-\frac{1}{|\Omega|}\int_{\Omega}\varphi\|_{L^2(\Omega)}^2 \leq C_N\int_\Omega|\nabla\varphi|^2 ~~~\mbox{for all}~~~~\varphi\in W^{1,2}(\Omega).
\label{1.1ddfghssdddsssssyddffssddduisda}
\end{equation}
This combined with \dref{wwwwwcz2.511ssssdssddfggsddffggg4ddfggg114}  yields to
\begin{equation}
\begin{array}{rl}
\disp{\frac{1}{2}\frac{d}{dt}\|n_\varepsilon(\cdot,t)-\bar{n}_\varepsilon\|^{{2}}_{L^{{2}}(\Omega)}}\leq&\disp{-\frac{C_N}{2}
\int_{\Omega}| n_\varepsilon-\bar{n}_\varepsilon|^2+\frac{C_S^2}{2}\int_{\Omega}|\nabla c_{\varepsilon}|^2~~\mbox{for all}~~ t>0, }\\
\end{array}
\label{ssdfffwwwwwcz2.511ssssdssdsssdddfggsddffggg4ddfggg114}
\end{equation}
which together with \dref{ddfgczhhhh2.5ghju48cfg924ghyuji} implies
\begin{equation}
\begin{array}{rl}
\disp{\frac{1}{2}\frac{d}{dt}\|n_\varepsilon(\cdot,t)-\bar{n}_0\|^{{2}}_{L^{{2}}(\Omega)}}\leq&\disp{-\frac{C_N}{2}
\int_{\Omega}| n_\varepsilon-\bar{n}_0|^2+\frac{C_S^2}{2}\int_{\Omega}|\nabla c_{\varepsilon}|^2~~\mbox{for all}~~ t>0. }\\
\end{array}
\label{wwwwwcz2.511ssssdssdsssdddfggsddffggg4ddfggg114}
\end{equation}

Next, by means of the testing procedure, 
 we may derive from  the Young inequality that 
\begin{equation}
\begin{array}{rl}
&\disp{\frac{1}{2}\frac{d}{dt}\|c_\varepsilon(\cdot,t)-\bar{n}_0\|^{{2}}_{L^{{2}}(\Omega)}}
\\
=&\disp{
\int_{\Omega}(c_\varepsilon-\bar{n}_0)[\Delta c_\varepsilon-u_\varepsilon\cdot\nabla c_\varepsilon-(c_\varepsilon-\bar{n}_0)+(n_\varepsilon-\bar{n}_0)]}\\
=&\disp{
\int_{\Omega}(c_\varepsilon-\bar{n}_0)(\Delta c_\varepsilon-u_\varepsilon\cdot\nabla c_\varepsilon)-\int_{\Omega}(c_\varepsilon-\bar{n}_0)^2+\int_{\Omega}(c_\varepsilon-\bar{n}_0)(n_\varepsilon-\bar{n}_0)}\\
\leq&\disp{-
\int_{\Omega}|\nabla c_\varepsilon|^2+\frac{1}{4}\int_{\Omega}(n_\varepsilon-\bar{n}_0)^2~~\mbox{for all}~~ t>0, }\\
\end{array}
\label{wwwwwcz2.511ssssssdfggsddffggg4ddfggg114}
\end{equation}
where we have used the fact that $\nabla\cdot u_\varepsilon = 0$ and $u_\varepsilon |_{\partial\Omega} = 0$.

In view of \dref{x1.73142vgddddssdddhfjjk48}, we can choose $B > 0$ such that
\begin{equation}
1-\frac{B}{2}C_S^2 >0
\label{1.1ddfghssdddsssssyddffssdddfffguisda}
\end{equation}
and
\begin{equation}
\frac{B}{2}C_{N}-\frac{1}{4}>0.
\label{1.1ddfghssdddsssssydssdffssddduisda}
\end{equation}
Collecting \dref{wwwwwcz2.511ssssdssdsssdddfggsddffggg4ddfggg114}--\dref{1.1ddfghssdddsssssydssdffssddduisda}, we thus infer that
\begin{equation}
\begin{array}{rl}
&\disp{\frac{B}{2}\frac{d}{dt}\|n_\varepsilon(\cdot,t)-\bar{n}_0\|^{{2}}_{L^{{2}}(\Omega)}+
\frac{1}{2}\frac{d}{dt}\|c_\varepsilon(\cdot,t)-\bar{n}_0\|^{{2}}_{L^{{2}}(\Omega)}}\\
&\disp{+(\frac{BC_N}{2}-\frac{1}{4})
\int_{\Omega}| n_\varepsilon-\bar{n}_0|^2+(1-\frac{BC_S^2}{2})\int_{\Omega}|\nabla c_\varepsilon|^2}\\
\leq&\disp{0~~\mbox{for all}~~ t>0, }\\
\end{array}
\label{wwwwwcz2.511ssssdsssddsdsssdddfggsddffggg4ddfggg114}
\end{equation}
which completes the proof.
\end{proof}
\begin{lemma}\label{11aaalemdfghkkmaddffffdfffgg4sssdddd5630}
Under the assumptions of Lemma \ref{sedddlemmaddffffdfffgg4sssdddd5630}, then for any $t>0,$ there exists $\rho_{*,1} > 0$ 
 such that for all $\varepsilon\in (0, 1)$,
\begin{equation}
\begin{array}{rl}
&\disp{\|n_\varepsilon(\cdot,t)-\bar{n}_0\|^{{2}}_{L^{{2}}(\Omega)}}+
\|c_\varepsilon(\cdot,t)-\bar{n}_0\|^{{2}}_{L^{{2}}(\Omega)}\\
\leq&\disp{e^{-\rho_{*,1}  t}[\disp{\|n_0(\cdot,t)-\bar{n}_0\|^{{2}}_{L^{{2}}(\Omega)}}+
\|c_0(\cdot,t)-\bar{n}_0\|^{{2}}_{L^{{2}}(\Omega)}], }\\
\end{array}
\label{wwwwwcz2.511ssssdsdddffsssdfffsddsdsssdddfggsddffggg4ddfggg114}
\end{equation}
and there exists $C_{*,1} > 0$ such that
\begin{equation}
\int_0^\infty\int_{\Omega}|\nabla c_{\varepsilon}|^2+\int_0^\infty\int_{\Omega}| n_\varepsilon-\bar{n}_0|^2+\int_0^\infty\int_{\Omega}|\nabla n_\varepsilon|^2 \leq C_{*,1} ~~~\mbox{for all}~~~~\varepsilon\in(0,1),
\label{1.1ddfghssdddssssssddsyddsssffssddduisda}
\end{equation}
where $\bar{n}_0$ is given by \dref{1111hhxxcdfvhhhvsddfffgssjjdfffsfffsddcsssz2.5}.
\end{lemma}
\begin{proof}
Let $y(t)={\disp\frac{B}{2}\disp\frac{d}{dt}\|n_\varepsilon(\cdot,t)-\bar{n}_0\|^{{2}}_{L^{{2}}(\Omega)}}+
\disp\frac{1}{2}\disp\frac{d}{dt}\|c_\varepsilon(\cdot,t)-\bar{n}_0\|^{{2}}_{L^{{2}}(\Omega)}.$  Then by \dref{wwwwwcz2.511ssssdsdddffssddsdsssdddfggsddffggg4ddfggg114}, one can conclude that there exists  a positive constant $\kappa_{1,*} $ (e.g. $\kappa_{1,*} \leq\min\{\frac{2(\frac{BC_N}{2}-\frac{1}{4})}{B},2C_N(1-\frac{BC_S^2}{2})\}$) such that
$$y(t)+\kappa_{1,*} y(t)\leq0,$$
so that, integrating the above inequality and \dref{wwwwwcz2.511ssssdsdddffssddsdsssdddfggsddffggg4ddfggg114}  in time, we can get \dref{wwwwwcz2.511ssssdsdddffsssdfffsddsdsssdddfggsddffggg4ddfggg114} and \dref{1.1ddfghssdddssssssddsyddsssffssddduisda} by using \dref{wwwwwcz2.511ssssdssddfggsddffggg4ddfggg114}.
\end{proof}

%
In view of the interpolation, the estimates from Lemma  \ref{11aaalemdfghkkmaddffffdfffgg4sssdddd5630} yield  the following Lemma.

\begin{proposition}\label{11aaaerrrlemdfghkkmaddffffdfffgg4sssdddd5630}
Under the assumptions of Lemma \ref{sedddlemmaddffffdfffgg4sssdddd5630}, 
there exists $C_{*,2}> 0$ 
 such that for all $\varepsilon\in (0, 1)$,
\begin{equation}
\int_0^\infty\int_{\Omega}n_{\varepsilon}^{\frac{10}{3}}\leq C_{*,2} ~~~\mbox{for all}~~~~\varepsilon\in(0,1).
\label{1.1ddfghssdddssssssddsyddsssffssddduisda}
\end{equation}
\end{proposition}
\begin{proof}
Firstly, in view of \dref{wwwwwcz2.511ssssdsdddffsssdfffsddsdsssdddfggsddffggg4ddfggg114}, there exists a positive constant $\kappa_{1,**}$ such that
$$\|{ n_{\varepsilon}}(\cdot,t)\|_{L^{2}(\Omega)}\leq \kappa_{1,**}~~\mbox{for all}~~t>0.$$
Therefore,
in light of the Gagliardo--Nirenberg inequality, we derive from Lemma  \ref{11aaalemdfghkkmaddffffdfffgg4sssdddd5630} that there exist positive constants $\kappa_{2,**}$ and $\kappa_{3,**}$  such that
\begin{equation}
\begin{array}{rl}
\disp\int_{0}^\infty\disp\int_{\Omega} n_{\varepsilon}^{\frac{10}{3}}dt \leq&\disp{\kappa_{2,**}\int_{0}^\infty\left(\| \nabla{ n_{\varepsilon}}\|^{2}_{L^{2}(\Omega)}\|{ n_{\varepsilon}}\|^{{\frac{4}{3}}}_{L^{2}(\Omega)}+
\|{ n_{\varepsilon}}\|^{{\frac{10}{3}}}_{L^{2}(\Omega)}\right)dt}\\
\leq&\disp{\kappa_{3,**}.}\\
\end{array}
\label{ddffbnmbnddfgffssdddg56777gssssjjkkuuiicz2dvgbhh.t8ddhhhyuiihjj}
\end{equation}
\end{proof}

We are now prepared to prove the claimed asymptotic behavior of $u_\varepsilon$.
In fact,  exploiting the basic identity \dref{wwwwwcz2.511ssssdsdddffsssdfffsddsdsssdddfggsddffggg4ddfggg114}, this time in a more elaborate
manner, yields the  decay of $u_\varepsilon$  with respect to the norm in $L^2 (\Omega)$ in the first instance.

\begin{lemma}\label{11aaalemmaddffffdsddfffffgg4sssdddd5630}
Under the assumptions of Lemma \ref{sedddlemmaddffffdfffgg4sssdddd5630},  for any $\eta > 0$,  there are $T_{*,3} > 0,\rho_{*,3}$ and $C_{*,3}>0$ such that for any $t > T_{*,3} $
such that whenever $\varepsilon\in (0, 1)$, we have
\begin{equation}\|u_\varepsilon(\cdot,t)\|_{L^2(\Omega)}\leq C_{*,3}e^{-\rho_{*,3} t},
\label{11111hhxxcdfvhhhvssssssssscccjjghjjsdgggddddgdddfffddffssddcssdz2.5}
\end{equation}
\begin{equation}\int_{t}^{t+1}\|\nabla u_\varepsilon(\cdot,s)\|_{L^2(\Omega)}^2ds<\eta
\label{11111hhxxcdfvhhhvsssssddfffssddffsscccjjghjjsdggggdddfffddffssddcssdz2.5}
\end{equation}
as well as
\begin{equation}\int_{t}^{t+1}\|u_\varepsilon(\cdot,s)\|_{L^q(\Omega)}^2ds<\eta
\label{11111hhxxcdfvhhhvsssddffssddfffssddffsscccjjghjjsdggggdddfffddffssddcssdz2.5}
\end{equation}
and
\begin{equation}\int_{t}^{t+1}\|u_\varepsilon(\cdot,s)\|_{L^q(\Omega)}ds<\eta
\label{11111hhxxcdfvhhhvddfffsssddffssddfffssddffsscccjjghjjsdggggdddfffddffssddcssdz2.5}
\end{equation}
for any $q\in[1,6)$.
\end{lemma}
\begin{proof}
 From straightforward calculations,  while relying on \dref{dd1.1fghyuisdakkkllljjjkk}, we derive  the third  equation in \dref{1.1fghyuisda} that 
\begin{equation}
\begin{array}{rl}
&\disp{\frac{1}{2}\frac{d}{dt}\|u_{\varepsilon}(\cdot,t)\|^{{2}}_{L^{{2}}(\Omega)}}
\\
=&\disp{-
\int_{\Omega}|\nabla u_{\varepsilon}|^2+\int_{\Omega}n_{\varepsilon}\nabla\phi\cdot u_{\varepsilon}-\int_{\Omega}\nabla P_{\varepsilon}\cdot u_{\varepsilon}}\\
=&\disp{-
\int_{\Omega}|\nabla u_{\varepsilon}|^2+\int_{\Omega}(n_{\varepsilon}-\bar{n}_0)\nabla\phi\cdot u_{\varepsilon}}\\
\leq&\disp{-
\int_{\Omega}|\nabla u_{\varepsilon}|^2+\|\nabla\phi\|_{L^\infty(\Omega)}\left(\int_{\Omega}|n_{\varepsilon}-\bar{n}_0|^2\right)^{\frac{1}{2}}\left(\int_{\Omega} |u_{\varepsilon}|^2\right)^{\frac{1}{2}}~~\mbox{for all}~~t>0,}\\
\end{array}
\label{cddddz2.51kkk1ssssdfssddjjkkkggsddffggg4ddfggg114}
\end{equation}
where we have used the fact that  $\nabla\cdot u_{\varepsilon} = 0$ and $u_{\varepsilon} |_{\partial\Omega} = 0$.
Due to the Poincar\'{e} inequality again, there exists a positive constant $\eta_N$ such that
$$ \eta_N\int_{\Omega} |u_{\varepsilon}|^2\leq \int_{\Omega}|\nabla u_{\varepsilon}|^2,$$
therefore, collecting \dref{wwwwwcz2.511ssssdsdddffsssdfffsddsdsssdddfggsddffggg4ddfggg114} and \dref{cddddz2.51kkk1ssssdfssddjjkkkggsddffggg4ddfggg114}, we derive that for some positive constants $\kappa_{1,***}$ and $\varrho_{1,***}$
\begin{equation}
\begin{array}{rl}
\disp\disp \int_{\Omega}|u_{\varepsilon}(x,t)|^2dx&\leq\disp{\kappa_{1,***}e^{-\varrho_{1,***}t}~~~\mbox{for all}~~t>0}\\
\end{array}
\label{hhxxcdfvhhhvsssssssssjjdfffddffssddcssdz2.5}
\end{equation}
and
\begin{equation}\int_{t}^{t+1}\|\nabla u_{\varepsilon}(\cdot,s)\|_{L^2(\Omega)}^2ds\rightarrow 0 ~~~\mbox{as}~~~t\rightarrow\infty
\label{11111hhxxcdfvhhhvddddsssssddfffssddffsscccjjghjjsdggggdddfffddffssddcssdz2.5}
\end{equation}
and thereby proves \dref{11111hhxxcdfvhhhvssssssssscccjjghjjsdgggddddgdddfffddffssddcssdz2.5}--\dref{11111hhxxcdfvhhhvsssssddfffssddffsscccjjghjjsdggggdddfffddffssddcssdz2.5}.
Finally, we make use of the embedding
$W^{1,2}(\Omega)\hookrightarrow L^q (\Omega)$ (for any $q\in[1,6)$) and the
Young inequality  to find 
 that \dref{11111hhxxcdfvhhhvsssddffssddfffssddffsscccjjghjjsdggggdddfffddffssddcssdz2.5} and \dref{11111hhxxcdfvhhhvddfffsssddffssddfffssddffsscccjjghjjsdggggdddfffddffssddcssdz2.5} hold.
\end{proof}
With the decay property of $n_\varepsilon(\cdot,t)-\bar{n}_0$ (see {Lemma} \ref{11aaalemdfghkkmaddffffdfffgg4sssdddd5630})  at hand, applying Lemma \ref{11aaalemmaddffffdsddfffffgg4sssdddd5630}, we can now make sure that \dref{wwwwwcz2.511ssssdsdddffsssdfffsddsdsssdddfggsddffggg4ddfggg114} implies the following 
%
a certain eventual regularity and
decay of $u_\varepsilon$ in $L^p(\Omega)$ with some $p\geq6$.
\begin{lemma}\label{aaalemmaddffffsddddfffgg4sssdddd5630}
Under the assumptions of Lemma \ref{sedddlemmaddffffdfffgg4sssdddd5630},
 then for any $p \in[6,\infty)$, 
 and $\varepsilon\in(0,1 )$, the solution of
\dref{1.1fghyuisda} satisfies
 \begin{equation}\|u_{\varepsilon}\|_{L^\infty((t,\infty);L^p(\Omega))}\rightarrow0~~~\mbox{as}~~~ t\rightarrow\infty.
\label{11111hhxxcdfvhhhvssssssssscccjjghjjsdgggddddgdddffddfffddffssddcssdz2.5}
\end{equation} 
\end{lemma}
\begin{proof}
We let $p\geq 6$ and choose $q \in (3,6)$ which is close to $6$ (e.g. $q=\frac{6p}{p+1}$) such that
\begin{equation}
\begin{array}{rl}
\disp{\frac{1}{2}-\frac{3}{2p}-3(\frac{1}{q}-\frac{1}{p})} >\disp{0.}\\
\end{array}
\label{hhxxcdfvhhhvsssjjdfffssssddssdddcz2.5}
\end{equation}
 Now, we can fix $\gamma_0=\frac{3}{2}(\frac{1}{q}-\frac{1}{p})$ and $r_0$ such that $r_0 \in(\frac{3}{2}(\frac{1}{2}-\frac{1}{p}), 1)$. Then,
in view of Lemmas \ref{llssdrffmmgghhjjjnnccvvccvvkkkkgghhkkllvvlemma45630} and \ref{llssdrffmmggnnccvvccvvkkkkgghhkkllvvlemma45630}, for any $\mu\in(0,\min\{\frac{\rho_{*,1}}{2},\lambda_1\})$,
we  recall 
the $L^p$-$L^q$ estimates for the Stokes semigroup (see also Lemma 2.3 of \cite{CaoCaoLiitffg11}) to
choose positive constants $\tilde{k}_1 , \tilde{k}_2 , \tilde{k}_3 $ and  $\tilde{k}_4$  such that
\begin{equation}
\|e^{-tA}\mathcal{P}\varphi\|_{L^p(\Omega)} \leq \tilde{\kappa}_1t^{-\gamma_0}\|\varphi\|_{L^q(\Omega)} ~~~\mbox{for all}~~~ t\in(0,2)~~~\mbox{and any}~~~\varphi\in L^{q}(\Omega)
\label{1.1ddfgsssdhssdddyddffssddduisda}
\end{equation}
as well as
\begin{equation}
\| e^{-tA}\mathcal{P}\nabla\cdot\varphi\|_{L^p(\Omega)} \leq \tilde{\kappa}_2t^{-\frac{1}{2}-\frac{3}{2}(\frac{2}{p}-\frac{1}{p})}e^{-\mu t}\|\varphi\|_{L^{\frac{p}{2}}(\Omega)} ~~~\mbox{for all}~~~ t\in(0,2)~~~\mbox{and any}~~~\varphi\in L^{\frac{p}{2}}(\Omega)
\label{1.1ddssdfghssdddyssddduisda}
\end{equation}
and
\begin{equation}
\begin{array}{rl}
\|e^{-tA}\mathcal{P}\varphi\|_{L^p(\Omega)} \leq &\|A^{r_0}e^{-tA}A^{-r_0}\mathcal{P}\varphi\|_{L^p(\Omega)}\\
\leq &\tilde{\kappa}_3t^{-r_0}e^{-\mu t}\|A^{-r_0}\mathcal{P}\varphi\|_{L^p(\Omega)} \\
\leq &\tilde{\kappa}_4t^{-r_0}e^{-\mu t}\|\varphi\|_{L^2(\Omega)} ~~~\mbox{for all}~~~ t\in(0,2)~~~\mbox{and any}~~~\varphi\in L^{2}(\Omega),\\
\end{array}\label{1.sss1ddddfghhfghssdddyssddduisda}
\end{equation}
where $A$ is the Stokes operator as well as $\lambda_1=\inf Re \sigma(A) > 0$ and $\rho_{*,1}$ is   the same as {Lemma} \ref{11aaalemdfghkkmaddffffdfffgg4sssdddd5630}.
Next, in view of \dref{hhxxcdfvhhhvsssjjdfffssssddssdddcz2.5} and using  $p\geq6$, $q\in(3,6)$, we have
 $$-2\gamma_0-\frac{1}{2}-\frac{3}{2p}>-1~~~\mbox{and}~~ -\frac{1}{2}-\frac{3}{2p}>-1.$$
 Therefore,
\begin{equation}
2\tilde{\kappa}_2\int_{ 0}^1(1-\tau)^{-\frac{1}{2}-\frac{3}{2p}}\tau^{-2\gamma_0}d\tau<+\infty,
 \label{2222dddddf1.1ddfghssdddyddffssddduisda}
\end{equation}
where $\tilde{\kappa}_2$ is the same as \dref{1.1ddssdfghssdddyssddduisda}.

We finally invoke the Cauchy-Schwarz inequality to fix $\tilde{\kappa}_5 > 0$ such that
$$\|v\oplus w\|_{L^{\frac{p}{2}}(\Omega)}\leq \tilde{\kappa}_5\|v\|_{L^{p}(\Omega)}\| w\|_{L^{p}(\Omega)}~~~\mbox{for all}~~v,w\in L^{p}(\Omega).$$
We now take any positive $\eta\leq \eta_0:=\min\{\frac{1}{3\tilde{\kappa}_2\tilde{\kappa}_5|\kappa|2^{\frac{1}{2}-\frac{3}{2p}-\gamma_0}\int_{ 0}^1(1-\tau)^{-\frac{1}{2}-\frac{3}{2p}}\tau^{-2\gamma_0}d\tau},1\}$
 and let $$\eta_1 :=\frac{\eta}{3\tilde{\kappa}_2}~~
\mbox{and}~~~\eta_2:=\frac{\eta }{3\cdot2^{\gamma_0}\tilde{\kappa}_4\|\nabla\phi\|_{L^\infty(\Omega)}\int_{0}^{+\infty}s^{-r_0}
e^{-\mu s}ds},$$
 and note that as a consequence of {Lemma} \ref{11aaalemdfghkkmaddffffdfffgg4sssdddd5630} and Lemma  \ref{11aaalemmaddffffdsddfffffgg4sssdddd5630},
for any such $\eta$ we can pick $T(\eta) > 2$ such that for all $t > T(\eta) - 2$
\begin{equation}\int_{t}^{t+1}\|u_\varepsilon(\cdot,s)\|_{L^q(\Omega)}^2ds\leq \eta_1^2
\label{23344223311111hhxxcdfvhhhvddfssdddffsssddffssddfffssddffsscccjjghjjsdggggdddfffddffssddcssdz2.5}
\end{equation}
and
\begin{equation}\|n_\varepsilon(\cdot,t)-\bar{n}_0\|_{L^{2}(\Omega)}\leq\eta_2.
\label{23344223311111hhxxcdfvhhssddddddhvddfssdddffsssddffssddfffssddffsscccjjghjjsdggggdddfffddffssddcssdz2.5}
\end{equation}
To see that these choices ensure that
\begin{equation}\|u_\varepsilon(\cdot,t_0)\|_{L^p(\Omega)}\leq\eta,~~\mbox{for all}~~t_0>T(\eta),
\label{11111hhxxcdfvhhhssddsdddvdssddddfssdddffsssddffssddfffssddffsscccjjghjjsdggggdddfffddffssddcssdz2.5}
\end{equation}
we fix any such $t_0$ and then infer from \dref{23344223311111hhxxcdfvhhhvddfssdddffsssddffssddfffssddffsscccjjghjjsdggggdddfffddffssddcssdz2.5} that there exists $t_*\in(t_0-2,t_0-1)$
such that
\begin{equation}\|u_\varepsilon(\cdot,t_*)\|_{L^q(\Omega)}\leq \eta_1.
\label{23344ssdd22331111ssdd1hhxxcdfvhhhvddfssdddffsssddffssddfffssddffsscccjjghjjsdggggdddfffddffssddcssdz2.5}
\end{equation}
We now follow a standard reasoning to construct,
independently of $u_\varepsilon$, another weak solution $\tilde{u}_\varepsilon$ of the initial value problem associated
with the Navier-Stokes system $\tilde{u}_{\varepsilon t} +A\tilde{u}_{\varepsilon} = -\mathcal{P}[\kappa (Y_{\varepsilon}u_{\varepsilon} \cdot \nabla)u_{\varepsilon}]+\mathcal{P}[n_{\varepsilon}\nabla \phi]$ in $\Omega\times(t_*,t_* +2)$
with $\tilde{u}_{\varepsilon}(\cdot,t_*)=u_{\varepsilon}(\cdot,t_*)$  and some favorable additional properties, finally implying
by a uniqueness argument that actually $u_{\varepsilon}=\tilde{u}_{\varepsilon}$ and that hence $u_{\varepsilon}$ itself has these
properties. To this end, for the above $t_*,p,\gamma_0$ and $\eta$, we let
\begin{equation}X:=\{v\in C^0((t_*,t_*+2];L^p_\sigma(\Omega))\left|\right \|v\|_X:=\sup_{t\in(t_*,t_*+2)}(t-t_*)^{\gamma_0}\|v(\cdot,t)\|_{L^p(\Omega)}<\infty\}.
\label{cz2.571hhhhh51lllllccvvddfgghddfccvvhjjjkkhhggjjlsdddlll}
\end{equation}
Then we consider the mapping
$\Psi$ defined by
$$\varphi(v) (\cdot,t):= e^{-(t-t_*)A}u_{\varepsilon}(\cdot,t_*) +\int_{t_*}^te^{-(t-s)A}
\mathcal{P}\left[-\kappa\nabla\cdot(Y_\varepsilon v(\cdot,\tau)\otimes v(\cdot,\tau) +n_\varepsilon (\cdot,\tau)\nabla\phi\right]d\tau~~t\in (t_*,t_*+ 2],$$
for $v$ belonging to the closed subset
$$S:=\left\{v\in X|\|v\|\leq\eta\right\}$$
of $X$.
 Then for $v\in S$ we can use \dref{1.1ddfgsssdhssdddyddffssddduisda}--\dref{1.sss1ddddfghhfghssdddyssddduisda} as well as \dref{23344223311111hhxxcdfvhhssddddddhvddfssdddffsssddffssddfffssddffsscccjjghjjsdggggdddfffddffssddcssdz2.5} and \dref{23344ssdd22331111ssdd1hhxxcdfvhhhvddfssdddffsssddffssddfffssddffsscccjjghjjsdggggdddfffddffssddcssdz2.5} to estimate
 \begin{equation}
\begin{array}{rl}
&\|\varphi(v) (\cdot, t)\|_{L^p(\Omega)}\\
\leq&\disp{\tilde{\kappa}_1(t- t_*)^{-\gamma_0}\|u_\varepsilon(\cdot, t_*)\|_{L^q(\Omega)} +\tilde{\kappa}_2|\kappa|
\int_{ t_*}^t(t-\tau)^{-\frac{1}{2}-\frac{3}{2}(\frac{2}{p}-\frac{1}{p})}e^{-\mu(t-\tau)}\|v(\cdot, \tau)\oplus v(\cdot, \tau)\|_{L^{\frac{p}{2}}(\Omega)}d\tau}\\
&\disp{+\int_{ t_*}^t\|\mathcal{P}\left\{[n_\varepsilon(\cdot,\tau)-\bar{n}_0]\nabla\phi\right\}\|_{L^p(\Omega)}d\tau}\\
\leq&\disp{\tilde{\kappa}_1(t- t_*)^{-\gamma_0}\|u_\varepsilon(\cdot, t_*)\|_{L^q(\Omega)} +
\tilde{\kappa}_2|\kappa|\int_{ t_*}^t(t-\tau)^{-\frac{1}{2}-\frac{3}{2}(\frac{2}{p}-\frac{1}{p})}e^{-\mu(t-\tau)}\|v(\cdot, \tau)\oplus v(\cdot, \tau)\|_{L^{\frac{p}{2}}(\Omega)}d\tau}\\
&\disp{+\tilde{\kappa}_4\|\nabla\phi\|_{L^\infty(\Omega)}\int_{ t_*}^t(t-\tau)^{-r_0}
e^{-\mu(t-\tau)}\|n_\varepsilon(\cdot,\tau)-\bar{n}_0\|_{L^2(\Omega)}d\tau}\\
\leq&\disp{\tilde{\kappa}_1(t- t_*)^{-\gamma_0}\|u_\varepsilon(\cdot, t_*)\|_{L^q(\Omega)} +
\tilde{\kappa}_2\tilde{\kappa}_5|\kappa|\int_{ t_*}^t(t-\tau)^{-\frac{1}{2}-\frac{3}{2p}(\frac{2}{p}-\frac{1}{p})}e^{-\mu(t-\tau)}\|v(\cdot, \tau)\|_{L^{p}(\Omega)}^2d\tau}\\
&\disp{+\tilde{\kappa}_4\|\nabla\phi\|_{L^\infty(\Omega)}\eta_2\int_{ t_*}^{ t_*+2}(t-\tau)^{-r_0}
e^{-\mu(t-\tau)}d\tau}\\
\leq&\disp{\tilde{\kappa}_1(t- t_*)^{-\gamma_0}\eta_1 +
\tilde{\kappa}_2\tilde{\kappa}_5|\kappa|\eta^2\int_{ t_*}^t(t-\tau)^{-\frac{1}{2}-\frac{3}{2p}}\tau^{-2\gamma_0}d\tau}\\
&\disp{+\tilde{\kappa}_4\|\nabla\phi\|_{L^\infty(\Omega)}\eta_2\int_{0}^{+\infty}s^{-r_0}
e^{-\mu s}ds~~~\mbox{for all}~~t\in(t_*,t_*+2),}\\
\end{array}
\label{cz2.571hhhhh51lllllccvvhddfccvvhjjjkkhhggjjlsdddlll}
\end{equation}
where $\mu$ and $r_0$ are the same as \dref{1.sss1ddddfghhfghssdddyssddduisda}.
Therefore, in light of the definition of $\gamma_0$,
\dref{2222dddddf1.1ddfghssdddyddffssddduisda}, \dref{23344ssdd22331111ssdd1hhxxcdfvhhhvddfssdddffsssddffssddfffssddffsscccjjghjjsdggggdddfffddffssddcssdz2.5} as well as \dref{cz2.571hhhhh51lllllccvvddfgghddfccvvhjjjkkhhggjjlsdddlll} and \dref{cz2.571hhhhh51lllllccvvhddfccvvhjjjkkhhggjjlsdddlll},  we see that
for every $t\in ( t_* , t_* + 2)$ and every $v\in S$
\begin{equation}
\begin{array}{rl}
&(t- t_*)^{\gamma_0}\|\varphi(v) (\cdot, t)\|_{L^p(\Omega)}\\
\leq&\disp{\tilde{\kappa}_1\eta_1 +
\tilde{\kappa}_2\tilde{\kappa}_5|\kappa|\eta^2(t- t_*)^{\gamma_0-\frac{1}{2}-\frac{3}{2p}-2\gamma_0+1}\int_{ 0}^1(1-\tau)^{-\frac{1}{2}-\frac{3}{2p}}\tau^{-2\gamma_0}d\tau}\\
&\disp{+\tilde{\kappa}_4\|\nabla\phi\|_{L^\infty(\Omega)}\eta_2(t- t_*)^{\gamma_0}\int_{0}^{+\infty}s^{-r_0}
e^{-\mu s}ds}\\
\leq&\disp{\tilde{\kappa}_1\eta_1 +
\tilde{\kappa}_2\tilde{\kappa}_5|\kappa|\eta^22^{\frac{1}{2}-\frac{3}{2p}-\gamma_0}\int_{ 0}^1(1-\tau)^{-\frac{1}{2}-\frac{3}{2p}}\tau^{-2\gamma_0}d\tau}\\
&\disp{+\tilde{\kappa}_4\|\nabla\phi\|_{L^\infty(\Omega)}\eta_22^{\gamma_0}\int_{0}^{+\infty}s^{-r_0}
e^{-\mu s}ds}\\
\leq&\disp{\frac{\eta}{3}+\frac{\eta}{3}+\frac{\eta}{3}}\\
<&\disp{\eta,}\\
\end{array}
\label{cz2.571hhhhh51lllllccvvhddfccvvhjjjkkhhggjjllll}
\end{equation}
from which it readily follows that $\varphi(S)\subset S$.
 Likewise, for $v\in S$  and $w\in S$ we can
use Lemma \ref{llssdrffmmggnnccvvccvvkkkkgghhkkllvvlemma45630} to find that
\begin{equation}
\begin{array}{rl}
&\|(\varphi(v)-\varphi(w))(\cdot, t) \|_{L^p(\Omega)}\\
\leq&\disp{
\tilde{\kappa}_2|\kappa|\int_{ t_*}^t(t-\tau)^{-\frac{1}{2}-\frac{3}{2p}}\|v(\cdot, \tau)\oplus v(\cdot, \tau)-w(\cdot, \tau)\oplus w(\cdot, \tau)\|_{L^{\frac{p}{2}}(\Omega)}d\tau}\\
=&\disp{
\tilde{\kappa}_2|\kappa|\int_{ t_*}^t(t-\tau)^{-\frac{1}{2}-\frac{3}{2p}}\|v(\cdot, \tau)\oplus (v(\cdot, \tau)-w(\cdot, \tau))+(v(\cdot, \tau)-w(\cdot, \tau))\oplus w(\cdot, \tau)\|_{L^{\frac{p}{2}}(\Omega)}d\tau}\\
\leq&\disp{
\tilde{\kappa}_2\tilde{\kappa}_5|\kappa|\int_{ t_*}^t(t-\tau)^{-\frac{1}{2}-\frac{3}{2p}}(\|v(\cdot, \tau)\|_{L^{p}}+\|w(\cdot, \tau)\|_{L^{p}})\|v(\cdot, \tau)-w(\cdot, \tau)\|_{L^{p}(\Omega)}d\tau}\\
\leq&\disp{
\tilde{\kappa}_2\tilde{\kappa}_5|\kappa|\int_{t_*}^t\tau^{-\frac{1}{2}-\frac{3}{2p}}2\eta \tau ^{-\gamma_0}\cdot \tau ^{-\gamma_0}\|v-w\|_{X} d\tau~~~\mbox{for all}~~t\in ( t_* , t_* + 2),}\\
\end{array}
\label{234cz2.571hhhhh51lllllccvvhsdddddfccvvhjjjkkhhggjjlsdddlll}
\end{equation}
which implies that
\begin{equation}
\begin{array}{rl}
&(t- t_*)^{\gamma_0}\|(\varphi(v)-\varphi(w))(\cdot, t)\|_{L^p(\Omega)}\\
\leq&\disp{
2^{\frac{3}{2}-\frac{3}{2p}-\gamma_0} \tilde{\kappa}_2\tilde{\kappa}_5|\kappa|\int_{ 0}^1(1-\tau)^{-\frac{1}{2}-\frac{3}{2p}}\tau^{-2\gamma_0}d\tau
\eta\|v-w\|_{X}~~~\mbox{for all}~~t\in ( t_* , t_* + 2).}\\
\end{array}
\label{234cz2.571hhssdhhh51lllllccvvhsdddddfccvvhjjjkkhhggjjlsdddlll}
\end{equation}
Recalling
$$\eta\leq\frac{1}{3\tilde{\kappa}_2\tilde{\kappa}_5|\kappa|2^{\frac{1}{2}-\frac{3}{2p}-\gamma_0}\int_{ 0}^1(1-\tau)^{-\frac{1}{2}-\frac{3}{2p}}\tau^{-2\gamma_0}d\tau},$$
thus,
$$2^{\frac{3}{2}-\frac{3}{2p}-\gamma_0} \tilde{\kappa}_2\tilde{\kappa}_5|\kappa|\int_{ 0}^1(1-\tau)^{-\frac{1}{2}-\frac{3}{2p}}\tau^{-2\gamma_0}d\tau
\eta\leq \frac{2}{3}<1,$$
so that,  \dref{234cz2.571hhssdhhh51lllllccvvhsdddddfccvvhjjjkkhhggjjlsdddlll} implies that $\varphi$ acts as a contraction on
$S$ and hence possesses a unique fixed point $\tilde{u}_\varepsilon$. In view of
a standard arguments (see e.g.
 \cite{Sohr}), it
follows that $\tilde{u}_\varepsilon$ in fact is a weak solution of the Navier-Stokes subsystem of \dref{1.1fghyuisda} in
$\Omega\times(t_*,t_* +2)$ subject to the initial condition $\tilde{u}_{\varepsilon}(\cdot,t_*)=u_{\varepsilon}(\cdot,t_*)$. 
Next,
using  the definition of $S$ and $X$, we also derive that
\begin{equation}\int_{t_*}^{t_*+2}\|\tilde{u}_\varepsilon(\cdot,t)\|_{L^p(\Omega)}^{q_0}dt\leq\eta,
\label{11111hhxxcdfvhhhsssddsddvdssddddfssdddffsssddffssddfffssddffsscccjjghjjsdggggdddfffddffssddcssdz2.5}
\end{equation}
where
$$q_0=\frac{2p}{p-3}$$
satisfy
\begin{equation}\frac{2}{q_0}+\frac{3}{p}=1~~\mbox{and}~~q_0\gamma_0=\frac{2p}{p-3}\times\frac{3}{2}(\frac{1}{q}-\frac{1}{p})<1~~~\mbox{by using}~~q>3.\label{ssddssd11111hhxxcdfvhhhsssddsddvdssddddfssdddffsssddffssddfffssddffsscccjjghjjsdggggdddfffddffssddcssdz2.5}
\end{equation}
Collecting \dref{11111hhxxcdfvhhhsssddsddvdssddddfssdddffsssddffssddfffssddffsscccjjghjjsdggggdddfffddffssddcssdz2.5}
 and \dref{ssddssd11111hhxxcdfvhhhsssddsddvdssddddfssdddffsssddffssddfffssddffsscccjjghjjsdggggdddfffddffssddcssdz2.5}, a well-known uniqueness
property of the Navier-Stokes equations (see e.g. Theorem V.2.5.1 of  \cite{Sohr}) entails that $\tilde{u}_\varepsilon$ must coincide with $u_\varepsilon$
in $\Omega\times(t_*,t_* +2)$.
 In particular, since $t_0\in  (t_*+ 1,t_* + 2)$, \dref{11111hhxxcdfvhhhsssddsddvdssddddfssdddffsssddffssddfffssddffsscccjjghjjsdggggdddfffddffssddcssdz2.5} thus  implies that
$$\|u_\varepsilon(\cdot,t_0)\|_{L^p(\Omega)}\leq \|\tilde{u}_\varepsilon(\cdot,t_0)\|_{L^p(\Omega)}\leq\eta(t_0-t_*)^{-\gamma_0}\leq\eta,
$$
and thereby establishes \dref{11111hhxxcdfvhhhssddsdddvdssddddfssdddffsssddffssddfffssddffsscccjjghjjsdggggdddfffddffssddcssdz2.5},
which together with the fact that $\eta\in(0,\eta_0]$ was
arbitrary yields to \dref{11111hhxxcdfvhhhvssssssssscccjjghjjsdgggddddgdddffddfffddffssddcssdz2.5}.
\end{proof}

With the help of {Lemma} \ref{11aaalemdfghkkmaddffffdfffgg4sssdddd5630} as well as {Lemma} \ref{lemmaghjffggsjjjjjsddgghhmk4563025xxhjklojjkkk}  and Lemma \ref{aaalemmaddffffsddddfffgg4sssdddd5630}, we can achieve the boundedness of $\nabla c_{\varepsilon}$ in
$L^\infty((T, \infty);L^4(\Omega))$ and $n_{\varepsilon}$ in $L^\infty(\Omega\times(T, \infty))$  for  some large  $T>2$.
\begin{lemma}\label{11x344ccffggfhhlemma45625xxhjhjuioookloghyui}
Under the assumptions of Theorem \ref{thaaaeorem3}, then 
one can find that $T_{*,4}> 2,\rho_{*,4}\in(0,1)$ and $C_{*,4}> 0$
   such that 
   for each $\varepsilon\in (0, 1)$,
\begin{equation}
\int_\Omega|\nabla c_\varepsilon (x,t)|^{2}< C_{*,4}e^{-\rho_{*,4} t}~~\mbox{for all}~~ t>T_{*,4}
\label{222ssssddddddddxxxcvvddcvdhjjjjdffbbggddczv.5ghcfg924ghyuji}
\end{equation}
and
\begin{equation}
\int_\Omega|\nabla c_\varepsilon (x,t)|^{4}<C_{*,4}e^{-\rho_{*,4} t}~~\mbox{for all}~~ t>T_{*,4}.
\label{ddxxxcvvddcvdhjddfffjjjdffbbggddczv.5ghcfg924ghyuji}
\end{equation}
\end{lemma}
\begin{proof}
To show this Lemma,  it will be convenient to introduce the following notation:
$$
w_\varepsilon=c_\varepsilon-\bar{n}_0.
$$
Accordingly, in light of the second equation of \dref{1.1fghyuisda}, by a
  straightforward computation, 
  we see $w_\varepsilon$ has the following property:
$$w_{\varepsilon t}-\Delta w_{\varepsilon}+u_{\varepsilon}\cdot\nabla w_{\varepsilon}+w_{\varepsilon}=n_{\varepsilon}-\bar{n}_0,\quad
x\in \Omega,\; t>0.$$
For   any $\eta\in(0,1)$,
let us apply Corollary \ref{11aaalemdfghkkmaddffffdfffgg4sssdddd5630},
Lemma \ref{aaalemmaddffffsddddfffgg4sssdddd5630} to fix $T_{1,*} > 2$ and
constants $\rho_{1,*}\in(0,1),C_{1,*}>0$ and $C_{2,*}>0$ such that for any $t>T_{1,*} $
 \begin{equation}\|u_{\varepsilon}\|_{L^\infty((t,+\infty);L^p(\Omega))}< \eta~~~\mbox{for all}~~p>1
\label{11111hhxxcdfvhssddffhhvsssssssssddfffcccjjghjjsdgggddddddgdddffddfddddffddffssddcssdz2.5}
\end{equation}
as well as
 \begin{equation}\|n_{\varepsilon}(\cdot,t)-\bar{n}_0\|_{L^\infty((t,+\infty);L^2(\Omega))}<C_{1,*}e^{-\rho_{1,*} t}
\label{11111hhxxcdfvhhhvssssssghhhhsssddfffcccjjghjjsdgggddddgdddffddfddddffddffssddcssdz2.5}
\end{equation}
and
 \begin{equation}\|w_\varepsilon\|_{L^\infty((t,+\infty);L^2(\Omega))}<C_{2,*}e^{-\rho_{1,*} t}.
\label{11111hhxxcdfvhhhvffgggsssssssssddfffcccjjghjjsdgggdssdddgdddffddfddddffddffssddcssdz2.5}
\end{equation}
On the other hand, by {Lemma} \ref{11aaalemdfghkkmaddffffdfffgg4sssdddd5630}, we also have
\begin{equation}
\int_{T_{1,*}}^{\infty}\int_{\Omega}|\nabla w_{\varepsilon}|^2 \leq C_{3,*} ~~~\mbox{for all}~~~~\varepsilon\in(0,1)
\label{ssddddff1.1ddfghssdddssssssddsyddsssffssddduisda}
\end{equation}
and some positive constant $C_{3,*}.$
Then \dref{ssddddff1.1ddfghssdddssssssddsyddsssffssddduisda} in particular implies that 
we can find $\tilde{t}_{*}\in(T_{1,*},\infty)$ such that
 \begin{equation}\|\nabla w_{\varepsilon}(\cdot,\tilde{t}_{*})\|_{L^2(\Omega)}\leq C_{3,*}.
\label{223311111hhxxcdfvhhhvsssssssssddfffcccjjgssddhjjsdggdffgggddddddgdddffddfddddffddffssddcssdz2.5}
\end{equation}

Next,  testing the second equation in \dref{1.1fghyuisda} against $-\Delta w_{\varepsilon}$ and employing  the Young inequality  
yields
\begin{equation}
\begin{array}{rl}
\disp{\frac{1}{{2}}\frac{d}{dt} \|\nabla w_{\varepsilon}(\cdot,t)\|^{{{2}}}_{L^{{2}}(\Omega)}}= &\disp{\int_{\Omega}  -\Delta w_{\varepsilon}(\Delta w_{\varepsilon}-w_{\varepsilon}+n_{\varepsilon}-\bar{n}_0-u_{\varepsilon}\cdot\nabla  w_{\varepsilon})}
\\
\leq&\disp{-\frac{1}{4}\int_{\Omega}  |\Delta w_{\varepsilon}|^2-\int_{\Omega} |\nabla w_{\varepsilon}|^{2}+
\int_\Omega (n_{\varepsilon}-\bar{n}_0)^2+\int_\Omega |u_{\varepsilon}\cdot\nabla  w_{\varepsilon}||\Delta w_{\varepsilon}|}\\
\end{array}
\label{cz2.5ghju4sdssssddffgf8156}
\end{equation}
for all $t>t_{*}$.
Next, one needs to estimate the last term on the right-hand side of \dref{cz2.5ghju4sdssssddffgf8156}.
Indeed, in view of   \dref{11111hhxxcdfvhssddffhhvsssssssssddfffcccjjghjjsdgggddddddgdddffddfddddffddffssddcssdz2.5}
  and \dref{11111hhxxcdfvhhhvffgggsssssssssddfffcccjjghjjsdgggdssdddgdddffddfddddffddffssddcssdz2.5},  we derive from  the H\"{o}lder inequality,
the Gagliardo--Nirenberg inequality, and  the Young inequality that there exist positive constants
$C_{4,*}$ as well as $C_{5,*}$ and $C_{6,*}$ such that
 \begin{equation}
\begin{array}{rl}
\disp{\int_\Omega |u_{\varepsilon}\cdot\nabla  w_{\varepsilon}||\Delta w_{\varepsilon}|}
\leq&\disp{\|u_{\varepsilon}\|_{L^{6}(\Omega)}\|\nabla w_{\varepsilon}\|_{L^{3}(\Omega)}}\|\Delta w_{\varepsilon}\|_{L^{2}(\Omega)}\\
\leq&\disp{C_{4,*}\|\nabla w_{\varepsilon}\|_{L^{3}(\Omega)}}\|\Delta w_{\varepsilon}\|_{L^{2}(\Omega)}\\
\leq&\disp{C_{5,*}\left(\|\Delta w_{\varepsilon}\|^{\frac{3}{4}}_{L^2(\Omega)}\| w_{\varepsilon}\|^{\frac{1}{4}}_{L^2(\Omega)}+\| w_{\varepsilon}\|^{2}_{L^2(\Omega)}\right)\|\Delta w_{\varepsilon}\|_{L^{2}(\Omega)}}\\
\leq&\disp{C_{5,*}\max\{C_{2,*}^2,C_{2,*}^{\frac{1}{4}}\}e^{-\frac{\rho_{1,*}}{4} t}(\|\Delta w_{\varepsilon}\|^{\frac{7}{4}}_{L^2(\Omega)}+\|\Delta w_{\varepsilon}\|_{L^{2}(\Omega)})}\\
\leq&\disp{\frac{1}{4}\|\Delta w_{\varepsilon}\|^{2}_{L^2(\Omega)}+C_{6,*}e^{-\frac{\rho_{1,*}}{2} t}~~\mbox{for all}~~ t>\tilde{t}_{*},}\\
\end{array}
\label{dd1ssddff1cfsdfggghhvggcz2.5ghju48156}
\end{equation}
where we have used the fact that $t>\tilde{t}_{*}\geq T_{1,*}>2.$
Inserting \dref{dd1ssddff1cfsdfggghhvggcz2.5ghju48156} into \dref{cz2.5ghju4sdssssddffgf8156} and using \dref{11111hhxxcdfvhhhvssssssghhhhsssddfffcccjjghjjsdgggddddgdddffddfddddffddffssddcssdz2.5}, we have
$$
\begin{array}{rl}
\disp{\frac{1}{{2}}\frac{d}{dt} \|\nabla w_{\varepsilon}(\cdot,t)\|^{{{2}}}_{L^{{2}}(\Omega)}}\leq &\disp{-\frac{1}{2}\int_{\Omega}  |\Delta w_{\varepsilon}|^2-\int_{\Omega} |\nabla w_{\varepsilon}|^{2}+
C_{6,*}e^{-\frac{\rho_{1,*}}{2} t}+
\int_\Omega (n_{\varepsilon}-\bar{n}_0)^2}\\
\leq &\disp{-\frac{1}{2}\int_{\Omega}  |\Delta w_{\varepsilon}|^2-\int_{\Omega} |\nabla w_{\varepsilon}|^{2}+
C_{6,*}e^{-\frac{\rho_{1,*}}{2} t}+
C_{1,*}e^{-\rho_{1,*} t}}\\
\leq &\disp{-\frac{1}{2}\int_{\Omega}  |\Delta w_{\varepsilon}|^2-\int_{\Omega} |\nabla w_{\varepsilon}|^{2}+
\max\{C_{6,*},C_{1,*}\}e^{-\frac{\rho_{1,*}}{2} t}~~\mbox{for all}~~ t>\tilde{t}_{*}.}\\
\end{array}
$$
Integrating the above inequality 
 in time and using \dref{223311111hhxxcdfvhhhvsssssssssddfffcccjjgssddhjjsdggdffgggddddddgdddffddfddddffddffssddcssdz2.5}, we derive that there exist positive constants $\rho_{2,*}$ and $C_{7,*}$ such that
\begin{equation}\int_\Omega|\nabla w_{\varepsilon}(x,t)|^2dx\leq C_{7,*}e^{-\rho_{2,*} t}~~~\mbox{for all}~~t\geq \tilde{t}_{*}.
\label{11111hhxxcdfddffgvhhhvsssssssssddfffcccjjghjjsdgggdssdddssddgdddffddfddddffddffssddcssdz2.5}
\end{equation}
However, from  \dref{11111hhxxcdfddffgvhhhvsssssssssddfffcccjjghjjsdgggdssdddssddgdddffddfddddffddffssddcssdz2.5} and \dref{11111hhxxcdfvhhhvffgggsssssssssddfffcccjjghjjsdgggdssdddgdddffddfddddffddffssddcssdz2.5}, with the help of the Sobolev imbedding theorem, it follows that there exist positive constants $C_{8,*}$ and $\rho_{3,*}$ such that
\begin{equation}
\begin{array}{rl}
\|w_{\varepsilon}(\cdot, t)\|_{L^{5}(\Omega)}\leq  C_{8,*}e^{-\rho_{3,*} t}~ \mbox{for all}~ t\geq \tilde{t}_{*}.\\
\end{array}
\label{cz2.5715jkkjjkkkkcvccvvhj4456777jjkddfffffkhhgll2233}
\end{equation}
Now, 
applying
the variation-of-constants formula
for $w_{\varepsilon}$ and applying $\nabla\cdot u_{\varepsilon}=0$ in $x\in \Omega, t>0$, we obtain that for any $t> 2\tilde{t}_{*},$
\begin{equation}
w_{\varepsilon}(\cdot,t)=e^{(t-\tilde{t}_{*})(\Delta-1)}w_\varepsilon(\cdot,\tilde{t}_{*}) +\int_{\tilde{t}_{*}}^{t}e^{(t-s)(\Delta-1)}(n_{\varepsilon}(\cdot,s)-\bar{n}_0+\nabla \cdot(u_{\varepsilon}(\cdot,s) w_{\varepsilon}(\cdot,s)) ds,
\label{5555hhjjjfghbnmcz2.5ghju48cfg924ghyuji}
\end{equation}
so that, we have
\begin{equation}
\begin{array}{rl}
&\disp{\|\nabla w_{\varepsilon}(\cdot, t)\|_{L^{4}(\Omega)}}\\
\leq&\disp{\|\nabla e^{(t-\tilde{t}_{*})(\Delta-1)} w_{\varepsilon}(\cdot,\tilde{t}_{*})\|_{L^{4}(\Omega)}+
\int_{\tilde{t}_{*}}^t\|\nabla e^{(t-s)(\Delta-1)}(n_{\varepsilon}(\cdot,s)-\bar{n}_0)\|_{L^4(\Omega)}ds}\\
&\disp{+\int_{\tilde{t}_{*}}^t\|\nabla e^{(t-s)(\Delta-1)}\nabla \cdot(u_{\varepsilon}(\cdot,s)
w_{\varepsilon}(\cdot,s))\|_{L^4(\Omega)}ds~~\mbox{for all}~ ~~ t>2\tilde{t}_{*}.}\\
\end{array}
\label{222244444zjccfgghfgjcvbscz2.5297x96301ku}
\end{equation}
To address the right-hand side of \dref{222244444zjccfgghfgjcvbscz2.5297x96301ku}, in view of \dref{cz2.5715jkkjjkkkkcvccvvhj4456777jjkddfffffkhhgll2233}, we first use Lemma \ref{llssdrffmmggnnccvvccvvkkkkgghhkkllvvlemma45630}    to get for some positive constants $\mu_{1,*}>0,\rho_{4,*}>0$ and $C_{9,*}>0$ such that
\begin{equation}
\begin{array}{rl}
\|\nabla e^{(t-\tilde{t}_{*})(\Delta-1)} w_\varepsilon(\cdot,\tilde{t}_{*})\|_{L^{4}(\Omega)}\leq &\disp{\kappa_3(t-\tilde{t}_{*})^{-\frac{1}{2}-\frac{3}{2}(\frac{1}{5}-\frac{1}{4})}e^{-\mu_{1,*} t}\|w_\varepsilon(\cdot, t)\|_{L^5(\Omega)}}\\
\leq &\disp{\kappa_3\|w_\varepsilon(\cdot, t)\|_{L^5(\Omega)}}\\
\leq &\disp{C_{9,*}e^{-\rho_{4,*} t}~~\mbox{for all}~ ~~ t> 2\tilde{t}_{*}}\\
\end{array}
\label{zjccffgbhjcghhhjjjvvvbscz2.5297x96301ku}
\end{equation}
by using $\tilde{t}_{*}>1,$ where $\kappa_3$ is the same as Lemma \ref{llssdrffmmggnnccvvccvvkkkkgghhkkllvvlemma45630}.
Since 
$$-\frac{1}{2}-\frac{3}{2}\left(\frac{1}{2}-\frac{1}{4}\right)>-1,$$
 together with this,
 by using Lemma \ref{llssdrffmmggnnccvvccvvkkkkgghhkkllvvlemma45630} and \dref{11111hhxxcdfvhhhvssssssghhhhsssddfffcccjjghjjsdgggddddgdddffddfddddffddffssddcssdz2.5},  the second term on  the right-hand side of  \dref{222244444zjccfgghfgjcvbscz2.5297x96301ku} is estimated
as
\begin{equation}
\begin{array}{rl}
&\disp{\int_{\tilde{t}_{*}}^t\|\nabla e^{(t-s)(\Delta-1)}(n_{\varepsilon}(\cdot,s)-\bar{n}_0)\|_{L^{4}(\Omega)}ds}\\
\leq&\disp{C_{10,*}\int_{t_*}^t[1+(t-s)^{-\frac{1}{2}-\frac{3}{2}(\frac{1}{2}-\frac{1}{4})}] e^{-\lambda_{1,*}(t-s)}\|n_{\varepsilon}(\cdot,s)-\bar{n}_0\|_{L^{2}(\Omega)}ds}\\
\leq&\disp{C_{10,*}\int_{\tilde{t}_{*}}^t[1+(t-s)^{-\frac{1}{2}-\frac{3}{2}(\frac{1}{2}-\frac{1}{4})}] e^{-\lambda_{1,*}(t-s)}C_{1,*}e^{-\rho_{1,*} s}ds}\\
\leq&\disp{C_{11,*}e^{-\rho_{5,*} t}~~ \mbox{for all}~ ~~ t> 2\tilde{t}_{*}}\\
\end{array}
\label{zjccffgbhjcvvvbscz2.5297x96301ku}
\end{equation}
and some positive constants $C_{10,*},\rho_{5,*},\lambda_{1,*}$ and  $C_{11,*}$. Here $C_{1,*}$ and $\rho_{1,*}$ are the same as \dref{11111hhxxcdfvhhhvssssssghhhhsssddfffcccjjghjjsdgggddddgdddffddfddddffddffssddcssdz2.5}.
Next, we will address the third term on the right-hand
side of \dref{222244444zjccfgghfgjcvbscz2.5297x96301ku}. To this end,  we choose $\iota =\frac{23}{48}$ and $\tilde{\kappa}=\frac{1}{96}$, then
 $\frac{1}{2} + \iota+\tilde{\kappa}<1$. Therefore, in view of the H\"{o}lder inequality,
 we derive from Lemma \ref{llssdrffmmggnnccvvccvvkkkkgghhkkllvvlemma45630}, \dref{11111hhxxcdfvhssddffhhvsssssssssddfffcccjjghjjsdgggddddddgdddffddfddddffddffssddcssdz2.5} and \dref{cz2.5715jkkjjkkkkcvccvvhj4456777jjkddfffffkhhgll2233}  that there exist positive constants $\lambda_{2,*},\rho_{6,*},C_{12,*}$, $C_{13,*}$, $C_{14,*},C_{15,*}$ and $C_{16,*}$ such that
\begin{equation}
\begin{array}{rl}
&\disp{\int_{\tilde{t}_{*}}^t\|\nabla e^{(t-s)(\Delta-1)}\nabla \cdot(u_{\varepsilon}(\cdot,s) w_{\varepsilon}(\cdot,s))\|_{L^{4}(\Omega)}ds}\\
\leq&\disp{C_{12,*}\int_{\tilde{t}_{*}}^t\|e^{(t-s)(\Delta-1)}\nabla \cdot(u_{\varepsilon}(\cdot,s) w_{\varepsilon}(\cdot,s))\|_{W^{1,4}(\Omega)}ds}\\
\leq&\disp{C_{13,*}\int_{\tilde{t}_{*}}^t\|(-\Delta+1)^\iota e^{(t-s)(\Delta-1)}\nabla \cdot(u_\varepsilon(\cdot,s) w_{\varepsilon}(\cdot,s))\|_{L^{\frac{9}{2}}(\Omega)}ds}\\
\leq&\disp{C_{14,*}\int_{\tilde{t}_{*}}^t(t-s)^{-\iota-\frac{1}{2}-\tilde{\kappa}} e^{-\lambda_{2,*}(t-s)}\|u_\varepsilon(\cdot,s) w_{\varepsilon}(\cdot,s)\|_{L^{\frac{9}{2}}(\Omega)}ds}\\
\leq&\disp{C_{15,*}\int_{\tilde{t}_{*}}^t(t-s)^{-\iota-\frac{1}{2}-\tilde{\kappa}} e^{-\lambda_{2,*}(t-s)}\|u_\varepsilon(\cdot,s)\|_{L^{45}(\Omega)}\| w_{\varepsilon}(\cdot,s)\|_{L^{5}(\Omega)}ds}\\
\leq&\disp{C_{16,*}e^{-\rho_{6,*} t}~~ \mbox{for all}~ ~~ t> 2\tilde{t}_{*}.}\\
\end{array}
\label{zjccffgbhjcvddfgghhvvbscz2.5297x96301ku}
\end{equation}
Here we have used the fact that
$$\begin{array}{rl}\disp\int_{\tilde{t}_{*}}^t(t-s)^{-\iota-\frac{1}{2}-\tilde{\kappa}} e^{-\lambda_{2,*}(t-s)}ds\leq&\disp{\int_{0}^{\infty}\sigma^{-\iota-\frac{1}{2}-\tilde{\kappa}} e^{-\lambda_{2,*}\sigma}d\sigma<+\infty.}\\
\end{array}
$$
Finally, collecting  \dref{222244444zjccfgghfgjcvbscz2.5297x96301ku}--\dref{zjccffgbhjcvddfgghhvvbscz2.5297x96301ku},
 we can obtain that there  exists a positive constant $C_{17,*}$ and $\rho_{7,*}$ such that
  \begin{equation}
\int_{\Omega}|\nabla {c_{\varepsilon}}(x,t)|^{4}dx\leq C_{17,*}e^{-\rho_{7,*} t}~~ \mbox{for all}~ ~~ t> 2\tilde{t}_{*}.
\label{hjui909klsdddfopji11ddfff5}
\end{equation}
This completes the proof of Lemma \ref{11x344ccffggfhhlemma45625xxhjhjuioookloghyui}.
\end{proof}

\begin{lemma}\label{x344ddfgghhjjccffggfhhlemma45625xxhjhjuioookloghyui}
Under the assumptions of Theorem \ref{thaaaeorem3}, one can find that $T_{*,5}> 2$ and $C_{*,5}> 0$  with the property that for all $\varepsilon\in (0, 1)$ we have
\begin{equation}
\|n_{\varepsilon}(\cdot,t)\|_{L^{\frac{10}{3}}(\Omega)}\leq C_{*,5}~~\mbox{for all}~~ t\geq T_{*,5}.
\label{ddxxxcvvddcvdhjjjjdffbbggddczv.5ghcfgssddd924ghyuji}
\end{equation}
\end{lemma}
\begin{proof}
Firstly,
let us apply 
Lemma \ref{11x344ccffggfhhlemma45625xxhjhjuioookloghyui} to fix $T_{1,**} > 2$   as well as
constants $\rho_{1,**}\in(0,1)$ and $C_{1,**}>0$ 
such that for any $t\geq T_{1,**} $
 \begin{equation}\|\nabla c_\varepsilon\|_{L^\infty((t,+\infty);L^4(\Omega))}<C_{1,**}e^{-\rho_{1,**} t}.
\label{3222566311111hhxxcdfvhhhvffgggsssssssssddfffcccjjghjjsdgggdssdddgdddffddfddddffddffssddcssdz2.5}
\end{equation}
On the other hand, by {Proposition} \ref{11aaaerrrlemdfghkkmaddffffdfffgg4sssdddd5630}, we also derive that
\begin{equation}
\int_{T_{1,**}}^{\infty}\int_{\Omega}n_{\varepsilon}^{\frac{10}{3}} \leq C_{2,**} ~~~\mbox{for all}~~~~\varepsilon\in(0,1)
\label{ssdd22ddff1.1ddfghssdddssssssddsyddsssffssddduisda}
\end{equation}
and some positive constant $C_{2,**}.$
Then \dref{ssdd22ddff1.1ddfghssdddssssssddsyddsssffssddduisda} in particular implies that 
we can find $t_{**}\in(T_{1,**},\infty)$ such that
 \begin{equation}\|n_{\varepsilon}(\cdot,t_{**})\|_{L^{\frac{10}{3}}(\Omega)}\leq C_{2,**}.
\label{223311111hhxxcdfvhhhvsssssssssddfffcccjjgssddhjjsdggdffgggddddddgdddffddfddddffddffssddcssdz2.5}
\end{equation}
Let $p=\frac{10}{3}$.
Testing the first  equation in $\dref{1.1fghyuisda}$ by ${n^{p-1}_{\varepsilon}}$, using the fact $\nabla\cdot u_{\varepsilon}=0$,
and applying the Young  inequality and the H\"{o}lder inequality, we have
\begin{equation}
\begin{array}{rl}
&\disp{\frac{1}{p}\frac{d}{dt}\int_{\Omega}n^{{{p}}}_{\varepsilon}+({{p}-1})
\int_{\Omega}n^{{{p}-2}}_{\varepsilon}|\nabla c_{\varepsilon}|^2}
\\
=&\disp{-\int_\Omega n^{p-1}_{\varepsilon}\nabla\cdot(n_{\varepsilon}F_{\varepsilon}(n_{\varepsilon})S_\varepsilon(x, n_{\varepsilon}, c_{\varepsilon})\nabla c_{\varepsilon})}
\\
=&\disp{(p-1)\int_\Omega n^{p-2}_{\varepsilon}F_{\varepsilon}(n_{\varepsilon})S_\varepsilon(x, n_{\varepsilon}, c_{\varepsilon})\nabla c_{\varepsilon}\cdot \nabla n_{\varepsilon}}
\\
\leq&\disp{\frac{{p}-1}{2}
\int_{\Omega}n^{{{p}-2}}_{\varepsilon}|\nabla n_{\varepsilon}|^2+\frac{{p}-1}{2}
\int_{\Omega}n^{{{p}}}_{\varepsilon}(1+n_{\varepsilon})^{-2\alpha}|\nabla c_{\varepsilon}|^2}\\
\leq&\disp{\frac{{p}-1}{2}
\int_{\Omega}n^{{{p}-2}}_{\varepsilon}|\nabla n_{\varepsilon}|^2+\frac{{p}-1}{2}
\left(\int_{\Omega}n^{{{2p}}}_{\varepsilon}(1+n_{\varepsilon})^{-4\alpha}\right)^{\frac{1}{2}}\left(\int_{\Omega}|\nabla c_{\varepsilon}|^4\right)^{\frac{1}{2}}~~\mbox{for all}~~t>t_{**}}\\
\end{array}
\label{3333cz2.5114114}
\end{equation}
by using \dref{x1.73142vghf48gg} and \dref{1.ffggvbbnxxccvvn1}. Without loss of generality, we may assume that $\alpha<\frac{17}{12}$, since $\alpha\geq\frac{17}{12}$ can be proved similarly and easily.
Then the Gagliardo--Nirenberg inequality as well as  \dref{3222566311111hhxxcdfvhhhvffgggsssssssssddfffcccjjghjjsdgggdssdddgdddffddfddddffddffssddcssdz2.5} and \dref{ddfgczhhhh2.5ghju48cfg924ghyuji} enables us to see that
$$
\begin{array}{rl}
&\disp\frac{{p}-1}{2}
\left(\int_{\Omega}n^{{{2p}}}_{\varepsilon}(1+n_{\varepsilon})^{-4\alpha}\right)^{\frac{1}{2}}\left(\int_{\Omega}|\nabla c_{\varepsilon}|^4\right)^{\frac{1}{2}}
\\
\leq&\disp\frac{{p}-1}{2}
\left(\int_{\Omega}n^{{{2p}-4\alpha}}_{\varepsilon}\right)^{\frac{1}{2}}C_{1,**}^2e^{-2\rho_{1,**} t}
\\
\leq&\disp{C_{1,**}^2\frac{{p}-1}{2}\|  n^{\frac{p}{2}}_{\varepsilon}\|^{\frac{2p-4\alpha}{p}}_{L^{\frac{4p-8\alpha}{p}}(\Omega)}}
\\
\leq&\disp{C_{3,**}(\|\nabla n^{\frac{p}{2}}_{\varepsilon}\|_{L^2(\Omega)}^{\frac{6p-12\alpha-3}{3p-1}}\| n^{\frac{p}{2}}_{\varepsilon}\|_{L^{\frac{2}{p}}(\Omega)}^{\frac{2(p-1)}{p}-\frac{6p-12\alpha-3}{3p-1}}
+\|n^{\frac{p}{2}}_{\varepsilon}\|_{L^\frac{2}{p}(\Omega)}^{\frac{2(p-1)}{p}})}\\
\leq&\disp{C_{4,**}(\|\nabla n^{\frac{p}{2}}_{\varepsilon}\|_{L^2(\Omega)}^{\frac{6p-12\alpha-3}{3p-1}}+1)}~~~~\mbox{for all}~~t>t_{**}
\end{array}
$$
and some positive constants  $C_{3,**}$ and $C_{4,**}$, where $C_{1,**}$ and $\rho_{1,**}$ are the same as \dref{3222566311111hhxxcdfvhhhvffgggsssssssssddfffcccjjghjjsdgggdssdddgdddffddfddddffddffssddcssdz2.5}
Here we use the Young inequality to obtain  that there is $C_{5,**}>0$ such that
\begin{equation}
\begin{array}{rl}
\disp{\frac{1}{p}\frac{d}{dt}\int_{\Omega}n^{{{p}}}_{\varepsilon}+\frac{({{p}-1})}{2}\int_{\Omega}n^{{{p}-2}}_{\varepsilon}|\nabla n_{\varepsilon}|^2}
\leq&\disp{C_{5,**}~~\mbox{for all}~~t>t_{**}.}\\
\end{array}
\label{3333cz2.5ssdffggg114114}
\end{equation}
Next, in view of \dref{ddfgczhhhh2.5ghju48cfg924ghyuji}, we derive from the Gagliardo--Nirenberg inequality  that for some positive constants $C_{6,**}$ and $C_{7,**}$ such that
$$
\begin{array}{rl}
&\disp \|n_\varepsilon \|^{{p}}_{L^{{p}}(\Omega)}\\
=& \| n_\varepsilon  ^{\frac{p}{2}}\|_{L^{2}(\Omega)}^{2}\\
\leq& C_{6,**}(\| \nabla n_\varepsilon  ^{\frac{p}{2}}\|_{L^{2}(\Omega)}^{\frac{6(p-1)}{3p-1}} \| n_\varepsilon  ^{\frac{p}{2}}\|_{L^{\frac{2}{p}}(\Omega)}^{2-\frac{6(p-1)}{3p-1}}+\| n_\varepsilon  ^{\frac{p}{2}}\|_{L^{\frac{2}{p}}(\Omega)}^{2})\\
\leq& C_{7,**}(\| \nabla n_\varepsilon  ^{\frac{p}{2}}\|_{L^{2}(\Omega)}^{\frac{6(p-1)}{3p-1}}+1)~~~\mbox{for all}~~t>t_{**},\\
\end{array}
$$
which together with the Young inequality  implies that
\begin{equation}\label{11113333ssddczssddd2.5kke3uuii45677ddff89001214114114rrggjjkk}
\begin{array}{rl}
\disp \|n_\varepsilon \|^{{p}}_{L^{{p}}(\Omega)}
\leq& \disp{\| \nabla n_\varepsilon  ^{\frac{p}{2}}\|_{L^{2}(\Omega)}^{2}+C_{8,**}}\\
=& \disp{\frac{p^2}{4}\int_{\Omega} n_\varepsilon  ^{{{{p}-2}}}|\nabla n_\varepsilon|^2+C_{8,**}}~~~~\mbox{for all}~~t>t_{**}.\\
\end{array}
\end{equation}
with some positive constant $C_{8,**}$.
Inserting \dref{11113333ssddczssddd2.5kke3uuii45677ddff89001214114114rrggjjkk} into \dref{3333cz2.5ssdffggg114114}, there exist positive constants $C_{9,**}$ and $C_{10,**}$ such that
\begin{equation}
\begin{array}{rl}
\disp{\frac{d}{dt}\int_{\Omega}n^{{{p}}}_{\varepsilon}+C_{9,**}\int_{\Omega}n^{{{p}}}_{\varepsilon}}
\leq&\disp{C_{10,**}~~\mbox{for all}~~t>t_{**}}\\
\end{array}
\label{3333cz2.5ssdffggg11sddd4114}
\end{equation}
by some basic calculation. Finally,  employing the Grownwall lemma to \dref{3333cz2.5ssdffggg11sddd4114}, we can obtain \dref{ddxxxcvvddcvdhjjjjdffbbggddczv.5ghcfgssddd924ghyuji}
 by using \dref{223311111hhxxcdfvhhhvsssssssssddfffcccjjgssddhjjsdggdffgggddddddgdddffddfddddffddffssddcssdz2.5}.
\end{proof}
\begin{lemma}\label{xccffgghhlemma4563025xxhjhjuioookloghyui}
Under the assumptions of Theorem \ref{thaaaeorem3}, one can find that $T_{*,6}>2$ and $C_{*,6}> 0$
   such that for all $t\geq T_{*,6}$ %
   and $\varepsilon\in(0,1)$,
\begin{equation}
\|c_{\varepsilon}(\cdot, s)\|_{W^{1,\infty}(\Omega)}\leq C_{*,6}.
\label{ddxxxcvvddcvdhjjjdddjdffbbggddczv.5ghcfg9ssddff24ghyuji}
\end{equation}
\end{lemma}
\begin{proof}
Firstly, 
applying {Lemma} \ref{11aaalemdfghkkmaddffffdfffgg4sssdddd5630} as well as Lemmas \ref{aaalemmaddffffsddddfffgg4sssdddd5630}--\ref{x344ddfgghhjjccffggfhhlemma45625xxhjhjuioookloghyui},  one can derive that for some positive constants $T_{1,***}>2$ and $C_{1,***}$ such that
\begin{equation}
\begin{array}{rl}
\|c_{\varepsilon}(\cdot, s)\|_{W^{1,4}(\Omega)}+\|c_{\varepsilon}(\cdot, s)\|_{L^{\infty}(\Omega)}+\|u_{\varepsilon}(\cdot, s)\|_{L^{q}(\Omega)}+\|n_{\varepsilon}(\cdot, s)\|_{L^{\frac{10}{3}}(\Omega)}\leq  C_{1,***}~~ \mbox{for all}~ s\geq T_{1,***}\\
\end{array}
\label{cz2.5715jkkjjkkkkcvsdffccvvhj4456777jjkddfffffkhhgll}
\end{equation}
and $q>2.$ Here we have used the Sobolev embedding $W^{1,4}(\Omega)\hookrightarrow L^{\infty}(\Omega)$.
To prove the boundedness of $\|\nabla c_{\varepsilon}(\cdot, t)\|_{L^\infty(\Omega)}$,
we rewrite the variation-of-constants formula for $c_{\varepsilon}$ in the form
$$c_{\varepsilon}(\cdot, t) = e^{(t-T_{1,***})(\Delta-1)}c_\varepsilon(\cdot,T_{1,***}) +\int_{T_{1,***}}^{t}e^{(t-s)(\Delta-1)}[n_{\varepsilon}(\cdot, s)-u_{\varepsilon}(\cdot, s)\cdot\nabla  c_{\varepsilon}(\cdot, s)]ds~ \mbox{for all}~ t>2T_{1,***}.$$
Now, we choose $\theta\in(\frac{1}{2}+\frac{3}{2}\times\frac{3}{10},1),$ 
 then the domain of the fractional power $D((-\Delta + 1)^\theta)\hookrightarrow W^{1,\infty}(\Omega)$ (see \cite{Horstmann791}). Hence, in view of $L^p$--$L^q$ estimates associated with the heat semigroup,  \dref{cz2.5715jkkjjkkkkcvsdffccvvhj4456777jjkddfffffkhhgll}, we derive  that there exist positive constants $\lambda_{1,***}$, $C_{2,***}$, $C_{3,***}$, $C_{4,***}$, and $C_{5,***}$ such that
\begin{equation}
\begin{array}{rl}
&\| c_{\varepsilon}(\cdot, t)\|_{W^{1,\infty}(\Omega)}\\
\leq&\disp{C_{2,***}\|(-\Delta+1)^\theta c_{\varepsilon}(\cdot, t)\|_{L^{\frac{10}{3}}(\Omega)}}\\
\leq&\disp{C_{3,***}(t-T_{1,***})^{-\theta}e^{-\lambda_{1,***} (t-T_{1,***})}\|c_\varepsilon(\cdot,T_{1,***})\|_{L^{\frac{10}{3}}(\Omega)}}\\
&\disp{+C_{3,***}\int_{T_{1,***}}^t(t-s)^{-\theta}e^{-\lambda_{1,***}(t-s)}
\|n_{\varepsilon}(\cdot,s)-u_{\varepsilon}(\cdot,s) \cdot \nabla c_{\varepsilon}(\cdot,s)\|_{L^{\frac{10}{3}}(\Omega)}ds}\\
\leq&\disp{C_{4,***}+C_{4,***}\int_{T_{1,***}}^t(t-s)^{-\theta}e^{-\lambda_{1,***}(t-s)}[\|n_{\varepsilon}(\cdot,s)\|_{L^{\frac{10}{3}}(\Omega)}+
\|u_{\varepsilon}(\cdot,s)\|_{L^{20}(\Omega)}
\|\nabla c_{\varepsilon}(\cdot,s)\|_{L^4(\Omega)}]ds}\\
\leq&\disp{C_{5,***}~ \mbox{for all}~ t>2T_{1,***}.}\\
\end{array}
\label{zjccffgbhjcvvvbscz2.5297x96301ku}
\end{equation}
Here we have used H\"{o}lder's inequality as well as
$$\int_{T_{1,***}}^t(t-s)^{-\theta}e^{-\lambda_{1,***}(t-s)}\leq \int_{0}^{\infty}\sigma^{-\theta}e^{-\lambda_{1,***}\sigma}d\sigma<+\infty$$
and
$t-T_{1,***}>T_{1,***}>1.$
Finally, collecting \dref{cz2.5715jkkjjkkkkcvsdffccvvhj4456777jjkddfffffkhhgll} and \dref{zjccffgbhjcvvvbscz2.5297x96301ku} imply
\dref{ddxxxcvvddcvdhjjjdddjdffbbggddczv.5ghcfg9ssddff24ghyuji} and
thereby completes the proof of Lemma \ref{xccffgghhlemma4563025xxhjhjuioookloghyui}.
\end{proof}

\begin{lemma}\label{x344ccffggfhhlemma45625xxhjhjuioookloghyui}
Under the assumptions of Theorem \ref{thaaaeorem3}, one can find that $T_{*,7}> 2$ and $C_{*,7}> 0$  with the property that for all $\varepsilon\in (0, 1)$ we have
\begin{equation}
\|n_{\varepsilon}(\cdot,t)\|_{L^{\infty}(\Omega)}\leq C_{*,7}~~\mbox{for all}~~ t\geq T_{*,7}.
\label{ddxxxcvvddcvdhjjjjdffbbggddczv.5ghcfgssddd924ghyuji}
\end{equation}
\end{lemma}
\begin{proof}
In view of
 Lemma  \ref{11aaalemdfghkkmaddffffdfffgg4sssdddd5630},
Lemma \ref{11x344ccffggfhhlemma45625xxhjhjuioookloghyui} and Lemma \ref{xccffgghhlemma4563025xxhjhjuioookloghyui}, we can pick  $T_{1,****} > 2$ as well as
 $\rho_{1,****}\in(0,1)$ and $C_{1,****}>0$ 
such that for any $t\geq T_{1,****} $
 \begin{equation}\|u_{\varepsilon}\|_{L^\infty((t,+\infty);L^p(\Omega))}< C_{1,****}~~~\mbox{for all}~~p>1
\label{3311111hhxxcdfvhssddffhhvsssssssssddfffcccjjghjjsdgggddddddgdddffddfddddffddffssddcssdz2.5}
\end{equation}
as well as
 \begin{equation}\|n_{\varepsilon}(\cdot,t)-\bar{n}_0\|_{L^\infty((t,+\infty);L^2(\Omega))}<C_{1,****}
\label{11111hhxxcdfvhhhvssssssghhhhsssddfffcccjjghjjsdgggddddgdddffddfddddffddffssddcssdz2.5}
\end{equation}
and
 \begin{equation}\|\nabla c_\varepsilon\|_{L^\infty((t,+\infty);L^\infty(\Omega))}<C_{1,****}.
\label{3311111hhxxcdfvhhhvffgggsssssssssddfffcccjjghjjsdgggdssdddgdddffddfddddffddffssddcssdz2.5}
\end{equation}
Next, 
let 
$\tilde{h}_{\varepsilon} :=F_{\varepsilon}(n_{\varepsilon})S_{\varepsilon} (x, n_{\varepsilon} , c_{\varepsilon} )\nabla c_{\varepsilon} +u_{\varepsilon} $. In view of 
\dref{x1.73142vghf48gg}, \dref{1.ffggvbbnxxccvvn1} as well as
\dref{3311111hhxxcdfvhssddffhhvsssssssssddfffcccjjghjjsdgggddddddgdddffddfddddffddffssddcssdz2.5} and \dref{3311111hhxxcdfvhhhvffgggsssssssssddfffcccjjghjjsdgggdssdddgdddffddfddddffddffssddcssdz2.5},  there exists $C_{2,****} > 0$ such that
\begin{equation}
\begin{array}{rl}
\|\tilde{h}_{\varepsilon} (\cdot, t)\|_{L^{\frac{190}{3}}(\Omega)}\leq&\disp{C_{2,****}~~ \mbox{for all}~~ t\geq T_{1,****}.}\\
\end{array}
\label{cz2ddff.57151ccvhhjjjkkkuuifghhhivhccvvhjjjkkhhggjjllll}
\end{equation}
Hence, due to the fact that $\nabla\cdot u_{\varepsilon} =0$,  again,  by means of an
associate variation-of-constants formula for $n_{\varepsilon} $, we can derive
\begin{equation}
n_{\varepsilon} (\cdot,t)=e^{(t-T_{1,****})\Delta}n_{\varepsilon} (\cdot,T_{1,****})-\int_{T_{1,****}}^{t}e^{(t-s)\Delta}\nabla\cdot(n_{\varepsilon} (\cdot,s)\tilde{h}_{\varepsilon} (\cdot,s)) ds,~~ t>2T_{1,****}.
\label{5555fghbnmcz2.5ghjjjkkklu48cfg924ghyuji}
\end{equation}
Since, $T_{1,****}>1$, then in light of the  $L^p$-$L^q$ estimates for the Neumann heat semigroup and {Lemma} \ref{11aaalemdfghkkmaddffffdfffgg4sssdddd5630}, we conclude that for some positive constants $C_{3,****}$ as well as $C_{4,****}$ and $\tilde{\rho}_{2,****}$ such that for  all $t>2T_{1,****}$
\begin{equation}
\begin{array}{rl}
\|e^{(t-T_{1,****})\Delta}n_\varepsilon (\cdot,T_{1,****})\|_{L^{\infty}(\Omega)}\leq &\disp{C_{3,****}(t-T_{1,****})^{-\frac{3}{4}}e^{-\tilde{\rho}_{2,****} t}\|n_\varepsilon (\cdot,T_{1,****})\|_{L^{2}(\Omega)}\leq C_{4,****}.}\\
\end{array}
\label{zjccffgbhjffghhjcghghjkjjhhjjjvvvbscz2.5297x96301ku}
\end{equation}
In the follwing, we will estimate
the last term on the right-hand side of \dref{5555fghbnmcz2.5ghjjjkkklu48cfg924ghyuji}. In fact,
applying the smoothing
properties of the
Stokes semigroup as well as \dref{3.10gghhjuuloollgghhhyhh} and the H\"{o}lder inequality to find $C_{5,****} > 0$, $C_{6,****} > 0$  and $\tilde{\lambda}_{1,****}$   such that
\begin{equation}
\begin{array}{rl}
&\disp\int_{T_{1,****}}^t\| e^{(t-s)\Delta}\nabla\cdot(n_\varepsilon (\cdot,s)\tilde{h}_\varepsilon (\cdot,s)\|_{L^\infty(\Omega)}ds\\
\leq&\disp C_{5,****}\int_{T_{1,****}}^t(t-s)^{-\frac{1}{2}-\frac{9}{19}}e^{-\tilde{\lambda}_{1,****}(t-s)}\|n_\varepsilon (\cdot,s)\tilde{h}_\varepsilon  (\cdot,s)\|_{L^{\frac{19}{6}}(\Omega)}ds\\
\leq&\disp C_{5,****}\int_{T_{1,****}}^t(t-s)^{-\frac{1}{2}-\frac{9}{19}}e^{-\tilde{\lambda}_{1,****}(t-s)}\| n_\varepsilon (\cdot,s)\|_{L^{\frac{10}{3}}(\Omega)}\|\tilde{h}_\varepsilon (\cdot,s)\|_{L^{\frac{190}{3}}(\Omega)}ds\\
\leq&\disp C_{6,****}~~\mbox{for all}~~ t>2T_{1,****},\\
\end{array}
\label{ccvbccvvbbnnndffghhjjvcvvbccfbbnfgbghjjccmmllffvvggcvvvvbbjjkkdffzjscz2.5297x9630xxy}
\end{equation}
where we have used the fact that
$$\int_{T_{1,****}}^t(t-s)^{-\frac{1}{2}-\frac{9}{19}}e^{-\tilde{\lambda}_{1,****}(t-s)}ds\leq \int_{1}^\infty\sigma^{-\frac{1}{2}-\frac{9}{19}}e^{-\tilde{\lambda}_{1,****}\sigma}d\sigma<+\infty.$$
In combination with \dref{5555fghbnmcz2.5ghjjjkkklu48cfg924ghyuji}--\dref{ccvbccvvbbnnndffghhjjvcvvbccfbbnfgbghjjccmmllffvvggcvvvvbbjjkkdffzjscz2.5297x9630xxy}, we may finally derive  
\begin{equation}
\begin{array}{rl}
\|n_\varepsilon (\cdot, t)\|_{L^{\infty}(\Omega)}\leq&\disp{C_{7,****}~ ~~ \mbox{for all}~~ t\geq 2T_{1,***}}\\
\end{array}
\label{cz2.57ghhhh151ccvhhjjjkkkffgghhuuiivhccvvhjjjkkhhggjjllll}
\end{equation}
by   some basic calculation.
This completes the proof of Lemma \ref{x344ccffggfhhlemma45625xxhjhjuioookloghyui}.
\end{proof}
Along the idea of the proof of Section  7.3 in \cite{Winklerpejoevsssdd793}, we could 
derive the H\"{o}lder regularity
 of the components of a solution on
intervals of the form $(T_0 ,T_0 + 1)$ for any large $T_0> 2$. For this purpose,
 we firstly introduce the following cut-off functions,  which will play a key role in deriving higher order regularity for solution of problem \dref{1.1fghyuisda}.
\begin{definition}\label{aaalemmaddffffdssfffgg4sssdddd5630}
Let $\xi: \mathbb{R} \rightarrow [0,1]$ be a smooth, monotone function, satisfying $\xi\equiv 0$ on $(-\infty,0]$ and
$\xi\equiv 0$ on $(1,\infty)$ and for any $t_0\in \mathbb{R}$ we let $\xi_{t_0}:= \xi (t -t_0 )$.
\end{definition}

Due to the  above cut-off function,
it follows from maximal Sobolev regularity that the solution $(n_\varepsilon, c_\varepsilon, u_\varepsilon)$ even satisfies estimates in
appropriate H\"{o}lder spaces:

\begin{lemma}\label{lemmddrgga45630hhuussddjjuuyy}
 Let $(n_\varepsilon, c_\varepsilon, u_\varepsilon)$ be a solution of \dref{1.1fghyuisda} as well as  \dref{x1.73142vghf48}  and \dref{x1.73142vgssdddhfjjk48} hold. Then for all
$p \geq 2$, there exist $T_{*,8} > 0$ and $C_{*,8} > 0$ such that for all $\varepsilon\in (0, 1)$,
\begin{equation}
\|u_\varepsilon\|_{L^{p}((t,t+1);W^{2,p}(\Omega))}
+\|u_{\varepsilon t}\|_{L^{p}(\Omega\times(t,t+1))}\leq C_{*,8} ~~\mbox{for all}~~ t\geq T_{*,8}.
\label{zjscz2.5297x9dddjkkkkkd630111kkhhffssddrddroojj}
\end{equation}
\end{lemma}
\begin{proof}
Firstly, for any $p\geq2,$
we invoke
Lemmas \ref{aaalemmaddffffsddddfffgg4sssdddd5630} and \ref{x344ccffggfhhlemma45625xxhjhjuioookloghyui} to fix $\tilde{T}_{1,*}$ as well as  $\tilde{C}_{1,*} > 0$ and $\tilde{C}_{2,*}> 0$ such that with
 $q :=\max\{6p,\frac{6p}{2p-3} \}$ we have
 \begin{equation}\|n_{\varepsilon}\|_{L^\infty((t-1;t+1);L^{\frac{3p}{2}}(\Omega))}\leq \tilde{C}_{1,*}~~~\mbox{for all}~~ t > \tilde{T}_{1,*}
\label{11111hhxxcdfvhhhvsssssdffssssssddfffcccjjghjjsdgggddddgdddffddfddddffddffssddcssdz2.5}
\end{equation}
and
 \begin{equation}\|u_{\varepsilon}\|_{L^\infty((t-1;t+1);L^{q}(\Omega))}\leq \tilde{C}_{2,*}~~~\mbox{for all}~~ t > \tilde{T}_{1,*}.
\label{11111hhxxcdfvhhhvssddsssssssssddfffcccjjghjjsdgggddddddgdddffddfddddffddffssddcssdz2.5}
\end{equation}
For given $t_0 > \tilde{T}_{1,*},$ we let $\xi_{t_0}$ be as defined in {Definition} \ref{aaalemmaddffffdssfffgg4sssdddd5630}.
Then the function $v_\varepsilon : \Omega\times(t_0-1,\infty) \rightarrow \mathbb{R}^3$ defined by $v_\varepsilon(x,t) := \xi_{t_0} (t)u_\varepsilon(x,t), (x,t)\in
 \Omega\times (t_0-1,\infty)$, is a
weak solution in  $L^\infty_{loc}([t_0-1,\infty);L^2_\sigma(\Omega)) \cap L^2_{loc}([t_0-1,\infty);W^{1,2}_{0,\sigma}(\Omega) \cap W^{2,2} (\Omega))$ of
%
%
 \begin{equation}
\begin{array}{ll}
   v_{\varepsilon t}+A v_\varepsilon=h_\varepsilon~~~\mbox{in}~~\Omega\times (t_0-1,\infty),\\
 \end{array}
 \label{11111hhxxghhhjjcdfvhhhvssddsssssssssddfffcccjjghjjsdgggddddddgdddffddfddddffddffssddcssdz2.5}
\end{equation}
 where $$ v_\varepsilon(\cdot,t_0-1)\equiv0~~~\mbox{and}~~~h_\varepsilon=\xi_{t_0} (t)\mathcal{P}[-\kappa(Y_\varepsilon u_\varepsilon \cdot \nabla)u_\varepsilon+n_\varepsilon\nabla \phi]+\xi'_{t_0} u_\varepsilon.$$
  To estimate the inhomogeneity $h_\varepsilon$ herein,  we first note that  the known maximal Sobolev regularity estimate for the Stokes semigroup (\cite{Gigass12176}) yields that there exist positive
constants $\tilde{k}_{1,*} > 0, \tilde{k}_{2,*}>0$ and $\tilde{k}_{3,*} > 0$  such that
\begin{equation}
\begin{array}{rl}
&\disp \int_{t_0-1}^{t_0+1}\|\xi_{t_0}(t)\mathcal{P}[n_\varepsilon(\cdot,t)\nabla\phi]\|_{L^{\frac{3p}{2}}(\Omega)}^{\frac{3p}{2}}dt\\
\leq&\disp \tilde{k}_{1,*}\int_{t_0-1}^{t_0+1}\|n_\varepsilon(\cdot,t)\nabla\phi\|_{L^{\frac{3p}{2}}(\Omega)}^{\frac{3p}{2}}\\
\leq&\disp \tilde{k}_{1,*}\|\nabla\phi\|_{L^\infty(\Omega)}\int_{t_0-1}^{t_0+1}\|n_\varepsilon(\cdot,t)\|_{L^{\frac{3p}{2}}(\Omega)}^{\frac{3p}{2}}dt\\
\leq&\disp \tilde{k}_{2,*}\\
\end{array}
\label{cz2.571hhhhh51lllllccvvhsdddddfccvvhjjjkkhhggjjlsdddlll}
\end{equation}
and
\begin{equation}
\begin{array}{rl}
\disp \int_{t_0-1}^{t_0+1}\|\xi'_{t_0} u_\varepsilon(\cdot,t)\|_{L^{\frac{3p}{2}}(\Omega)}^{\frac{3p}{2}}dt
\leq\disp \tilde{k}_{3,*}.\\
\end{array}
\label{cz2.571hhhhh51lllllccvvhsssdddddddfccvvhjjjkkhhggjjlsdddlll}
\end{equation}
From the boundedness of the Helmholtz projection in $L^p$-spaces and the H\"{o}lder inequality we derive from Lemma \ref{aaalemmaddffffsddddfffgg4sssdddd5630} that there exist positive constant  $\tilde{k}_{i,*}, i=4,\ldots,8$  such that
 for any $ t_0 > \tilde{T}_{1,*}$
\begin{equation}
\begin{array}{rl}
&\disp \int_{t_0-1}^{t_0+1}\|\xi_{t_0}\mathcal{P}[Y_\varepsilon u_\varepsilon\cdot\nabla u_\varepsilon](\cdot,t)\|_{L^{\frac{3p}{2}}(\Omega)}^{\frac{3p}{2}}dt\\
\leq&\disp \tilde{k}_{4,*}\int_{t_0-1}^{t_0+1}\|\nabla u_\varepsilon(\cdot,t)\|_{L^{2p}(\Omega)}^{\frac{3p}{2}}\| u_\varepsilon(\cdot,t)\|_{L^{6p}(\Omega)}^{\frac{3p}{2}}dt\\
\leq&\disp \tilde{k}_{5,*}\int_{t_0-1}^{t_0+1}\|\nabla u_\varepsilon(\cdot,t)\|_{L^{2p}(\Omega)}^{\frac{3p}{2}}dt\\
\leq&\disp \tilde{k}_{6,*}\int_{t_0-1}^{t_0+1}\| u_\varepsilon(\cdot,t)\|_{W^{2,p}(\Omega)}^{p}\|u_\varepsilon(\cdot,t)\|_{L^{\frac{6p}{2p-3}}(\Omega)}^{\frac{p}{2}}dt\\
\leq&\disp \tilde{k}_{7,*}\int_{t_0-1}^{t_0+1}\| u_\varepsilon(\cdot,t)\|_{W^{2,p}(\Omega)}^{p}dt\\
\leq&\disp{\tilde{k}_{8,*}.}\\
\end{array}
\label{cz2.57ssdd1hhhhh51lllllccvvhsddddddfdfccvvhjjjkkhhggjjlsdddlll}
\end{equation}
As a consequence of \dref{cz2.571hhhhh51lllllccvvhsdddddfccvvhjjjkkhhggjjlsdddlll}--\dref{cz2.57ssdd1hhhhh51lllllccvvhsddddddfdfccvvhjjjkkhhggjjlsdddlll}, once more by maximal Sobolev regularity estimates, now applied to \dref{11111hhxxghhhjjcdfvhhhvssddsssssssssddfffcccjjghjjsdgggddddddgdddffddfddddffddffssddcssdz2.5}, we obtain $k_{9,****} > 0$ and $k_{10,****} > 0$ satisfying
\begin{equation}
\begin{array}{rl}
&\disp \int_{t_0}^{t_0+1}\|u_\varepsilon(\cdot,t)\|_{W^{2,\frac{3p}{2}}(\Omega)}^{\frac{3p}{2}}dt+
\int_{t_0}^{t_0+1}\|v_{\varepsilon t}(\cdot,t)\|_{W^{2,\frac{3p}{2}}(\Omega)}^{\frac{3p}{2}}dt\\
\leq&\disp \int_{t_0-1}^{t_0+1}\|u_\varepsilon(\cdot,t)\|_{W^{2,\frac{3p}{2}}(\Omega)}^{\frac{3p}{2}}dt+
\int_{t_0-1}^{t_0+1}\|v_{\varepsilon t}(\cdot,t)\|_{W^{2,\frac{3p}{2}}(\Omega)}^{\frac{3p}{2}}dt\\
\leq&\disp \tilde{k}_{9,*}\int_{t_0-1}^{t_0+1}\|h_\varepsilon(\cdot,t)\|_{L^{\frac{3p}{2}}(\Omega)}^{\frac{3p}{2}}dt\\
\leq&\disp \tilde{k}_{10,*}.\\
\end{array}
\label{cz2.571hhhhh51lllllccvvhsdddddfccvvhssdddjjjkkhhggjjlsdddlll}
\end{equation}
Therefore, by the definition of $\xi$, for any $p> 1$, there exist positive constants $T_{*,8}  > 0$ and $C_{*,8}  > 0$ such that for any
$t > T_{*,8} $ and for all $\varepsilon\in (0, 1)$, 
\begin{equation}
\|u_\varepsilon(\cdot,t)\|_{L^{p}((t,t+1);W^{2,p}(\Omega))} +\|u_{\varepsilon t}(\cdot,t)\|_{L^{p}(\Omega\times(t,t+1))}\leq C_{*,8} .
\label{zjscz2.5297x9dddjkkkkkd630111kkhhffrddroojj}
\end{equation}
This establishes \dref{zjscz2.5297x9dddjkkkkkd630111kkhhffssddrddroojj} and thereby completes the proof.
\end{proof}

\begin{lemma}\label{lemmassdd4563sdff0hhuujjuuyy}
Let $\alpha\geq1$ and \dref{x1.73142vgssdddhfjjk48} hold.
Then one can find $\mu_{*,9}\in(0, 1)$ and $T_{*,9}>2$ and $C_{*,9}>0$ such that for each $\varepsilon\in (0, 1)$ and $t>T_{*,9}$  we have
%
%
\begin{equation}
\|u_\varepsilon(\cdot,t)\|_{C^{1+\mu_{*,9},\frac{\mu_{*,9}}{2}}(\bar{\Omega}\times[t,t+1]);\mathbb{R}^3)}  \leq C_{*,9}.
\label{zjscz2.5297x9630111kkffghhhhiioo}
\end{equation}
\end{lemma}
\begin{proof}
Using sufficiently high values of $p$, in view of Lemma \ref{lemma45eertt630hhuussddjjuuyy}, an application of the embedding result in \cite{AmannAmannmo1216} refines this into the
assertion on H\"{o}lder continuity.
\end{proof}

Applying a similar reasoning (see the proof of Lemma \ref{lemmddrgga45630hhuussddjjuuyy}), concerning $c_\varepsilon$ we obtain bounds of the same kind by using Lemmas \ref{x344ccffggfhhlemma45625xxhjhjuioookloghyui} and \ref{lemmassdd4563sdff0hhuujjuuyy}--\ref{xccffgghhlemma4563025xxhjhjuioookloghyui}.
%
\begin{lemma}\label{lemma45eertt630hhuussddjjuuyy}
 Let $(n_\varepsilon,c_\varepsilon,u_\varepsilon)$ be a solution of \dref{1.1fghyuisda}. If \dref{x1.73142vghf48}  and \dref{x1.73142vgssdddhfjjk48} holds, then for all
$p >1$, there exist $T_{*,10}>2$ and $C_{*,10}>0$ such that for any $\varepsilon\in (0, 1)$ and $t\geq T_{*,10}$ we have
\begin{equation}
\|c_\varepsilon(\cdot,t)\|_{L^{p}((t,t+1);W^{2,p}(\Omega))}
+\|c_{\varepsilon t}(\cdot,t)\|_{L^{p}(\Omega\times(t,t+1))}\leq C_{*,10}.
\label{zjscz2.5297x9dddjkkkkkd630111kksddddhhffssddrddroojj}
\end{equation}
\end{lemma}
\begin{proof}
Firstly, applying Lemmas \ref{x344ccffggfhhlemma45625xxhjhjuioookloghyui}, \ref{xccffgghhlemma4563025xxhjhjuioookloghyui} and \ref{lemmassdd4563sdff0hhuujjuuyy},  there exist $\tilde{T}_{1,**} > 2$
 and positive constants  $\tilde{C}_{1,**}$ as well as $\tilde{C}_{2,**}$ and $\tilde{C}_{3,**}$ such that
\begin{equation}
\|n_\varepsilon\|_{L^{2p}(\Omega\times(t_0-1,t_0+1))}
\leq \tilde{C}_{1,**} ~~\mbox{for all}~~ t_0\geq \tilde{T}_{1,**}
\label{zjscz2.5297x9dddjkkkkkd630111kksddddhddddhffssddrddroojj}
\end{equation}
as well as
\begin{equation}
\|u_{\varepsilon }\|_{L^{\infty}((t_0-1,t_0+1);L^{\infty}(\Omega))}\leq \tilde{C}_{2,**}~~\mbox{for all}~~ t_0\geq \tilde{T}_{1,**}
\label{zjscz2.5297x9dddjkkkkkd630111kksddddhdddhffssddrddroojj}
\end{equation}
and
\begin{equation}
\|\nabla c_\varepsilon\|_{W^{1,2p}(\Omega\times(t_0-1,t_0+1))}
\leq \tilde{C}_{3,**} ~~\mbox{for all}~~ t_0\geq \tilde{T}_{1,**}.
\label{zjscz2.5297x9dddjkkkkkd630111kksddddhhffssddddddrddroojj}
\end{equation}
Now for fixed $t_0 > \tilde{T}_{1,**}$ we let $\xi_{t_0}$ be as given by {Definition} \ref{aaalemmaddffffdssfffgg4sssdddd5630}
 and consider 
the following problem
\begin{equation}
 \left\{\begin{array}{ll}
   \tilde{v}_{\varepsilon t}-\Delta \tilde{v}_\varepsilon=\tilde{h}_\varepsilon~~~~x\in\Omega, t>t_0-1,\\
\disp\frac{\partial \tilde{v}_\varepsilon}{\partial\nu}=0,~~~x\in\partial\Omega, t>t_0+1,\\
 \disp{\tilde{v}_\varepsilon(x,t_0-1)=0},\quad
x\in \Omega,\\
 \end{array}\right.
\label{222zjscz2.52ssdd9ddff7x96301sddd11kddffkhhffrssddroojj}
\end{equation}
where
$$\tilde{h}_\varepsilon(\cdot,t):=\xi_{t_0}(t)(-c_\varepsilon+n_\varepsilon -u_\varepsilon\cdot\nabla c_\varepsilon)+ \xi'_{t_0}(t)c_\varepsilon.$$
Here using \dref{zjscz2.5297x9dddjkkkkkd630111kksddddhddddhffssddrddroojj}--\dref{zjscz2.5297x9dddjkkkkkd630111kksddddhhffssddddddrddroojj} we see that for some $\tilde{C}_{4,**}> 0$ and $\tilde{C}_{5,**}>0$ such that
\begin{equation}
\begin{array}{rl}
&\disp{\int_{t_0-1}^{t_0+1}\|\tilde{h}_\varepsilon(\cdot,t)\|_{L^{2p}(\Omega)}^{2p}dt}\\
\leq&\disp{\int_{t_0-1}^{t_0+1}\|\xi_{t_0}(t)(-c_\varepsilon+n_\varepsilon -u_\varepsilon\cdot\nabla c_\varepsilon)+ \xi'_{t_0}(t)c_\varepsilon\|_{L^{2p}(\Omega)}^{2p}dt}\\
\leq&\disp{\tilde{C}_{4,**}\int_{t_0-1}^{t_0+1}\left[\|c_\varepsilon(\cdot,t)\|_{L^{2p}(\Omega)}^{2p}+\|n_\varepsilon(\cdot,t)\|_{L^{2p}(\Omega)}^{2p}\right]dt}\\
&\disp{+\tilde{C}_{4,**}\int_{t_0-1}^{t_0+1}\left[\|u_\varepsilon(\cdot,t)\|_{L^{\infty}(\Omega)}^{2p}\|\nabla c_\varepsilon(\cdot,t)\|_{L^{2p}(\Omega)}^{2p}+2 \|\xi'_{0}\|_{L^\infty(\mathbb{R})}^{2p}\|c_\varepsilon(\cdot,t)\|_{L^{2p}(\Omega)}^{2p}\right]dt}\\
\leq&\disp{\tilde{C}_{5,**}.}\\
\end{array}
\label{zjccffgbhjssddddcvvvbscz2.5297x96301ku}
\end{equation}
Therefore, we may argue as above to conclude from maximal Sobolev regularity estimates that
there exist $\tilde{C}_{6,**} > 0$ and $\tilde{C}_{7,**} > 0$ such that
\begin{equation}
\begin{array}{rl}
&\disp\int_{t_0-1}^{t_0+1}\|\tilde{v}_\varepsilon(\cdot,t)\|_{W^{2,p}(\Omega))}^{2p}dt
+\|\tilde{v}_{\varepsilon t}(\cdot,t)\|_{L^{2p}(\Omega)}^{2p}dt\\
\leq& \tilde{C}_{6,**} \disp\int_{t_0-1}^{t_0+1}\|\tilde{h}_\varepsilon(\cdot,t)\|_{L^{2p}(\Omega))}^{2p}dt \\
\leq& \tilde{C}_{7,**}.\\
\end{array}\label{zjscz2.5297x9dddjkkkkkd63011sddf1kkhhffssddrddroojj}
\end{equation}
Again since $\tilde{v}_\varepsilon = c_\varepsilon$ a.e. in $\Omega\times (t_0 ,t_0 + 1)$ by definition of $\xi_{t_0}$, this shows \dref{zjscz2.5297x9dddjkkkkkd630111kksddddhhffssddrddroojj} and
thereby completes the proof of Lemma \ref{lemma45eertt630hhuussddjjuuyy}.
\end{proof}
Again, this implies the H\"{o}lder estimates as follows.
\begin{lemma}\label{11lemma4563sdff0hhuujjuuyy}
Suppose that $\alpha$ and $C_S$ satisfy  \dref{x1.73142vghf48}  and \dref{x1.73142vgssdddhfjjk48}, respectively, where $C_S$ is the same as  \dref{x1.73142vghf48gg}.  
Then one can find $\mu_{*,11}\in(0, 1)$ as well as  $T_{*,11}>2$ and $C_{*,11}>0$ such that for all $\varepsilon\in(0,1)$ and $t>T_{*,11}$,
%
%
\begin{equation}
\|c_\varepsilon(\cdot,t)\|_{C^{1+\mu_{*,11},\frac{\mu_{*,11}}{2}}(\bar{\Omega}\times[t,t+1])}  \leq C_{*,11}.
\label{zjscz2.5297x9630111kkffghhhhiioo}
\end{equation}
\end{lemma}
\begin{proof}
In precisely the same manner as Lemma  \ref{lemmassdd4563sdff0hhuujjuuyy} was derived from Lemma
\ref{lemmddrgga45630hhuussddjjuuyy}, this follows from Lemma \ref{lemma45eertt630hhuussddjjuuyy} by application of a standard embedding result.
\end{proof}
Using once more Lemma \ref{x344ccffggfhhlemma45625xxhjhjuioookloghyui} and Lemma \ref{lemmaghjffggsjjjjjsddgghhmk4563025xxhjklojjkkk}, we can improve our knowledge on the regularity of $n_\varepsilon$.
\begin{lemma}\label{lemma45630hhuujjuuyy}
Under the assumptions of Theorem \ref{thaaaeorem3},
then one can find $\mu_{*,12}\in(0, 1)$ as well as  $T_{*,12}>0$ and $C_{*,12}>0$ such that whenever $\varepsilon\in (0, 1)$, we have
%
\begin{equation}
\|n_\varepsilon(\cdot,t)\|_{C^{\mu_{*,12},\frac{\mu_{*,12}}{2}}(\bar{\Omega}\times[t,t+1])}  \leq C_{*,12} ~~\mbox{for all}~~ t>T_{*,12}.
\label{ddhhhzjscz2.5297x9630111kkhhiioo}
\end{equation}
\end{lemma}
\end{lemma}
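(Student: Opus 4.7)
\medskip
\noindent\textbf{Proof proposal.}
The plan is to view the first equation of \dref{1.1fghyuisda} as a linear parabolic equation in divergence form for $n_\varepsilon$, whose ``coefficients'' and inhomogeneities are uniformly bounded for large times thanks to the regularity already accumulated in Lemmas \ref{x344ccffggfhhlemma45625xxhjhjuioookloghyui}, \ref{xccffgghhlemma4563025xxhjhjuioookloghyui}, \ref{lemmassdd4563sdff0hhuujjuuyy} and \ref{11lemma4563sdff0hhuujjuuyy}, and then invoke standard De Giorgi--Nash--Moser / Ladyzhenskaya--Solonnikov--Ural'ceva Hölder regularity for weak solutions of such parabolic problems.

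More precisely, using $\nabla\cdot u_\varepsilon=0$, the first equation can be rewritten as
\begin{equation*}
n_{\varepsilon t}-\Delta n_\varepsilon=-\nabla\cdot\Phi_\varepsilon,\qquad \Phi_\varepsilon:=n_\varepsilon F_\varepsilon(n_\varepsilon)S_\varepsilon(x,n_\varepsilon,c_\varepsilon)\nabla c_\varepsilon+n_\varepsilon u_\varepsilon,
\end{equation*}
subject to the homogeneous Neumann boundary condition $\nabla n_\varepsilon\cdot\nu=0$ on $\partial\Omega$ (since $\Phi_\varepsilon\cdot\nu=0$ on $\partial\Omega$ because of $u_\varepsilon|_{\partial\Omega}=0$ and the no-flux boundary condition on $c_\varepsilon$). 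First I would fix $T_{\ast,12}>\max\{T_{\ast,7},T_{\ast,9},T_{\ast,11}\}+2$ and observe that by Lemma \ref{x344ccffggfhhlemma45625xxhjhjuioookloghyui}, Lemma \ref{xccffgghhlemma4563025xxhjhjuioookloghyui} and \dref{x1.73142vghf48gg} together with Lemma \ref{lemmassdd4563sdff0hhuujjuuyy} (which ensures $\|u_\varepsilon(\cdot,s)\|_{L^\infty(\Omega)}\leq C$ uniformly), there is $C_{\ast}>0$ with
\begin{equation*}
\|n_\varepsilon\|_{L^\infty(\Omega\times(t-1,t+2))}+\|\Phi_\varepsilon\|_{L^\infty(\Omega\times(t-1,t+2))}\leq C_{\ast}\qquad\text{for all }t>T_{\ast,12},\ \varepsilon\in(0,1).
\end{equation*}

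With these uniform $L^\infty$ bounds in hand, standard interior and boundary Hölder regularity for weak solutions of divergence-form parabolic equations with $L^\infty$ right-hand side (see e.g.\ Ladyzhenskaya--Solonnikov--Ural'ceva, Ch.~III, Thm.~10.1 or the parabolic De Giorgi--Nash--Moser theory adapted to bounded smooth domains under Neumann conditions) yields the existence of $\mu_{\ast,12}\in(0,1)$ and $\tilde C>0$, both independent of $\varepsilon$ and of $t$, such that
\begin{equation*}
\|n_\varepsilon\|_{C^{\mu_{\ast,12},\mu_{\ast,12}/2}(\bar\Omega\times[t-\tfrac12,t+\tfrac32])}\leq \tilde C\,\bigl(\|n_\varepsilon\|_{L^\infty(\Omega\times(t-1,t+2))}+\|\Phi_\varepsilon\|_{L^\infty(\Omega\times(t-1,t+2))}\bigr)
\end{equation*}
for every $t>T_{\ast,12}$. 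In particular, restricting to $[t,t+1]$ gives \dref{ddhhhzjscz2.5297x9630111kkhhiioo} with $C_{\ast,12}:=2\tilde C C_{\ast}$. To handle the fact that the ambient regularity statement used is an interior-in-time estimate, one can apply it on the slightly larger slab $(t-1,t+2)$, which is legitimate once $T_{\ast,12}$ is chosen large enough, or equivalently introduce a temporal cut-off $\xi_{t-1}$ as in Definition \ref{aaalemmaddffffdssfffgg4sssdddd5630} and apply the boundary Hölder estimate to the resulting zero-initial-value Neumann problem on $(t-1,t+2)$, exactly in the spirit of Lemma \ref{lemmddrgga45630hhuussddjjuuyy} and Lemma \ref{lemma45eertt630hhuussddjjuuyy}.

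The main obstacle I anticipate is not the interior estimate itself, but verifying the quoted parabolic Hölder regularity up to the Neumann boundary in a form whose constants are genuinely independent of $\varepsilon$ and of the reference time $t$. This is handled by noting that the equation, the boundary condition, and the $L^\infty$ bounds on $n_\varepsilon$ and $\Phi_\varepsilon$ are all translation-invariant in time, so that the estimate from a fixed slab $(0,3)$ transfers verbatim to $(t-1,t+2)$ for every $t>T_{\ast,12}$; the smoothness of $\partial\Omega$ and the homogeneity of the Neumann condition then allow the standard reflection/flattening argument of De Giorgi--Nash--Moser to go through with universal constants, completing the proof.
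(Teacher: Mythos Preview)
Your argument is correct and reaches the same conclusion, but it proceeds along a somewhat different technical route than the paper. The paper first records not only the $L^\infty$ bound on $n_\varepsilon$ but also the eventual \emph{H\"older} regularity of $\nabla c_\varepsilon$ and $u_\varepsilon$ (from Lemmas \ref{11lemma4563sdff0hhuujjuuyy} and \ref{lemmassdd4563sdff0hhuujjuuyy}), then writes the variation-of-constants formula
\[
n_\varepsilon(\cdot,t)=e^{(t-\tilde T)\Delta}n_\varepsilon(\cdot,\tilde T)+\int_{\tilde T}^t e^{(t-s)\Delta}\nabla\cdot\bigl(n_\varepsilon F_\varepsilon S_\varepsilon\nabla c_\varepsilon+n_\varepsilon u_\varepsilon\bigr)\,ds
\]
and invokes smoothing estimates for the Neumann heat semigroup to extract the H\"older bound. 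You instead appeal directly to De Giorgi--Nash--Moser / LSU regularity for divergence-form parabolic equations with a bounded flux $\Phi_\varepsilon$. The advantage of your route is that it is more self-contained and in fact requires only the $L^\infty$ control on $n_\varepsilon$, $\nabla c_\varepsilon$ and $u_\varepsilon$, not their H\"older norms; the paper's semigroup approach is more in keeping with the style used elsewhere in the article and sets up slightly more structure for the subsequent Schauder step.

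One small imprecision: the reason $\Phi_\varepsilon\cdot\nu=0$ on $\partial\Omega$ is not the Neumann condition $\nabla c_\varepsilon\cdot\nu=0$ (for a genuinely tensor-valued $S_\varepsilon$ this would not suffice), but rather that $S_\varepsilon=\rho_\varepsilon S$ with $\rho_\varepsilon\in C_0^\infty(\Omega)$ vanishing near $\partial\Omega$; combined with $u_\varepsilon|_{\partial\Omega}=0$ this gives the claimed homogeneous Neumann problem for $n_\varepsilon$.
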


\begin{proof}
Firstly,  in light of Lemma \ref{x344ccffggfhhlemma45625xxhjhjuioookloghyui} as well as  Lemma \ref{11lemma4563sdff0hhuujjuuyy} and Lemma  \ref{lemmassdd4563sdff0hhuujjuuyy}, there exist
 $\tilde{\mu}_{1,***}\in(0,1)$ as well as $\tilde{T}_{1,***} > 2$ and $\tilde{C}_{1,***} > 0$ such that  for all $t>\tilde{T}_{1,***}$,
\begin{equation}
\begin{array}{rl}
&\|\nabla c_\varepsilon\|_{C^{\tilde{\mu}_{1,***},\frac{\tilde{\mu}_{1,***}}{2}}(\bar{\Omega}\times[t,t+1])}+\| n_\varepsilon\|_{L^{\infty}(\bar{\Omega}\times[t,t+1])}
 +\| u_\varepsilon\|_{C^{\tilde{\mu}_{1,***},\frac{\tilde{\mu}_{1,***}}{2}}(\bar{\Omega}\times[t,t+1])}
 \leq \tilde{C}_{1,***}.
\end{array}\label{222zjscz2.52ssdd97x9630111fssddfggkkhhffrroojj}
\end{equation}
In order to  derive \dref{ddhhhzjscz2.5297x9630111kkhhiioo}, we may now use that $\nabla\cdot(n_\varepsilon u_\varepsilon) = u_\varepsilon\cdot\nabla n_\varepsilon$ thanks to
the fact that $\nabla u_\varepsilon\equiv 0$, we have
\begin{equation}
\begin{array}{rl}
&n_{\varepsilon}(\cdot,t)\\
=&e^{(t-\tilde{T}_{1,***})\Delta}n_\varepsilon(\cdot,\tilde{T}_{1,***})\\
&\disp+\int_{\tilde{T}_{1,***}}^{t}e^{(t-s)\Delta}\nabla\cdot\{n_{\varepsilon}(\cdot,s)
F_{\varepsilon}(n_{\varepsilon}(\cdot,s))S_\varepsilon(x,n_\varepsilon(\cdot,s),c_\varepsilon(\cdot,s))\nabla c_\varepsilon(\cdot,s)+n_{\varepsilon}(\cdot,s) u_{\varepsilon}(\cdot,s)\} ds\\
\end{array}\label{5555hhjjjfghsdddffdfggbnmcz2.5ghju48cfg924ghyuji}
\end{equation}
for all $t>\tilde{T}_{1,***}$.
Therefore, in view of  \dref{222zjscz2.52ssdd97x9630111fssddfggkkhhffrroojj} and the  standard regularity arguments involving well-known smoothing properties of the Neumann heat semigroup $(e^{\tau\Delta})_{\tau\geq0}$ (see e.g.  \cite{Winkler21215}),
 one can readily derive the existence of $\mu_{*,12}\in(0, 1)$ as well as  $T_{*,12}$ and $C_{*,12}>0$
such that for any  $\varepsilon>0$, 
%
\dref{ddhhhzjscz2.5297x9630111kkhhiioo} holds.
\end{proof}


Straightforward applications of standard Schauder estimates for the Stokes evolution equation and the
heat equation, respectively, finally yield eventual smoothness of the solution $(n_\varepsilon, c_\varepsilon, u_\varepsilon)$.

\begin{lemma}\label{lemma45630hhuujjsdfffggguuyy}
Under the assumptions of Theorem \ref{thaaaeorem3},
then one can find $\mu_{*,13}\in(0, 1)$ and $T_{*,13}>2$ such that for some $C_{*,13} > 0$ with the property that for each $\varepsilon\in (0, 1)$,
\begin{equation}
\|u_\varepsilon(\cdot,t)\|_{C^{2+\mu_{*,13},1+\frac{\mu_{*,13}}{2}}(\bar{\Omega}\times[t,t+1];\mathbb{R}^3)} \leq C_{*,13} ~~\mbox{for all}~~ t>T_{*,13}
\label{222zjscz2.5297x9630111kkhhffrroojj}
\end{equation}
%
%
as well as
\begin{equation}
\|c_\varepsilon(\cdot,t)\|_{C^{2+\mu_{*,13},1+\frac{\mu_{*,13}}{2}}(\bar{\Omega}\times[t,t+1])}  \leq C_{*,13} ~~\mbox{for all}~~ t>T_{*,13}
\label{222zjscz2.5297x9630111kkhhiioo}
\end{equation}
and
\begin{equation}
\|n_\varepsilon(\cdot,t)\|_{C^{2+\mu_{*,13},1+\frac{\mu_{*,13}}{2}}(\bar{\Omega}\times[t,t+1])} \leq C_{*,13} ~~\mbox{for all}~~ t>T_{*,13}.
\label{222zjscz2.5297x96dfgg30111kkhhffrroojj}
\end{equation}
\end{lemma}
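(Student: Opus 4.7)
The plan is a standard parabolic bootstrap argument: from the previous lemmas we already have uniform $C^{1+\mu,\mu/2}$ control of $u_\varepsilon$ and $c_\varepsilon$ and $C^{\mu,\mu/2}$ control of $n_\varepsilon$ on strips $\bar\Omega\times[t,t+1]$ for all large $t$ and all $\varepsilon\in(0,1)$. This means each of the three equations in \dref{1.1fghyuisda} may be regarded, on such strips, as a linear problem with Hölder–continuous data and coefficients, and the conclusion will follow from classical parabolic and Stokes Schauder theory applied on fixed–length time strips shifted arbitrarily far into the future (using the time cut-off $\xi_{t_0}$ from Definition \ref{aaalemmaddffffdssfffgg4sssdddd5630} exactly as in Lemmas \ref{lemmddrgga45630hhuussddjjuuyy} and \ref{lemma45eertt630hhuussddjjuuyy} to dispense with matching initial data).

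I would first upgrade $u_\varepsilon$: by Lemma \ref{lemma45630hhuujjuuyy}, Lemma \ref{lemmassdd4563sdff0hhuujjuuyy} and the $\varepsilon$-uniform $L^\infty$-bound on $Y_\varepsilon u_\varepsilon$ coming from Lemma \ref{aaalemmaddffffsddddfffgg4sssdddd5630} and the contractivity of $(I+\varepsilon A)^{-1}$, the inhomogeneity $\mathcal{P}[-\kappa(Y_\varepsilon u_\varepsilon\cdot\nabla)u_\varepsilon+n_\varepsilon\nabla\phi]$ of the projected Stokes equation satisfied by $u_\varepsilon$ lies in $C^{\mu,\mu/2}(\bar\Omega\times[t,t+1];\mathbb{R}^3)$ uniformly in $\varepsilon$ and in large $t$. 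Pairing this with the Solonnikov–type Schauder theory for the Stokes system, applied to the cut-off function $v_\varepsilon:=\xi_{t_0}u_\varepsilon$ so that the initial slice is zero, yields \dref{222zjscz2.5297x9630111kkhhffrroojj} together with the existence of an associated pressure $P_\varepsilon\in C^{1,0}(\bar\Omega\times[T,\infty))$. Next I would upgrade $c_\varepsilon$: with $u_\varepsilon\in C^{2+\mu,1+\mu/2}$ just obtained, $n_\varepsilon\in C^{\mu,\mu/2}$ from Lemma \ref{lemma45630hhuujjuuyy} and $\nabla c_\varepsilon\in C^{\mu,\mu/2}$ from Lemma \ref{11lemma4563sdff0hhuujjuuyy}, the forcing $n_\varepsilon-c_\varepsilon-u_\varepsilon\cdot\nabla c_\varepsilon$ of $c_{\varepsilon t}-\Delta c_\varepsilon=n_\varepsilon-c_\varepsilon-u_\varepsilon\cdot\nabla c_\varepsilon$ lies in $C^{\mu,\mu/2}$ uniformly; standard Schauder theory for the heat equation under homogeneous Neumann data, again applied after cutting off in time, then gives \dref{222zjscz2.5297x9630111kkhhiioo}.

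Finally, and this is where the main technical obstacle sits, I would upgrade $n_\varepsilon$. Expanding the divergence in the first equation of \dref{1.1fghyuisda} one rewrites it in non-divergence form as
\begin{equation*}
n_{\varepsilon t}-\Delta n_\varepsilon+\tilde b_\varepsilon\cdot\nabla n_\varepsilon+\tilde c_\varepsilon\, n_\varepsilon=0,
\end{equation*}
with
\begin{equation*}
\tilde b_\varepsilon:=F_\varepsilon(n_\varepsilon)S_\varepsilon(x,n_\varepsilon,c_\varepsilon)\nabla c_\varepsilon+u_\varepsilon,\qquad \tilde c_\varepsilon:=\nabla\!\cdot\!\bigl(F_\varepsilon(n_\varepsilon)S_\varepsilon(x,n_\varepsilon,c_\varepsilon)\nabla c_\varepsilon\bigr).
\end{equation*}
The $C^{2+\mu,1+\mu/2}$ control on $c_\varepsilon$ and $u_\varepsilon$ already established, combined with the $C^{\mu,\mu/2}$ control on $n_\varepsilon$ and the smoothness of $F_\varepsilon$ and $S$ in all their arguments, shows that $\tilde b_\varepsilon\in C^{\mu,\mu/2}$ and $\tilde c_\varepsilon\in C^{\mu,\mu/2}$. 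The delicate point is precisely to verify this Hölder continuity of $\tilde c_\varepsilon$ with $\varepsilon$-independent bounds: $\tilde c_\varepsilon$ couples second derivatives of $c_\varepsilon$ (for which Lemma \ref{11lemma4563sdff0hhuujjuuyy} alone is insufficient and one must use the newly–established \dref{222zjscz2.5297x9630111kkhhiioo}) with the factor $n_\varepsilon$, which is only in $C^{\mu,\mu/2}$, and one has to use $S\in C^2$ and the chain rule on the composite $S_\varepsilon(x,n_\varepsilon,c_\varepsilon)$ to absorb $\nabla n_\varepsilon$ without creating uncontrolled terms. Once this Hölder control of the coefficients is in place, interior-and-boundary Schauder theory for linear parabolic equations of non-divergence type under the homogeneous Neumann condition $\partial_\nu n_\varepsilon=0$, applied yet again after a temporal cut-off, gives \dref{222zjscz2.5297x96dfgg30111kkhhffrroojj} and completes the proof.
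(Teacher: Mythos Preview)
Your overall strategy coincides with the paper's: a Schauder bootstrap on time-shifted strips using the cut-off $\xi_{t_0}$, applied to each of the three equations in turn. The paper performs the upgrades in the order $c_\varepsilon\to u_\varepsilon\to n_\varepsilon$ rather than your $u_\varepsilon\to c_\varepsilon\to n_\varepsilon$, but this reordering is harmless.

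There is, however, a genuine gap in your treatment of the $n_\varepsilon$-equation. With your definitions
\[
\tilde b_\varepsilon=F_\varepsilon(n_\varepsilon)S_\varepsilon(x,n_\varepsilon,c_\varepsilon)\nabla c_\varepsilon+u_\varepsilon,\qquad
\tilde c_\varepsilon=\nabla\!\cdot\!\bigl(F_\varepsilon(n_\varepsilon)S_\varepsilon(x,n_\varepsilon,c_\varepsilon)\nabla c_\varepsilon\bigr),
\]
expanding $\tilde c_\varepsilon$ via the chain rule produces terms of the form $F_\varepsilon'(n_\varepsilon)\,\nabla n_\varepsilon$ and $\partial_n S_\varepsilon\,\nabla n_\varepsilon$. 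The ingredients you list --- $c_\varepsilon,u_\varepsilon\in C^{2+\mu,1+\mu/2}$ and $n_\varepsilon\in C^{\mu,\mu/2}$ --- give no control whatsoever over $\nabla n_\varepsilon$, so the assertion ``$\tilde c_\varepsilon\in C^{\mu,\mu/2}$'' does not follow, and the phrase ``absorb $\nabla n_\varepsilon$ without creating uncontrolled terms'' does not explain a mechanism.

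There are two standard repairs. Either (i) redistribute: since every occurrence of $\nabla n_\varepsilon$ in $n_\varepsilon\tilde c_\varepsilon$ enters linearly with a coefficient that is a smooth function of $(x,n_\varepsilon,c_\varepsilon,\nabla c_\varepsilon)$, move those terms into the drift and work with modified coefficients $\hat b_\varepsilon,\hat c_\varepsilon$ that depend only on $n_\varepsilon,c_\varepsilon,\nabla c_\varepsilon,D^2c_\varepsilon,u_\varepsilon$ --- all now in $C^{\mu,\mu/2}$; or (ii) insert an intermediate step: apply divergence-form Schauder (or heat-semigroup) estimates to $n_{\varepsilon t}-\Delta n_\varepsilon=-\nabla\!\cdot(n_\varepsilon\tilde h_\varepsilon)$ with $n_\varepsilon\tilde h_\varepsilon\in C^{\mu,\mu/2}$ to first get $n_\varepsilon\in C^{1+\mu,(1+\mu)/2}$, after which your $\tilde c_\varepsilon$ is genuinely H\"older and the non-divergence Schauder step goes through. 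The paper effectively follows route (ii), asserting a $C^{1+\mu}$ bound on $n_\varepsilon$ (its equation \eqref{222zjscz2.52ssdd97x9630111kddffkhhffrssddroojj}) before the final Schauder application; its citation of Lemma~\ref{lemma45630hhuujjuuyy} for that bound is itself loose, but the missing argument is the one just described.
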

\begin{proof}
We first combine Lemma \ref{lemmassdd4563sdff0hhuujjuuyy} as well as Lemma \ref{11lemma4563sdff0hhuujjuuyy} and Lemma \ref{lemma45630hhuujjuuyy} to infer
the existence of $\tilde{\mu}_{1,****}\in(0,1)$ as well as $\tilde{T}_{1,****} > 0$ and $\tilde{C}_{1,****} > 0$ such that  for all $t>\tilde{T}_{1,****}$,
\begin{equation}
\begin{array}{rl}
&\|u_\varepsilon\cdot \nabla c_\varepsilon\|_{C^{\tilde{\mu}_{1,****},\frac{\tilde{\mu}_{1,****}}{2}}(\bar{\Omega}\times[t,t+1])}+\|u_\varepsilon\cdot \nabla n_\varepsilon\|_{C^{\tilde{\mu}_{1,****},\frac{\tilde{\mu}_{1,****}}{2}}(\bar{\Omega}\times[t,t+1])}\\
 &+\| n_\varepsilon\|_{C^{\tilde{\mu}_{1,****},\frac{\tilde{\mu}_{1,****}}{2}}(\bar{\Omega}\times[t,t+1])}+\| c_\varepsilon\|_{C^{\tilde{\mu}_{1,****},\frac{\tilde{\mu}_{1,****}}{2}}(\bar{\Omega}\times[t,t+1])}\\
 \leq & \tilde{C}_{1,****}.
\end{array}\label{222zjscz2.52ssdd97x9630111ffggkkhhffrroojj}
\end{equation}
The standard parabolic Schauder estimates applied to the second  equation in \dref{1.1fghyuisda} (see \cite{Ladyzenskajaggk7101})
thus provide $\tilde{C}_{2,****} > 0$ fulfilling
\begin{equation}
\|c_\varepsilon\|_{C^{2+\tilde{\mu}_{1,****} ,1+\frac{\tilde{\mu}_{1,****} }{2}}(\bar{\Omega}\times[t,t+1])}\leq \tilde{C}_{2,****} ~~~\mbox{for all}~~ t > \tilde{T}_{1,****}.
\label{222zjscz2.52ssdd97x9630111kddffkhhffrroojj}
\end{equation}
According to Lemma \ref{lemma45630hhuujjuuyy}, it is possible to fix $\tilde{\mu}_{2,****}\in (0,1)$ as well as
$\tilde{T}_{2,****}> 2$ and $\tilde{C}_{3,****}> 0$ such that
\begin{equation}
\|n_\varepsilon\|_{C^{1+\tilde{\mu}_{2,****},\frac{\tilde{\mu}_{2,****}}{2}}(\bar{\Omega}\times[t,t+1])}+\| u_\varepsilon\|_{C^{1+\tilde{\mu}_{2,****},\frac{\tilde{\mu}_{2,****}}{2}}(\bar{\Omega}\times[t,t+1];\mathbb{R}^3)}\leq \tilde{C}_{3,****}~~~\mbox{for all}~~ t > \tilde{T}_{2,****}.
\label{222zjscz2.52ssdd97x9630111kddffkhhffrssddroojj}
\end{equation}
We next set $T:= \tilde{T}_{2,****}+1$ and let $t_0 > T$ be given. Then with $\xi_{t_0}$ taken from Definition \ref{aaalemmaddffffdssfffgg4sssdddd5630},
we again use that $v_\varepsilon(\cdot,t) := \xi_{t_0}u_\varepsilon(\cdot,t), (x\in\Omega,t> t_0-1)$, is a solution of
\begin{equation}
 \left\{\begin{array}{ll}
   v_{\varepsilon t}-\Delta v_\varepsilon=h_\varepsilon~~~~x\in\Omega, t>t_0-1,\\
 \disp{v_\varepsilon(x,t_0-1)=0},\quad
x\in \Omega,\\
 \end{array}\right.
\label{222zjscz2.52ssdd9ddff7x9630111kddffkhhffrssddroojj}
\end{equation}
 where $$h_\varepsilon=-\kappa(Y_\varepsilon u_\varepsilon \cdot \nabla)v+\nabla (\xi P_\varepsilon)+\xi n_\varepsilon\nabla \phi+\xi' u_\varepsilon.$$
 Now from \dref{222zjscz2.52ssdd97x9630111kddffkhhffrssddroojj} and the smoothness of $\xi$ we readily obtain $\tilde{\mu}_{3,****}\in (0,1)$
and $\tilde{C}_{4,****}> 0$ fulfilling
\begin{equation}
\|h_\varepsilon\|_{C^{\tilde{\mu}_{3,****},\frac{\tilde{\mu}_{3,****}}{2}}(\bar{\Omega}\times[t_0-1,t_0+1];\mathbb{R}^3)}\leq \tilde{C}_{4,****},
\label{222zjscz2.52ssdd97x9dddd630111kddffkhhffrssddroojj}
\end{equation}
so that, the regularity estimates from Schauder theory for the Stokes evolution equation
(\cite{SolonnikovSolonnikov1216}) ensure that \dref{222zjscz2.52ssdd9ddff7x9630111kddffkhhffrssddroojj} possesses a classical solution $\bar{v}\in {C^{2+\tilde{\mu}_{3,****},1+\frac{\tilde{\mu}_{3,****}}{2}}(\bar{\Omega}\times[t_0-1,t_0+1])}$ satisfying
\begin{equation}
\|\bar{v}\|_{C^{2+\tilde{\mu}_{3,****},1+\frac{\tilde{\mu}_{3,****}}{2}}(\bar{\Omega}\times[t_0-1,t_0+1];\mathbb{R}^3)}\leq \tilde{C}_{5,****}
\label{222zjscz2.52ssdd97x9dddd63011ssdd1kddffkhhffrssddroojj}
\end{equation}
with some $\tilde{C}_{5,****} > 0$ which is independent of $t_0$. This combined with the
uniqueness property of \dref{222zjscz2.52ssdd9ddff7x9630111kddffkhhffrssddroojj}, one can prove
\begin{equation}
\|u_\varepsilon\|_{C^{2+\tilde{\mu}_{3,****},1+\frac{\tilde{\mu}_{3,****}}{2}}(\bar{\Omega}\times[t,t+1];\mathbb{R}^3)}\leq \tilde{C}_{6,****}.
\label{222zjscz2.52ssdd9ddff7x9dddd63011ssdd1kddffkhhffrssddroojj}
\end{equation}
Again relying on Lemma \ref{lemma45630hhuujjuuyy}, this in turn warrants that for
some $\tilde{\mu}_{4,****}\in(0,1), \tilde{T}_{4,****}> 0$ and $\tilde{C}_{7,****}> 0$ such that for all $t>\tilde{T}_{4,****}$
\begin{equation}
\|\nabla\cdot(n_{\varepsilon}F_{\varepsilon}(n_{\varepsilon})S_\varepsilon(x, n_{\varepsilon}, c_{\varepsilon})\nabla c_{\varepsilon})\|_{C^{\tilde{\mu}_{4,****},\frac{\tilde{\mu}_{4,****}}{2}}(\bar{\Omega}\times[t,t+1])}+\|u_{\varepsilon}\cdot\nabla n_{\varepsilon}\|_{C^{\tilde{\mu}_{4,****},\frac{\tilde{\mu}_{4,****}}{2}}(\bar{\Omega}\times[t,t+1])}\leq \tilde{C}_{7,****},
\label{222zjscz2.52ssdd9ddff7x9ddkklldd63011ssdd1kddffkhhffrssddroojj}
\end{equation}
which along with the Schauder theory  establishes 
\begin{equation}
\|n_\varepsilon\|_{C^{2+\tilde{\mu}_{4,****},1+\frac{\tilde{\mu}_{4,****}}{2}}(\bar{\Omega}\times[t,t+1])}\leq \tilde{C}_{8,****}.
\label{222zjscz2.52ssdd9ddff7x9dsddddd63011ssdd1kddffkhhffrssddroojj}
\end{equation}
Finally, choose $$T_{*,13}=\max\{\tilde{T}_{1,****},\tilde{T}_{2,****},\tilde{T}_{3,****},\tilde{T}_{4,****}\}$$
 and $$\mu_{*,13}=\min\{\tilde{\mu}_{1,****},\tilde{\mu}_{2,****},\tilde{\mu}_{3,****},\tilde{\mu}_{4,****}\},$$ then \dref{222zjscz2.52ssdd97x9630111kddffkhhffrroojj}, \dref{222zjscz2.52ssdd9ddff7x9dddd63011ssdd1kddffkhhffrssddroojj}, \dref{222zjscz2.52ssdd9ddff7x9dsddddd63011ssdd1kddffkhhffrssddroojj} imply \dref{222zjscz2.5297x9630111kkhhffrroojj}--\dref{222zjscz2.5297x96dfgg30111kkhhffrroojj}.
\end{proof}

%
%
%
%
%
%
%
%
%
%
%
%
%
%
%
%
%
%
%
%

Having found uniform H\"{o}lder bounds on $n_\varepsilon, c_\varepsilon$ and  $u_\varepsilon$ for $\varepsilon> 0$ in the previous three lemmas (see Lemmas \ref{lemma45630hhuujjuuyy} and \ref{lemma45630hhuujjsdfffggguuyy}), also $n, c$
and $u$ share this regularity and these bounds.

\begin{lemma}\label{lemma45630223}
Assume that   $\alpha\geq1$ and $C_S<2\sqrt{C_N}$, where
 $C_N$  is the best  Poincar\'{e} constant and $C_S$ is given by  \dref{x1.73142vghf48gg}. There exist $\theta\in (0,1)$ as well as   $T_0 > 0$ and $(\varepsilon_j)_{j\in \mathbb{N}}\subset (0, 1)$ of the sequence
from Lemma \ref{sedddlemmaddffffdfffgg4sssdddd5630} such that for any $t>T_0 $
\begin{equation}
 \left\{\begin{array}{ll}
 n\in C^{2+\theta,1+\frac{\theta}{2}}(\bar{\Omega}\times[t,t+1]),\\
  c\in  C^{2+\theta,1+\frac{\theta}{2}}(\bar{\Omega}\times[t,t+1]),\\
  u\in  C^{2+\theta,1+\frac{\theta}{2}}(\bar{\Omega}\times[t,t+1];\mathbb{R}^3),\\
   \end{array}\right.\label{1.ffhhh1hhhjjkdffggdfghyuisda}
\end{equation}
 that $\varepsilon_j\searrow 0$ as $j\rightarrow\infty$  and
\begin{equation}
 \left\{\begin{array}{ll}
 n_\varepsilon\rightarrow n~~\in C^{1+\theta,\frac{\theta}{2}}(\bar{\Omega}\times[t,t+1]),\\
  c_\varepsilon\rightarrow c~~\in C^{1+\theta,\frac{\theta}{2}}(\bar{\Omega}\times[t,t+1]),\\
 u_\varepsilon\rightarrow u~~\in C^{1+\theta,\frac{\theta}{2}}(\bar{\Omega}\times[t,t+1];\mathbb{R}^3)\\
   \end{array}\right.
   and\label{1.ffgghhhhh1dffggdfghyuisda}
\end{equation}
as $\varepsilon=\varepsilon_j\searrow 0$.
Moreover, there is $C > 0$ such that
\begin{equation}
\|c(\cdot,t)\|_{C^{2+\theta,1+\frac{\theta}{2}}(\bar{\Omega}\times[t,t+1])}\leq C ~~\mbox{for all}~~ t>T_0
\label{222zjscz2.5297x9630111kkhhfsddddfrroojj}
\end{equation}
as well as
\begin{equation}
\|n(\cdot,t)\|_{C^{2+\theta,1+\frac{\theta}{2}}(\bar{\Omega}\times[t,t+1])}+\|u(\cdot,t)\|_{C^{2+\theta,1+\frac{\theta}{2}}(\bar{\Omega}\times[t,t+1]);\mathbb{R}^3)}\leq C ~~\mbox{for all}~~ t>T_0.
\label{sss222zjscz2.5297x9sdddd630111kkhhfsddddfrroojj}
\end{equation}

\end{lemma}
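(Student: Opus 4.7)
The plan is to combine the uniform eventual H\"older estimates of Lemma \ref{lemma45630hhuujjsdfffggguuyy} with the almost-everywhere convergence already extracted in Lemma \ref{lesssmma45630223}, and to promote this convergence to the H\"older topology by a standard Arzel\`a--Ascoli argument localised on the intervals $[t,t+1]$.

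First I would set $T_0 := T_{*,13}$ and $\mu_0 := \mu_{*,13}$, where these constants are provided by Lemma \ref{lemma45630hhuujjsdfffggguuyy}, so that the uniform bounds
\[
\|n_\varepsilon\|_{C^{2+\mu_0,1+\frac{\mu_0}{2}}(\bar\Omega\times[t,t+1])}+\|c_\varepsilon\|_{C^{2+\mu_0,1+\frac{\mu_0}{2}}(\bar\Omega\times[t,t+1])}+\|u_\varepsilon\|_{C^{2+\mu_0,1+\frac{\mu_0}{2}}(\bar\Omega\times[t,t+1];\mathbb{R}^3)}\leq C
\]
hold for all $\varepsilon\in(0,1)$ and all $t>T_0$, with a constant $C$ independent of $\varepsilon$ and $t$. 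Fix any $\theta\in(0,\mu_0)$. Then on each fixed interval $[t,t+1]$ the embedding $C^{2+\mu_0,1+\frac{\mu_0}{2}}\hookrightarrow C^{1+\theta,\frac{\theta}{2}}$ is compact.

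Next I would carry out a diagonal extraction: for $k\in\mathbb{N}$ consider the intervals $[T_0+k,T_0+k+1]$; along the sequence $(\varepsilon_j)$ from Lemma \ref{sedddlemmaddffffdfffgg4sssdddd5630}, Arzel\`a--Ascoli gives a subsequence along which $n_{\varepsilon_j}, c_{\varepsilon_j}, u_{\varepsilon_j}$ converge in $C^{1+\theta,\frac{\theta}{2}}(\bar\Omega\times[T_0+k,T_0+k+1])$ together with their relevant first-order spatial derivatives. A Cantor diagonal choice produces a single subsequence, still denoted $(\varepsilon_j)$, whose restrictions to every $[t,t+1]\subset[T_0,\infty)$ converge in $C^{1+\theta,\frac{\theta}{2}}$. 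Because Lemma \ref{lesssmma45630223} already provides a.e. convergence to $(n,c,u)$, the limits obtained here must coincide with $(n,c,u)$ by uniqueness of a.e.\ limits, which yields \dref{1.ffgghhhhh1dffggdfghyuisda}.

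Finally, since convergence in $C^{1+\theta,\theta/2}$ together with the uniform bound in $C^{2+\mu_0,1+\mu_0/2}$ forces the limit itself to lie in $C^{2+\theta,1+\theta/2}$ (by lower semicontinuity of the H\"older seminorm under pointwise convergence, combined with the weak-$\ast$ compactness of bounded sets in H\"older spaces), one obtains \dref{1.ffhhh1hhhjjkdffggdfghyuisda} and the estimates \dref{222zjscz2.5297x9630111kkhhfsddddfrroojj}--\dref{sss222zjscz2.5297x9sdddd630111kkhhfsddddfrroojj} with the very same constant from Lemma \ref{lemma45630hhuujjsdfffggguuyy}. The main (mild) technical point is to make sure the diagonal extraction is compatible with the a.e.\ subsequence from Lemma \ref{lesssmma45630223}; this is handled by always working inside that sequence when invoking Arzel\`a--Ascoli. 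Beyond that, no new PDE analysis is required: all the hard work has been done in Lemmas \ref{lemmddrgga45630hhuussddjjuuyy}--\ref{lemma45630hhuujjsdfffggguuyy}.
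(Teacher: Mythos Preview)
Your proposal is correct and follows essentially the same approach as the paper's own proof, which simply cites Lemma~\ref{lemma45630hhuujjsdfffggguuyy} together with the previously extracted subsequence and invokes a compactness argument (the paper says ``Aubin--Lions'', but in this H\"older setting your phrasing via Arzel\`a--Ascoli and compact embedding $C^{2+\mu_0,1+\mu_0/2}\hookrightarrow C^{1+\theta,\theta/2}$ is the more precise formulation). Your explicit diagonal extraction over the intervals $[T_0+k,T_0+k+1]$ and the identification of the limit through the a.e.\ convergence from Lemma~\ref{lesssmma45630223} fill in exactly the details the paper leaves implicit.
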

\begin{proof}
According to  Lemma \ref{lemma45630hhuujjsdfffggguuyy} and Lemma \ref{sedddlemmaddffffdfffgg4sssdddd5630}, an application of the Aubin-Lions lemma (see \cite{Simon}) provides a sequence
$(\varepsilon_j)_{j\in \mathbb{N}}\subset (0, 1)$ such that $\varepsilon_j\searrow 0$ as $j\rightarrow\infty$ and such that 
%
\dref{1.ffhhh1hhhjjkdffggdfghyuisda}--\dref{sss222zjscz2.5297x9sdddd630111kkhhfsddddfrroojj} hold.  And thereby completes the proof.
\end{proof}

\begin{lemma}\label{sssslemma45ssddddff630hhuujjsdfffggguuyy}
Let $\alpha\geq1$.
Then one can find $\theta\in(0, 1)$ and $T>0$ such that 
\begin{equation}
\|c(\cdot,t)\|_{C^{2+\theta,1+\frac{\theta}{2}}(\bar{\Omega}\times[T,\infty))} \leq C
\label{222zjscffgggz2.5fff297x9630111kkhhffrroojj}
\end{equation}
%
%
as well as
\begin{equation}
\|u(\cdot,t)\|_{C^{2+\theta,1+\frac{\theta}{2}}(\bar{\Omega}\times[T,\infty);\mathbb{R}^3)} \leq C
\label{222zjscffgggz2.5sdddff297x9630111kkhhffrroojj}
\end{equation}
and
\begin{equation}
\|n(\cdot,t)\|_{C^{2+\theta,1+\frac{\theta}{2}}(\bar{\Omega}\times[T,\infty))}   \leq C.
\label{222zjscz2ggggg.5297x9630111kkhhiioo}
\end{equation}
\end{lemma}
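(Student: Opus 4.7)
The plan is to upgrade the local-in-time Schauder-type estimates from Lemma \ref{lemma45630223} into the claimed global-in-time bounds on $\bar{\Omega}\times[T,\infty)$ by a simple covering and gluing procedure. The crucial point to exploit is that the bound provided by Lemma \ref{lemma45630223} is \emph{uniform} in the base time $t$: for some $\theta\in(0,1)$ and some $K>0$ we have
\begin{equation*}
\|n\|_{C^{2+\theta,1+\frac{\theta}{2}}(\bar{\Omega}\times[t,t+1])}+\|c\|_{C^{2+\theta,1+\frac{\theta}{2}}(\bar{\Omega}\times[t,t+1])}+\|u\|_{C^{2+\theta,1+\frac{\theta}{2}}(\bar{\Omega}\times[t,t+1];\mathbb{R}^3)}\leq K
\end{equation*}
for every $t>T_0$, with $T_0$ as in Lemma \ref{lemma45630223}. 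I will set $T:=T_0+1$ and show that a single constant $C$ (depending only on $K$) controls the corresponding norms on $\bar{\Omega}\times[T,\infty)$.

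First, the sup-norm parts of the $C^{2+\theta,1+\frac{\theta}{2}}$ norms are immediate: for every multi-index $\beta$ with $|\beta|\leq 2$ and every $k\in\{0,1\}$ satisfying $|\beta|+2k\leq 2$, one has
\begin{equation*}
\sup_{(x,t)\in\bar{\Omega}\times[T,\infty)}|\partial_t^k\partial_x^\beta w(x,t)|=\sup_{t\geq T_0+1}\sup_{(x,s)\in\bar{\Omega}\times[t,t+1]}|\partial_s^k\partial_x^\beta w(x,s)|\leq K
\end{equation*}
for $w\in\{n,c,u\}$, simply because the half-line $[T,\infty)$ is covered by the cylinders indexed by $t\in[T_0,\infty)$.

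Second, I handle the H\"older seminorms of the top-order derivatives. Given any two points $(x_1,t_1),(x_2,t_2)\in\bar{\Omega}\times[T,\infty)$ and any top-order derivative $Dw$ (i.e.\ either $\partial_t w$ or $\partial^\beta_x w$ with $|\beta|=2$), I distinguish two cases. If the parabolic distance satisfies $|x_1-x_2|+|t_1-t_2|^{1/2}\leq 1$, then setting $t_*:=\min\{t_1,t_2\}-1\in[T_0,\infty)$ puts both points inside the cylinder $\bar{\Omega}\times[t_*,t_*+2]$, which is the union of two overlapping unit cylinders to which Lemma \ref{lemma45630223} applies; the triangle inequality through an intermediate point in the overlap then bounds $|Dw(x_1,t_1)-Dw(x_2,t_2)|$ by $2K\bigl(|x_1-x_2|^\theta+|t_1-t_2|^{\theta/2}\bigr)$. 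If instead $|x_1-x_2|+|t_1-t_2|^{1/2}>1$, then one trivially has $|Dw(x_1,t_1)-Dw(x_2,t_2)|\leq 2K\leq 2K\bigl(|x_1-x_2|^\theta+|t_1-t_2|^{\theta/2}\bigr)$. Combining the two cases with the sup-norm bound yields \eqref{222zjscffgggz2.5fff297x9630111kkhhffrroojj}, \eqref{222zjscffgggz2.5sdddff297x9630111kkhhffrroojj} and \eqref{222zjscz2ggggg.5297x9630111kkhhiioo} with $C:=3K$ and the same exponent $\theta$.

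No genuine obstacle remains at this point in the paper: all the hard work (maximal Sobolev regularity for $u_\varepsilon$ and $c_\varepsilon$, the bootstrap in Lemmas \ref{lemmassdd4563sdff0hhuujjuuyy}--\ref{lemma45630hhuujjsdfffggguuyy}, and the passage to the limit in Lemma \ref{lemma45630223}) has already been carried out. The only thing that might look subtle is verifying that the covering argument is compatible with the fact that the matching on overlapping cylinders uses the \emph{same} limit triple $(n,c,u)$, but this is automatic since Lemma \ref{lemma45630223} asserts the estimate for the limit itself rather than for the approximations.
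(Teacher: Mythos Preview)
Your covering argument is correct. The paper takes a different route: instead of gluing the local estimates from Lemma \ref{lemma45630223}, it re-applies Schauder theory on the half-line by introducing a single cut-off $\xi_{T_0}$, noting that $\tilde c:=\xi_{T_0}c$ solves $\tilde c_t-\Delta\tilde c=g$ on $\Omega\times(T_0,\infty)$ with zero initial data and a source $g\in C^{\theta}(\bar\Omega\times(T_0,\infty))$ (this global H\"older bound on $g$ itself implicitly relies on the same gluing you make explicit), and then invoking the classical parabolic estimates from \cite{Ladyzenskajaggk7101} on the unbounded time interval; $u$ and $n$ are handled analogously. Your approach bypasses this second pass through Schauder theory entirely and is the more elementary of the two. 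One cosmetic simplification: once $|t_1-t_2|\leq 1$, both points already lie in the single unit cylinder $\bar\Omega\times[\min\{t_1,t_2\},\min\{t_1,t_2\}+1]$ with base $\min\{t_1,t_2\}\geq T>T_0$, so the detour through the length-two cylinder and the intermediate point is unnecessary.
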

\begin{proof}
Let $g := -\xi c+n\xi -\xi u\cdot\nabla c+c\xi'$, where $\xi:=\xi_{T_0}$ is given by {Definition} \ref{aaalemmaddffffdssfffgg4sssdddd5630}
and $T_0$ is  same as   the previous lemmas.
Then we  consider the following problem
\begin{equation}
 \left\{\begin{array}{ll}
   \tilde{c}_t-\Delta \tilde{c}=g~~~~x\in\Omega, t>T_0,\\
 \disp{\tilde{c}(T_0)=0},\quad
x\in \Omega,\\
\disp{\frac{\partial\tilde{c}}{\partial \nu}=0},\quad
x\in \partial\Omega.\\
 \end{array}\right.
\label{222zjscz2.52ssdd9dssddddff7x9630111kddffkhhffssdddrssddroojj}
\end{equation}
According to Lemma \ref{lemma45630223} and {Definition} \ref{aaalemmaddffffdssfffgg4sssdddd5630}, we can find 
$\theta\in(0,1)$ 
 such that
$$g~~\mbox{is bounded in}~~ C^{\theta} (\bar{\Omega}\times(T_0, \infty)).$$
Therefore, 
 the regularity estimates from Schauder theory for the  parabolic equation
(see e.g. III.5.1 of \cite{Ladyzenskajaggk7101}) ensure that  problem \dref{222zjscz2.52ssdd9dssddddff7x9630111kddffkhhffssdddrssddroojj} admits a unique solution $\tilde{c}\in C^{2+\theta,1+\frac{\theta}{2}}(\bar{\Omega}\times[T_0+1,\infty)).$
This combined with the property of $\xi$ implies that
\begin{equation}c\in C^{2+\theta,1+\frac{\theta}{2}}(\bar{\Omega}\times[T_0+1,\infty)).
\label{222zjscz2.52ssdd9dssddddff7x9630111kddffkhhffssddddffdrssddroojj}
\end{equation} 
Finally,
employing almost exactly the same arguments as in the proof of Lemma  \ref{lemma45630hhuujjsdfffggguuyy} (the minor necessary changes are left as an easy exercise to the reader), and taking advantage of \dref{sss222zjscz2.5297x9sdddd630111kkhhfsddddfrroojj}, we conclude that
\begin{equation}u\in C^{2+\theta,1+\frac{\theta}{2}}(\bar{\Omega}\times[T_0+1,\infty);\mathbb{R}^3)
\label{222zjscz2.52ssdd9dssddddff7x9630111kddddffffkhhffssdddrssddsdfgghhroojj}
\end{equation}
and
 \begin{equation}n\in C^{2+\theta,1+\frac{\theta}{2}}(\bar{\Omega}\times[T_0+1,\infty)),
 \label{222zjscz2.52ssdd9dssddddff7x9630111kddddffffkhhffssdddrssddssddffdfgghhroojj}
 \end{equation}
whence combining the result of \dref{222zjscz2.52ssdd9dssddddff7x9630111kddffkhhffssddddffdrssddroojj}  completes the proof.
\end{proof}

%
%

Our main result on eventual smoothness thereby becomes immediate.
%

\begin{lemma}\label{lemma4dd5630hhuujjuuyy}
Under the assumptions of Theorem \ref{thaaaeorem3},  the solution $(n,c,u)$ of \dref{1.1fghyuisda} constructed in Lemma \ref{sedddlemmaddffffdfffgg4sssdddd5630} satisfies
\begin{equation}n(\cdot,t)\rightarrow \bar{n}_0~~\mbox{as well as } ~~~c(\cdot,t)\rightarrow \bar{n}_0~~\mbox{and}~~~u(\cdot,t)\rightarrow0
~~\mbox{in}~~~L^\infty(\Omega),
\label{233ddxcvbbggdddddddfghhdfssdffffgcz2vv.5ghju4ss8cfg9ddsddddffff24ssdddghddfgggyddfggusdffji}
\end{equation}
where $\bar{n}_0=\frac{1}{|\Omega|}\int_{\Omega}n_0$.
\end{lemma}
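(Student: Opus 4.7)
\medskip

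\noindent\textbf{Proof proposal for Lemma \ref{lemma4dd5630hhuujjuuyy}.}

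The plan is to upgrade the $L^2$-type decay established for the approximations $n_\varepsilon, c_\varepsilon, u_\varepsilon$ (Lemmas \ref{11aaalemdfghkkmaddffffdfffgg4sssdddd5630} and \ref{11aaalemmaddffffdsddfffffgg4sssdddd5630}) into $L^\infty$-decay for the limit $(n,c,u)$, by combining these decay bounds with the uniform H\"older regularity of the limit triple given by Lemma \ref{sssslemma45ssddddff630hhuujjsdfffggguuyy}. The key observation is that two ingredients already in hand are complementary: Lemma \ref{11aaalemdfghkkmaddffffdfffgg4sssdddd5630} together with the a.e.\ convergences $n_\varepsilon\to n$, $c_\varepsilon\to c$ and $u_\varepsilon\to u$ from Lemma \ref{lesssmma45630223}, and Lemma \ref{aaalemmaddffffsddddfffgg4sssdddd5630}, provide exponential $L^2$-smallness of $n-\bar n_0$, $c-\bar n_0$ and $u$ as $t\to\infty$; while Lemma \ref{sssslemma45ssddddff630hhuujjsdfffggguuyy} yields a time-independent $W^{1,\infty}(\Omega)$-bound on $n(\cdot,t)-\bar n_0$, $c(\cdot,t)-\bar n_0$ and $u(\cdot,t)$ for all $t\geq T$. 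Interpolation between these two bounds then forces the $L^\infty$-norms to zero.

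First I would pass to the limit $\varepsilon=\varepsilon_j\searrow 0$ in the bounds of Lemma \ref{11aaalemdfghkkmaddffffdfffgg4sssdddd5630} and Lemma \ref{11aaalemmaddffffdsddfffffgg4sssdddd5630}. By Fatou's lemma applied to the a.e.\ convergences in Lemma \ref{lesssmma45630223}, for a.e.\ $t>0$ one obtains
\[
\|n(\cdot,t)-\bar n_0\|_{L^2(\Omega)}^2+\|c(\cdot,t)-\bar n_0\|_{L^2(\Omega)}^2
\le\liminf_{j\to\infty}\Big(\|n_{\varepsilon_j}(\cdot,t)-\bar n_0\|_{L^2(\Omega)}^2+\|c_{\varepsilon_j}(\cdot,t)-\bar n_0\|_{L^2(\Omega)}^2\Big)\le C e^{-\rho_{*,1}t},
\]
and an analogous bound $\|u(\cdot,t)\|_{L^2(\Omega)}\le C e^{-\rho_{*,3}t}$ from Lemma \ref{11aaalemmaddffffdsddfffffgg4sssdddd5630}. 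Because the limit $n,c,u$ are continuous in time on $[T_0+1,\infty)$ thanks to Lemma \ref{sssslemma45ssddddff630hhuujjsdfffggguuyy}, these pointwise-a.e.\ bounds actually hold for every $t\geq T_0+1$, so
\begin{equation}\label{plan1}
\|n(\cdot,t)-\bar n_0\|_{L^2(\Omega)}+\|c(\cdot,t)-\bar n_0\|_{L^2(\Omega)}+\|u(\cdot,t)\|_{L^2(\Omega)}\longrightarrow 0\quad\text{as }t\to\infty.
\end{equation}

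Next I would exploit the eventual smoothness. Lemma \ref{sssslemma45ssddddff630hhuujjsdfffggguuyy} supplies a constant $K>0$ and $T>0$ with
\[
\|n(\cdot,t)\|_{W^{1,\infty}(\Omega)}+\|c(\cdot,t)\|_{W^{1,\infty}(\Omega)}+\|u(\cdot,t)\|_{W^{1,\infty}(\Omega)}\le K\quad\text{for all }t\ge T,
\]
and since $\bar n_0$ is a constant this also controls $n-\bar n_0$ and $c-\bar n_0$ in $W^{1,\infty}(\Omega)$. Applying the Gagliardo--Nirenberg inequality in $\Omega\subset\mathbb{R}^3$ in the form
\[
\|f\|_{L^\infty(\Omega)}\le C\|f\|_{W^{1,\infty}(\Omega)}^{3/5}\|f\|_{L^2(\Omega)}^{2/5}+C\|f\|_{L^2(\Omega)}
\]
separately to $f=n(\cdot,t)-\bar n_0$, $f=c(\cdot,t)-\bar n_0$ and to each component of $u(\cdot,t)$, the first factor stays bounded by $K^{3/5}$ while the second factor tends to zero by \eqref{plan1}. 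This yields
\[
\|n(\cdot,t)-\bar n_0\|_{L^\infty(\Omega)}+\|c(\cdot,t)-\bar n_0\|_{L^\infty(\Omega)}+\|u(\cdot,t)\|_{L^\infty(\Omega)}\longrightarrow 0\quad\text{as }t\to\infty,
\]
which is precisely \eqref{233ddxcvbbggdddddddfghhdfssdffffgcz2vv.5ghju4ss8cfg9ddsddddffff24ssdddghddfgggyddfggusdffji}.

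The only real point of care is the $\liminf$ argument when transferring the $\varepsilon$-uniform exponential decay to the limit. Since the right-hand side $Ce^{-\rho_{*,1}t}$ is $\varepsilon$-independent and the a.e.\ convergence holds on $\Omega\times(0,\infty)$ (not just for a.e.\ fixed $t$), Fatou's lemma in $\Omega$ for a.e.\ $t$ suffices, and H\"older continuity in time from Lemma \ref{sssslemma45ssddddff630hhuujjsdfffggguuyy} upgrades ``a.e.\ $t$'' to ``every $t\ge T_0+1$''. No further functional-analytic machinery is required; the rest is the quantitative interpolation above.
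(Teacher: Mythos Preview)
Your proposal is correct and follows essentially the same strategy as the paper: combine the $L^2$-decay of $n-\bar n_0$, $c-\bar n_0$, $u$ with the uniform eventual higher-order regularity from Lemma \ref{sssslemma45ssddddff630hhuujjsdfffggguuyy}, and interpolate to obtain $L^\infty$-decay. The only cosmetic difference is the interpolation device: the paper invokes an Ehrling-type lemma based on the compact embedding $C^{2+\theta}(\bar\Omega)\hookrightarrow\hookrightarrow L^\infty(\Omega)\hookrightarrow L^2(\Omega)$, whereas you use an explicit Gagliardo--Nirenberg inequality with the $W^{1,\infty}$-bound; both choices work. Your proposal is in fact a bit more careful than the paper in spelling out, via Fatou's lemma and the time-continuity from Lemma \ref{sssslemma45ssddddff630hhuujjsdfffggguuyy}, how the $\varepsilon$-uniform $L^2$-decay passes to the limit triple $(n,c,u)$.
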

\begin{proof}
Firsly, due to Lemmas \ref{sedddlemmaddffffdfffgg4sssdddd5630}, we derive from Lemma \ref{sssslemma45ssddddff630hhuujjsdfffggguuyy} that
\begin{equation}n(t)\rightarrow \bar{n}_0~~~\mbox{as well as}~~
c(t)\rightarrow \bar{n}_0~~\mbox{and}~~u(t)\rightarrow0~~~\mbox{in}~~~L^2(\Omega)~~~\mbox{as}~~~t\rightarrow\infty,
\label{aahhxxcdfvhhhvssssssssdsssjjdfffddffssllllddcssdz2.ssdd5}
\end{equation}
where $\bar{n}_0$ is given by \dref{1111hhxxcdfvhhhvsddfffgssjjdfffsfffsddcsssz2.5}.
Next, due to Lemma \ref{sssslemma45ssddddff630hhuujjsdfffggguuyy}, one can obtain there exist positive  constants $M_1$ and $T$ such that for all $t>T$
\begin{equation}\|n (\cdot,t)\|_{C^{2+\theta}(\bar{\Omega})} +\|c (\cdot,t)\|_{C^{2+\theta}(\bar{\Omega})}+\|u (\cdot,t)\|_{C^{2+\theta}(\bar{\Omega})}  \leq M_1.
\label{aahhxxcdfvhhhvssssssssdsssjjdfffddffssddcssdz2.ssdd5}
\end{equation}
Therefore, a straightforward reasoning based on interpolation and the compactness of the first among the continuous embeddings $C^{2+\theta}(\bar{\Omega}) \hookrightarrow\hookrightarrow L^\infty (\Omega)\hookrightarrow L^2 (\Omega)$ shows that \dref{aahhxxcdfvhhhvssssssssdsssjjdfffddffssllllddcssdz2.ssdd5}
 and \dref{aahhxxcdfvhhhvssssssssdsssjjdfffddffssddcssdz2.ssdd5}  entail \dref{233ddxcvbbggdddddddfghhdfssdffffgcz2vv.5ghju4ss8cfg9ddsddddffff24ssdddghddfgggyddfggusdffji}. In fact,
for any  $\eta > 0$, we may employ an Ehrling-type lemma to pick $M_{2} > 0$ fulfilling
 \begin{equation}\|n(\cdot,t)-\bar{n}_0\|_{L^{\infty}(\Omega)}\leq\frac{\eta}{2 M_{1} }\|n(\cdot,t)-\bar{n}_0\|_{C^{2+\theta}(\bar{\Omega})}+ M_{2}\|n(\cdot,t)-\bar{n}_0\|_{L^{2}(\Omega)}
\label{233ddxcvbbggdddddddfghhdfgcz2vv.5ghju4ss8cfg9ddsddddffff24ssdddghddfgggyddfggusdffji}
\end{equation}
as well as
 \begin{equation}\|c(\cdot,t)-\bar{n}_0\|_{L^{\infty}(\Omega)}\leq\frac{\eta}{2 M_{1} }\|c(\cdot,t)-\bar{n}_0\|_{C^{2+\theta}(\bar{\Omega})}+ M_{2}\|c(\cdot,t)-\bar{n}_0\|_{L^{2}(\Omega)}
\label{ghhddxcvbbggdddddddfghhdfgcz2vv.5ghju4ss8cfg9ddsddddffff24ssdddghddfgggyddfggusdffji}
\end{equation}
and
 \begin{equation}\|u(\cdot,t)\|_{L^{\infty}(\Omega)}\leq\frac{\eta}{2 M_{1} }\|u(\cdot,t)\|_{C^{2+\theta}(\bar{\Omega})}+ M_{2}\|u(\cdot,t)\|_{L^{2}(\Omega)}.
\label{dllkoooopdxcvbbggdddddddfghhdfgcz2vv.5ghju4ss8cfg9ddsddddffff24ssdddghddfgggyddfggusdffji}
\end{equation}
Now due to \dref{aahhxxcdfvhhhvssssssssdsssjjdfffddffssllllddcssdz2.ssdd5}, we may choose $t_0 > \max\{1,T\}$ large enough
such that for all $t>t_0$,
 \begin{equation}\|n(\cdot,t)-\bar{n}_0\|_{L^2(\Omega)}+\|c(\cdot,t)-\bar{n}_0\|_{L^2(\Omega)}+\|u(\cdot,t)\|_{L^2(\Omega)}<\frac{\eta}{2 M_{2}},
\label{234ddxcvbbggdddddddfghhdfgcz2vv.5ghju4ss8cfg9ddsddddffff24ghddfgggyddfggusdffji}
\end{equation}
which in conjunction with \dref{233ddxcvbbggdddddddfghhdfgcz2vv.5ghju4ss8cfg9ddsddddffff24ssdddghddfgggyddfggusdffji}--\dref{dllkoooopdxcvbbggdddddddfghhdfgcz2vv.5ghju4ss8cfg9ddsddddffff24ssdddghddfgggyddfggusdffji} entails that
$$
\begin{array}{rl}\|n(\cdot,t)-\bar{n}_0\|_{L^{\infty}(\Omega)}\leq&\disp{\frac{\eta}{2 M_{1} }\|n(\cdot,t)-\bar{n}_0\|_{C^{2+\theta}(\bar{\Omega})}+ M_{2}\|n(\cdot,t)-\bar{n}_0\|_{L^{2}(\Omega)}}\\
<&\disp{\frac{\eta}{2 M_{1} } M_{1}+ M_{2}\frac{\eta}{2 M_{2} }}\\
=&\eta~~~\mbox{for all}~~ t > t _0,\\
\end{array}
$$
$$
\begin{array}{rl}\|c(\cdot,t)-\bar{n}_0\|_{L^{\infty}(\Omega)}\leq&\disp{\frac{\eta}{2 M_{1} }\|c(\cdot,t)-\bar{n}_0\|_{C^{2+\theta}(\bar{\Omega})}+ M_{2}\|c(\cdot,t)-\bar{n}_0\|_{L^{2}(\Omega)}}\\
<&\disp{\frac{\eta}{2 M_{1} } M_{1}+ M_{2}\frac{\eta}{2 M_{2} }}\\
=&\eta~~~\mbox{for all}~~ t > t _0\\
\end{array}
$$
as well as
and
$$
\begin{array}{rl}\|u(\cdot,t)\|_{L^{\infty}(\Omega)}\leq&\disp{\frac{\eta}{2 M_{1} }\|u(\cdot,t)\|_{C^{2+\theta}(\bar{\Omega})}+ M_{2}\|u(\cdot,t)\|_{L^{2}(\Omega)}}\\
<&\disp{\frac{\eta}{2 M_{1} } M_{1}+ M_{2}\frac{\eta}{2 M_{2} }}\\
=&\eta~~~\mbox{for all}~~ t > t _0,\\
\end{array}
$$
which combined with the fact that $\eta> 0$ is arbitrary yields to \dref{233ddxcvbbggdddddddfghhdfssdffffgcz2vv.5ghju4ss8cfg9ddsddddffff24ssdddghddfgggyddfggusdffji}, 
and thereby completes the proof.

\end{proof}
%
%
%
%
%
%
%
%
%
%
%
%
%
%
%
%
%
%
%
%

In order to prove Theorem \ref{thaaaeorem3}, we now only have to collect the results prepared during this section:

{\bf Proof of Theorem  \ref{thaaaeorem3}.} 


\begin{proof}
The statement is evidently implied by  Lemmas \ref{lemma45630223}--\ref{lemma4dd5630hhuujjuuyy}.
\end{proof}

{\bf Acknowledgement}:
This work is partially supported by Shandong Provincial
Science Foundation for Outstanding Youth (No. ZR2018JL005).

\end{document}